\newtheorem{theorem}{Theorem}[section]
\newtheorem*{theorem*}{Theorem}
\newtheorem{lemma}[theorem]{Lemma}
\newtheorem{proposition}[theorem]{Proposition}
\newtheorem{corollary}[theorem]{Corollary}
\newtheorem*{conjecture*}{Conjecture}
\newtheorem*{question*}{Question}
\theoremstyle{remark}
\newtheorem{remark}[theorem]{Remark}
\theoremstyle{definition}
\newtheorem{definition}[theorem]{Definition}
\newcommand{\ie}{{\em i.e.~}\ }
\newcommand{\opname}[1]{\operatorname{\mathsf{#1}}}
\renewcommand{\mod}{\opname{mod}\nolimits}
\newcommand{\proj}{\opname{proj}\nolimits}
\newcommand{\pd}{\opname{pd}\nolimits}
\newcommand{\thick}{\opname{thick}\nolimits}
\newcommand{\Hom}{\opname{Hom}}
\newcommand{\End}{\opname{End}}
\newcommand{\Ext}{\opname{Ext}}
\newcommand{\ca}{{\mathcal A}}
\newcommand{\cb}{{\mathcal B}}
\newcommand{\cd}{{\mathcal D}}
\newcommand{\cf}{{\mathcal F}}
\newcommand{\cm}{{\mathcal M}}
\newcommand{\cq}{{\mathcal Q}}
\newcommand{\ct}{{\mathcal T}}
\newcommand{\cx}{{\mathcal X}}
\newcommand{\cy}{{\mathcal Y}}
\numberwithin{equation}{section}
\begin{document}

\title[silted algebras of hereditary algebras]{Classification of silted algebras for two quivers of Dynkin type $\mathbb{A}_{n}$}

\author{Zongzhen Xie, Dong Yang and Houjun Zhang}

\address{Zongzhen Xie, Department of Mathematics and Computer Science, School of Biomedical Engineering and Informatics, Nanjing Medical University, Nanjing, 211166, P. R. China.
}

\email{zzhx@njmu.edu.cn}
\address{Dong Yang, Department of Mathematics, Nanjing University, Nanjing 210093, P. R. China
}

\email{yangdong@nju.edu.cn}
\address{Houjun Zhang, School of Science, Nanjing University of Posts and Telecommunications, Nanjing 210023, P. R. China
}

\email{zhanghoujun@njupt.edu.cn}

\begin{abstract}
In this paper, we give a complete classification of silted algebras for the quiver $\overrightarrow{\mathbb{A}}_{n}$ of type $\mathbb{A}_{n}$ with linear orientation and for the quiver obtained from $\overrightarrow{\mathbb{A}}_{n}$ by reversing the arrow at the unique source. Based on the classification, we also compute the number of silted algebras for these two quivers.\\
{MSC 2020:} 16G99, 16E35, 16E45.\\
{Key words:} 2-term silting complex, silted algebra, rooted quiver with relation, tilted algebra, shod algebra.
\end{abstract}

\maketitle


\section{Introduction}\label{s:introduction}
\medskip

{\bf Silted algebras} were introduced by Buan and Zhou \cite{BuanZhou16} as endomorphisms algebras of 2-term silting complexes over finite-dimensional hereditary algebras. These algebras enjoy a very nice homological property: they are either tilted algebras (\ie endomorphism algebras of tilting modules over hereditary algebras) or strictly shod algebras (\ie shod algebras of global dimension $3$), see \cite{BuanZhou16}. Their module categories are closely related to those over hereditary algebras. Precisely, let $A$ be finite-dimensional hereditary algebra and $B$ be the endomorphism algebra of a 2-term silting complex over $A$ (we call $B$ a \emph{silted algebra} of type $A$). Then there is a torsion pair $(\ct,\cf)$ of the category $\mod A$ of finite-dimensional right $A$-modules and a split torsion pair $(\cx,\cy)$ of $\mod B$ such that $\ct$ is equivalent to $\cy$ and $\cf$ is equivalent to $\cx$, just as in classical tilting theory, see \cite{BuanZhou16,BuanZhou16b} (also \cite{XieYangZhang23}).

If $A$ is a finite-dimensional hereditary algebra of finite representation type, then there are only finitely many isomorphism classes of basic 2-term silting complexes, and this number is given in \cite{ObaidNaumanFakiehRingel14}. It is of interest to have a classification (up to isomorphism) of basic silted algebras of type $A$ as well as the strictly shod algebras among them and calculate their numbers. 
In this paper we focus on the path algebras $\Lambda_n$ and $\Gamma_n$ of {\bf two quivers of type $\mathbb{A}_n$}:
\[
\begin{xy}
(-12,0)*+{1}="1",
(0,0)*+{2}="2",
(12,0)*+{\cdots}="3",
(28,0)*+{n-1}="4",
(42,0)*+{n}="5",
\ar"2";"1", \ar"3";"2",\ar"4";"3",\ar"5";"4",
\end{xy}, \hspace{15pt}
\begin{xy}
(-12,0)*+{1}="1",
(0,0)*+{2}="2",
(12,0)*+{\cdots}="3",
(28,0)*+{n-1}="4",
(42,0)*+{n}="5",
\ar"2";"1", \ar"3";"2",\ar"4";"3",\ar"4";"5",
\end{xy}
\]
We will study $\Lambda_n$ and $\Gamma_n$ in Section~\ref{s:silted-algebra-linear-orientation} and Section~\ref{s:silted-algebra-mutated-orientation}, respectively. Precisely, we will classify the basic silted algebras and give a formula for the number of these silted algebras. The {\bf main results} are Theorem~\ref{thm:the-number-of-silted-algebras-of-Lambda_n} for $\Lambda_n$, and Theorems~\ref{thm:silted-algebras-of-Gamman} and~\ref{thm:number-of-silted-algebras-of-Gamman} for $\Gamma_n$.
As a consequence of this classification, we see that among the silted algebras of type $\Lambda_n$ and of type $\Gamma_n$ there are no strictly shod algebras. In \cite{ZhangLiu22} it is shown that this holds true for the path algebra of any quiver of type $\mathbb{A}$ with the help of geometric models. In Section~\ref{s:no-strictly-shod-algebra-of-type-A} we give an alternative proof for this result. 
In \cite{Zhang24}, the third author gives a classification of silted algebras of type the path algebras of two quivers of type $\mathbb{D}_n$ as well as a classification of the strictly shod algebras among the silted algebras.

The structure of the paper is as follows. In Section~\ref{s:prelininary}, we give the definitions and basic properties of tilted algebras and silted algebras. In Section~\ref{s:rooted-quivers-with-relation}, we study full connected rooted subquivers with relation of the genealogical tree, in particular, we introduce an operation on them to produce certain new rooted quivers with relation, which appear in the classification of silted algebras of type $\Gamma_n$. This idea of using rooted quivers with relation may also help to classify silted algebras of general type $\mathbb{A}$ and even of type $\mathbb{D}$. In Section~\ref{s:silted-algebra-linear-orientation}, we study 2-term silting complexes over $\Lambda_n$ and silted algebras of type $\Lambda_n$ and prove Theorem~\ref{thm:the-number-of-silted-algebras-of-Lambda_n}. In Section~\ref{s:silted-algebra-mutated-orientation}, we study 2-term silting complexes over $\Gamma_n$ and silted algebras of type $\Gamma_n$ and prove Theorems~\ref{thm:silted-algebras-of-Gamman} and~\ref{thm:number-of-silted-algebras-of-Gamman}. In Section~\ref{s:no-strictly-shod-algebra-of-type-A}, we give an alternative proof for the fact that there are no strictly shod algebras among silted algebras of type $\mathbb{A}$.

\medskip
\noindent\emph{Notations and conventions.} Throughout this paper $k$ is a field and all algebras are finite-dimensional $k$-algebras. For an algebra $A$ we denote by $\mod A$ the category of finite-dimensional (right) $A$-modules, by $K^{b}(\proj A)$ the bounded homotopy category of finitely generated projective $A$-modules, and by $\tau$ the Auslander--Reiten translation both in $\mod A$ and in $K^b(\proj A)$. For an $A$-module $M$, we denote by $|M|$ the number of non-isomorphic indecomposable direct summands of $M$. For an object $M$ in a triangulated category $\ct$, we denote by $\thick(M)$ the smallest triangulated subcategory of $\ct$ which contains $M$ and which is closed under taking direct summands.

For a finite set $X$, we denote by $|X|$ its cardinality. For a rational number $x$, we denote by $[x]$ the largest integer smaller than or equal to $x$.


\section{Preliminaries on tilted and silted algebras}
\label{s:prelininary}

In this section, we recall some relevant definitions and properties concerning symmetric product, tilted algebras and silted algebras. We refer the reader to \cite{AdachiIyamaReiten14,AiharaIyama12,AssemSimsonSkowronski06,BuanZhou16,KellerVossieck88}.

\subsection{Symmetric product}
Let $X$ and $Y$ be two finite sets. Denote by $X\times_s Y$ the set of all non-ordered pairs $\{x,y\}$, where $x\in X$ and $y\in Y$. 
\begin{lemma}
\label{lem:symmetric-product}
\begin{itemize}
\item[(a)] $X\times_s Y=Y\times_s X$.
\item[(b)] If $X$ and $Y$ have no intersection, then $X\times_s Y\cong X\times Y$.
\item[(c)] $|X\times_s X|=\frac{|X|(|X|+1)}{2}$.
\item[(d)] If $X'$ is a subset of $X$, then $|X'\times_s X|=|X'|\times |X|-\frac{|X'|(|X'|-1)}{2}$.
\item[(e)] Assume that $X$ and $Y$ have no intersection. If $X'$ is a subset of $X$ and $Y'$ is a subset of $Y$, then 
$|(X'\times_s Y)\cup (Y'\times_s X)|=|X'|\times |Y|+|Y'|\times |X|-|X'|\times |Y'|$.
\end{itemize}
\end{lemma}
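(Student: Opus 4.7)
The plan is to verify each part directly from the definition $X \times_s Y = \{\{x,y\} : x \in X,\ y \in Y\}$, where $\{x,y\}$ is treated as an unordered pair that may have $x = y$. Part (a) is immediate, since the defining property is symmetric in $X$ and $Y$: a pair $\{x,y\}$ with $x \in X$, $y \in Y$ is the same pair viewed with $y \in Y$, $x \in X$. For (b), when $X \cap Y = \emptyset$, each pair in $X \times_s Y$ has a unique labeling of which entry lies in $X$ and which in $Y$, so the map $X \times Y \to X \times_s Y$, $(x,y) \mapsto \{x,y\}$, is a bijection. For (c), I would split $X \times_s X$ into diagonal pairs $\{x,x\}$ and strict pairs $\{x,y\}$ with $x \neq y$; these contribute $|X|$ and $\binom{|X|}{2}$ respectively, and the sum equals $\frac{|X|(|X|+1)}{2}$.

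Part (d) is the main computation. My plan is to use the surjection $\varphi \colon X' \times X \to X' \times_s X$, $(a,b) \mapsto \{a,b\}$, whose source has cardinality $|X'| \cdot |X|$. A fiber $\varphi^{-1}(\{a,b\})$ has size $1$ except when $a \neq b$ and both elements lie in $X'$, in which case it has size $2$ (the two orderings $(a,b)$ and $(b,a)$). The set of such doubly-covered pairs is exactly the set of $2$-element subsets of $X'$, which has cardinality $\binom{|X'|}{2}$. Hence $|X'|\cdot |X| = |X' \times_s X| + \binom{|X'|}{2}$, giving $|X' \times_s X| = |X'|\cdot |X| - \frac{|X'|(|X'|-1)}{2}$.

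For (e), I would use inclusion–exclusion. Under the disjointness assumption, (b) applied twice gives $|X' \times_s Y| = |X'|\cdot |Y|$ and $|Y' \times_s X| = |Y'|\cdot |X|$. To identify the intersection, I would observe that a pair $\{a,b\}$ in $(X' \times_s Y) \cap (Y' \times_s X)$ must contain one element of $X$ and one of $Y$ (by disjointness, this labeling is forced from either description); the $X$-entry must lie in $X'$ and the $Y$-entry must lie in $Y'$. Hence the intersection equals $X' \times_s Y'$, which has cardinality $|X'|\cdot|Y'|$ by (b). Inclusion–exclusion then yields the stated identity.

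The only mildly delicate point is the overcounting analysis in (d); the remaining parts are essentially unpacking the definition together with a disjointness argument for the intersection in (e).
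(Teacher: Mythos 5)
Your proof is correct, but in parts (d) and (e) it takes a genuinely different route from the paper. For (d), the paper decomposes $X'\times_s X=(X'\times_s X')\sqcup(X'\times_s(X-X'))$ and then applies (c) and (b) to the two pieces, whereas you count fibers of the surjection $X'\times X\to X'\times_s X$, $(a,b)\mapsto\{a,b\}$, noting that exactly the $2$-element subsets of $X'$ are hit twice; your fiber analysis (size $2$ precisely when $a\neq b$ and both lie in $X'$, since $(b,a)\in X'\times X$ forces $b\in X'$) is accurate, so the double-counting identity $|X'|\,|X|=|X'\times_s X|+\binom{|X'|}{2}$ is justified. For (e), the paper again avoids inclusion--exclusion by splitting the union into the three disjoint pieces $X'\times_s(Y-Y')$, $X'\times_s Y'$, $Y'\times_s(X-X')$, while you apply inclusion--exclusion and identify the intersection as $X'\times_s Y'$; your key observation that disjointness of $X$ and $Y$ forces the labeling of each pair, so membership in both sets pins the $X$-entry in $X'$ and the $Y$-entry in $Y'$, is exactly what makes this work. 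Your treatment of (c) (diagonal versus strict pairs) is also a slight variant of the paper's total-order argument. The paper's approach is more uniform in that everything reduces to disjoint decompositions plus (b) and (c); yours is equally elementary and arguably makes the overcount in (d) and the intersection in (e) more explicit. No gaps.
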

\begin{proof} (a) and (b) are clear.

(c) Let $\leq$ be any total order on $X$. Then $X\times_s X$ is in bijection with the set of ordered pairs $(x_1,x_2)$ with $x_1\leq x_2$. The cardinality of the latter set is $\frac{|X|(|X|+1)}{2}$.

(d) We have
\[
X'\times_s X=(X'\times_s X')\sqcup (X'\times_s (X-X'))\cong (X'\times_s X')\sqcup (X'\times (X-X')),
\] 
so
\begin{align*}
|X'\times_s X|&=|X'\times_s X'|+|X'\times (X-X')|\\
&=\frac{|X'|(|X'|+1)}{2}+|X'|(|X|-|X'|)\\
&=|X'|\times |X|-\frac{|X'|(|X'|-1)}{2}
\end{align*}

(e) We have
\begin{align*}
(X'\times_s Y)\cup (Y'\times_s X)&=(X'\times_s(Y'\sqcup (Y-Y')))\cup (Y'\times_s (X'\sqcup(X-X')))\\
&=((X'\times_s Y')\sqcup(X'\times_s (Y-Y')))\cup ((Y'\times_s X')\sqcup (Y'\times_s (X-X')))\\
&=(X'\times_s Y')\sqcup ((X'\times_s (Y-Y'))\cup(Y'\times_s (X-X')))\\
&=(X'\times_s (Y-Y'))\sqcup (X'\times_s Y')\sqcup (Y'\times_s (X-X'))\\
&\cong(X'\times (Y-Y'))\sqcup (X'\times Y')\sqcup (Y'\times (X-X')).
\end{align*}
The last equality and the isomorphism hold because $X$ and $Y$ have no intersection. So
\begin{align*}
|(X'\times_s Y)\cup (Y'\times_s X)|&=|X'\times (Y-Y')|+ |X'\times Y'|+|Y'\times (X-X')|\\
&=|X'|(|Y|-|Y'|)+|X'|\times |Y'|+|Y'|(|X|-|X'|)\\
&=|X'|\times |Y|+|Y'|\times |X|-|X'|\times |Y'|.\qedhere
\end{align*}
\end{proof}

\subsection{Tilted algebras}

Let $A$ be an algebra. 
\begin{definition}
A module $T\in \mod A$ is called a {\it partial tilting module}, if it satisfies the following two conditions:
 \begin{itemize}
\item[(1)] $\pd_A T \leq 1 $.
\item[(2)] $\Ext _{A}^{1} (T,T) =0$.\
\end{itemize}
It is called a {\it tilting module} if in addition it satisfies the following condition:
 \begin{itemize}
\item[(3)] $|T|=|A|$.
\end{itemize}
\end{definition}

As a trivial example, the free $A$-module of rank $1$ is a tilting module.  If $S$ is a projective non-injective simple $A$-module, then $T=A/S\oplus \tau^{-1}S$ is a tilting module, called an \emph{APR-tilting module}. As a generalisation of tilting modules, Adachi, Iyama and Reiten \cite{AdachiIyamaReiten14} introduced $\tau$-tilting modules.

\begin{definition}
Let $T$ be an $A$-module.
\begin{itemize}
\item[(1)] $T$ is called {\it $\tau$-rigid} if $\Hom_{A}(T,\tau T)=0$, and
$T$ is called {\it $\tau$-tilting} if $T$ is $\tau$-rigid and $|T|=|A|$.
\item[(2)] $T$ is called {\it support $\tau$-tilting} if there exists an idempotent
$e$ of $A$ such that $T$ is a $\tau$-tilting $A/ \langle e \rangle$-module.
\end{itemize}
\end{definition}

\begin{definition}
Assume that $A$ is hereditary. An algebra $B$ is said to be {\it tilted} of type $A$ if there exists a tilting module $T$ over $A$ such that $B\cong\End_{A}(T)$. When $A$ is the path algebra of a quiver whose underlying graph is $\Delta$, we also say that $A$ is tilted of type $\Delta$.
\end{definition}

Assume that $A$ is hereditary.  Let $\ca_t(A)$ denote the set of isoclasses of basic tilted algebras of type $A$. If $A$ is connected, then any tilted algebra of type $A$ is also connected. If $A=A_1\times A_2$, then $\ca_t(A)=\ca_t(A_1)\times_s\ca_t(A_2)$. Moreover, if further $|A_1|\neq |A_2|$, then $\ca_t(A_1)$ and $\ca_t(A_2)$ cannot have intersection, and $\ca_t(A)\cong\ca_t(A_1)\times\ca_t(A_2)$.


\subsection{Silted algebras}

Let $A$ be an algebra. Recall that $K^b(\proj A)$ denotes the bounded homotopy category of finitely generated projective $A$-modules. Let $K^{[-1,0]}(\proj A)$ denote the full subcategory of $K^b(\proj A)$ consisting of complexes which is concentrated in degrees $-1$ and $0$. If $A=kQ$ is the path algebra of a Dynkin quiver $Q$, then the AR-quiver of $K^b(\proj A)$ is the repetitive quiver $\mathbb{Z}Q$. In this case, the AR-quiver of $\mod A$ is a full translation subquiver of the AR-quiver of $K^b(\proj A)$ and the indecomposable objects of $K^b(\proj A)$ are precisely shifts of the indecomposable $A$-modules. Here and throughout this paper we consider an $A$-module as an object of $K^b(\proj A)$ by identifying it with its projective resolution (minimal projective solution if necessary).

\begin{definition}\label{def:silting-objects}
Let $M$ be a complex in $K^{b}(\proj A)$.
 \begin{itemize}
\item[(1)] $M$ is called {\it presilting} if $\Hom_{K^{b}(\proj A)}(M,M[i])=0$ for any $i>0$.
\item[(2)] $M$ is called {\it silting} if it is presilting and $K^{b}(\proj A)=\thick(M)$.
\item[(3)] $M$ is called \emph{tilting} if $\Hom_{K^b(\proj A)}(M,M[i])=0$ for any $i\neq 0$ and $K^b(\proj A)=\thick(M)$.
\item[(4)] $M$ is called {\it 2-term} if it has non-zero terms only in degrees $-1$ and $0$, \ie $M\in K^{[-1,0]}(\proj A)$.
\end{itemize}
\end{definition}

A 2-term presilting complex $M$ in $K^b(\proj A)$ is silting if and only if $|M|=|A|$, see for example \cite[Proposition 3.14]{BruestleYang13}. Tilting modules are 2-term tilting complexes. More generally, 2-term silting complexes and support $\tau$-tilting modules are closely related.

\begin{theorem}[{\cite[Theorem 3.2]{AdachiIyamaReiten14}}]
\label{thm:correspondence-between-silting-and-support-tau-tilting}
Taking the $0$-th cohomology defines a bijection from the set of isomorphism classes of 2-term silting complexes over $A$ to the set of isomorphism classes of support $\tau$-tilting $A$-modules.
\end{theorem}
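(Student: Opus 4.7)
The plan is to build mutual inverses explicitly, with a homological reformulation of $\tau$-rigidity as the core technical step. Given a 2-term complex $M=(P^{-1}\xrightarrow{d}P^{0})$ in $K^{[-1,0]}(\proj A)$, decompose it as $M=M'\oplus (eA\to 0)$, where the second summand collects the indecomposable summands of $M$ whose degree-$0$ part is zero, so that $e$ is an idempotent of $A$. Set $T:=H^{0}(M)=\cok(d)$. I would define $\Phi\colon M\mapsto T$ and show that $T$ is annihilated by $\langle e\rangle$, hence descends to a module over $A/\langle e\rangle$.

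The core step is the homological identity
\[
\Hom_{K^b(\proj A)}(M_1,M_2[1])\cong \Hom_{A}(H^0(M_1),\tau H^0(M_2))
\]
up to contributions from projective summands, which I would derive by applying $\Hom_{K^b(\proj A)}(M_1,-)$ to the triangle determined by the two-term presentation of $M_2$ and using the classical description of $\tau$ via the transpose functor applied to a minimal projective presentation. From this, $\Hom_{K^b(\proj A)}(M,M[1])=0$ translates into $\Hom_{A/\langle e\rangle}(T,\tau T)=0$, i.e.\ $\tau$-rigidity of $T$ as an $A/\langle e\rangle$-module. Combined with the silting criterion $|M|=|A|$ (which splits as $|M'|=|A/\langle e\rangle|$ plus $|eA|$ extra shifted summands), this yields that $\Phi(M)$ is support $\tau$-tilting.

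For the inverse $\Psi$, given a support $\tau$-tilting $T$ that is $\tau$-tilting over $A/\langle e\rangle$, I would take a minimal projective presentation $P^{-1}\to P^{0}\to T\to 0$ in $\mod A$ and form $M_T=(P^{-1}\to P^{0})\oplus(eA\to 0)$. A direct check gives $H^{0}(M_T)=T$, so $\Phi\Psi=\id$. Conversely, for a 2-term silting $M$ one argues that it has no direct summand of the form $(P\xrightarrow{\id}P)$ (such a summand contradicts the minimality hidden in the silting decomposition, or can be removed using $\Hom(M,M[1])=0$), so its degree-$0$ part already realises a minimal projective cover of $H^0(M)$, yielding $\Psi\Phi(M)\cong M$.

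The main obstacle is making the homological identity precise in the presence of projective summands of $T$ and of the support idempotent $e$: one has to track carefully how $\tau$ behaves for modules with projective summands (where $\tau T$ is only partially defined) and how $e$ produces extra degree-$-1$ summands of $M$ that do not affect $H^0$. Once this bookkeeping is settled, rigidity together with the counting identity $|M|=|A|$ matches cleanly with the definition of support $\tau$-tilting, and the bijection follows.
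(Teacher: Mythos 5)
This statement is not proved in the paper at all: it is quoted, with citation, from Adachi--Iyama--Reiten \cite[Theorem 3.2]{AdachiIyamaReiten14}, so there is no in-paper argument to compare yours with. What you have written is essentially a reconstruction of the original AIR proof: split off the summands $eA[1]$ with zero degree-$0$ term, pass to $H^0$, translate presilting into $\tau$-rigidity via the Auslander--Smal{\o}-type homological lemma for minimal presentations, and match the counting condition $|M|=|A|$ with the definition of a support $\tau$-tilting pair. Two points in your sketch need more care than your wording suggests, though neither is fatal. First, the ``homological identity'' is not an isomorphism $\Hom_{K^b(\proj A)}(M_1,M_2[1])\cong\Hom_A(H^0M_1,\tau H^0M_2)$; the correct statement, valid when the degree-wise presentation of $H^0M_1$ is minimal, is $\Hom_{K^b(\proj A)}(M_1,M_2[1])\cong D\Hom_A(H^0M_2,\tau H^0M_1)$, i.e.\ the two arguments are transposed and a duality appears. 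Applied diagonally to $M_1=M_2$ this makes no difference to the vanishing statement you actually use, but as an isomorphism your formula is false in general, and for the mixed summands one also needs $\Hom_{K^b(\proj A)}(eA[1],M'[1])\cong\Hom_A(eA,H^0M')$, which is exactly where the support condition $H^0(M)\cdot e=0$ enters. Second, for injectivity of $H^0$ you need two genuine inputs that you only gesture at: a $2$-term silting complex has no summand of the form $P\xrightarrow{\;\mathrm{id}\;}P$, so its degree-$0$ differential is a minimal presentation plus the $eA[1]$ part, and the idempotent $e$ (equivalently the projective completing $T$ to a support $\tau$-tilting pair) is uniquely determined by $T$; both are separate lemmas in \cite{AdachiIyamaReiten14}. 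With those details filled in, your outline is the standard proof of the cited theorem.
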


In \cite{BuanZhou16}, Buan and Zhou introduced silted algebras.

\begin{definition}\label{def:silted-algebras}
Assume that $A$ is hereditary. An algebra $B$ is said to be \emph{silted} of type $A$ if there exists a 2-term silting complex $M$ over $A$ such that $B\cong\End_{K^{b}(\proj A)} (M)$.
\end{definition}

Some examples of silted algebras of path algebras of Dynkin quivers are given in \cite{Xing24}.

\begin{theorem}[{\cite[Theorem 0.2]{BuanZhou16}}]
\label{thm: silted-algebras-to-tilted-and-shod-algebras}
Let $B$ be a connected algebra. Then $B$ is a silted algebra if and only if $B$ is either a
tilted algebra or a strictly shod algebra, that is, $B$ has global dimension 3 and any $B$-module has projective or injective dimension no greater than 1. 
\end{theorem}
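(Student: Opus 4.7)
The plan is to exploit the bridge between 2-term silting complexes over a hereditary algebra and torsion pairs in its module category, the essentials of which are recorded in Theorem~\ref{thm:correspondence-between-silting-and-support-tau-tilting} and the introductory discussion of this paper.

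For the ``only if'' direction, suppose $B=\End_{K^b(\proj A)}(M)$ for a 2-term silting complex $M$ over a connected hereditary algebra $A$. Decompose $M\cong M_0\oplus P[1]$ with $M_0\in\mod A$ and $P\in\proj A$, and let $(\ct,\cf)$ denote the torsion pair on $\mod A$ attached to $H^0(M)$ via Theorem~\ref{thm:correspondence-between-silting-and-support-tau-tilting}. The functor $\RHom_A(M,-)$ induces a triangle equivalence $\cd^b(\mod A)\simeq\cd^b(\mod B)$, and the 2-term assumption translates the classical torsion theory $(\ct,\cf)$ into a split torsion pair $(\cx,\cy)$ on $\mod B$ with $\cy\simeq\ct$ and $\cx\simeq\cf[1]$. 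Since $A$ is hereditary, every object of $\ct$ and of $\cf$ has $A$-projective dimension at most $1$. Transporting a length-one projective resolution through the equivalence gives, for each $Y\in\cy$, a two-term projective resolution over $B$, so $\pd_B Y\le 1$; dually each $X\in\cx$ has $B$-injective dimension at most $1$. Every $B$-module is an extension of an object of $\cx$ by one of $\cy$, so $\gldim B\le 3$. Hence $B$ is shod, and is tilted exactly when $P=0$ (so that $M$ is a genuine tilting module), otherwise strictly shod.

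For the ``if'' direction, any tilting module $T$ over a hereditary algebra, regarded as a complex concentrated in degree $0$, is a 2-term silting complex, so every tilted algebra is silted. The substantive case is a strictly shod algebra $B$: one must reverse the previous construction. The shod condition furnishes a split torsion pair $(\cx,\cy)$ on $\mod B$ with $\cx$ the class of modules of injective dimension $\le 1$ and $\cy$ that of projective dimension $\le 1$. An inverse HRS tilt of $\mod B$ inside $\cd^b(\mod B)$ along this torsion pair produces a new heart $\ch$; one shows that $\ch$ is equivalent to $\mod A$ for a finite-dimensional hereditary algebra $A$. The image of $B\in\mod B$ under the induced equivalence $\cd^b(\mod B)\simeq\cd^b(\mod A)\simeq K^b(\proj A)$ is then a 2-term complex $M$, and one verifies directly that $M$ is silting with $\End_{K^b(\proj A)}(M)\cong B$.

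The main obstacle is the strictly-shod half of the ``if'' direction: verifying that the candidate heart $\ch$ obtained from the inverse HRS tilt is in fact $\mod A$ for some hereditary $A$, \ie that $\ch$ has enough projectives, is the module category of a finite-dimensional algebra, and has no self-extensions of degree $\ge 2$. All three requirements hinge on the interplay between the global dimension bound $\gldim B\le 3$ and the splitting of $(\cx,\cy)$, which together force the vanishing of higher $\Ext$ groups in $\ch$.
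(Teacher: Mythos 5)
First, note that the paper does not prove this statement at all: it is quoted verbatim from Buan--Zhou \cite{BuanZhou16} (their Theorem 0.2), so there is no internal proof to compare your argument with; your proposal has to be judged as a proof of the cited theorem itself, and there it has a genuine flaw.

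The flaw is in the ``only if'' direction, in the sentence asserting that $B$ ``is tilted exactly when $P=0$ (so that $M$ is a genuine tilting module), otherwise strictly shod.'' This dichotomy is false, and it is contradicted by the very paper you are working in: Corollary~\ref{cor:endomorphism-algebra-of-2-term-tilting-is-tilted} shows that any 2-term \emph{tilting} complex (typically with $P\neq 0$) has a tilted endomorphism algebra, and Theorems~\ref{thm:the-number-of-silted-algebras-of-Lambda_n} and~\ref{thm:silted-algebras-of-Gamman} show that \emph{every} silted algebra of type $\Lambda_n$ or $\Gamma_n$ is tilted, even though most of the 2-term silting complexes involved are not modules (e.g.\ the shifted tilting modules $\tau^{-1}T$ and the mixed complexes $M'\oplus M''$ of Proposition~\ref{prop:2-term-silting-complexes-of-Lambda_n}). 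The actual content of this half of Buan--Zhou's theorem is precisely the step you skip: after establishing that $B$ is shod (your torsion-pair argument for $\pd_B\cy\le 1$, $\opname{id}_B\cx\le 1$ is fine in outline), one must show that a connected silted algebra whose global dimension is at most $2$ is genuinely \emph{tilted}, not merely shod or quasi-tilted; this requires a further argument (in \cite{BuanZhou16} it rests on their silting theorem together with Happel--Reiten--Smal{\o} type results on quasi-tilted algebras), and no criterion in terms of whether $P=0$ can replace it. Two smaller points: the inference ``every $B$-module is an extension of an object of $\cx$ by one of $\cy$, so $\gldim B\le 3$'' needs the standard syzygy argument for shod algebras rather than being immediate, and in the ``if'' direction the classes $\{\opname{id}\le 1\}$ and $\{\pd\le 1\}$ overlap, so they do not literally form a torsion pair --- one must use the left/right parts in the sense of Coelho--Lanzilotta (or Happel--Reiten--Smal{\o}) --- and the key verification that the inverse HRS tilt yields $\mod A$ for a finite-dimensional hereditary $A$ is exactly the hard part, which your proposal acknowledges but does not carry out.
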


The following slightly extended version of \cite[Corollary 3.6]{BuanZhou16} is an interesting corollary of Theorem~\ref{thm: silted-algebras-to-tilted-and-shod-algebras}.

\begin{corollary}
\label{cor:endomorphism-algebra-of-2-term-tilting-is-tilted}
Assume that $A$ is hereditary. Let $T$ be a 2-term tilting complex and $B=\End_{K^b(\proj A)}(T)$. Then $B$ is a tilted algebra of type $A'$ for some hereditary algebra $A'$ which is derived equivalent to $A$.
\end{corollary}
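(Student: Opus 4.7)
The idea is to apply the silted-algebra dichotomy of Theorem~\ref{thm: silted-algebras-to-tilted-and-shod-algebras} to $B$, rule out the strictly shod possibility by using that $T$ is \emph{tilting} and not merely silting, and then identify the type via Happel's derived Morita theorem.

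First I observe that since $T$ is a $2$-term tilting complex it is in particular a $2$-term silting complex, so by Definition~\ref{def:silted-algebras} the algebra $B$ is silted of type $A$; Theorem~\ref{thm: silted-algebras-to-tilted-and-shod-algebras} then gives that each connected component of $B$ is either tilted or strictly shod, with the latter forcing $\gldim B = 3$. Next I exploit the extra vanishing $\Hom_{K^b(\proj A)}(T,T[i]) = 0$ for all $i \neq 0$: Rickard's derived Morita theorem yields a triangle equivalence $\Phi \colon \cd^b(\mod A) \iso \cd^b(\mod B)$ sending $T$ to $B$. Because $T$ is concentrated in degrees $-1$ and $0$, the preimage $\Phi^{-1}(\mod B)$ is the tilt of the hereditary heart $\mod A$ along the torsion pair induced by $T$, and a standard computation bounds the global dimension of such a tilt by $2$. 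This yields $\gldim B \leq 2$, ruling out strictly shod, so $B$ is tilted.

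Finally, $B$ being tilted means by definition that $B \cong \End_{A'}(T')$ for some hereditary algebra $A'$ and a tilting $A'$-module $T'$; Happel's tilting theorem produces a triangle equivalence $\cd^b(\mod A') \iso \cd^b(\mod B)$, and composing with $\Phi^{-1}$ gives $\cd^b(\mod A') \iso \cd^b(\mod A)$, exhibiting $A'$ as derived equivalent to $A$. I expect the main obstacle to be the global-dimension bound: one must carefully verify, using the explicit description of $\Phi^{-1}(\mod B)$ and long exact sequences in $\cd^b(\mod A)$, that tilting a hereditary heart along a torsion pair cannot raise the global dimension above $2$. Once this is in hand, the identification of the type is a formal composition of derived equivalences.
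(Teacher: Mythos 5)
Your proposal is correct and follows essentially the same route as the paper's proof: both reduce the statement to the bound $\gldim B\leq 2$, then invoke Theorem~\ref{thm: silted-algebras-to-tilted-and-shod-algebras} to exclude the strictly shod case (which would force global dimension $3$), and obtain the derived equivalence of $A$, $B$ and $A'$ by composing the equivalences coming from the tilting complex and the tilting module. The only difference is that the paper simply cites \cite[Section 12.5]{GabrielRoiter92} (see also \cite[Proposition 3.5]{BuanZhou18}) for the global dimension bound, whereas you sketch a direct verification via the HRS-tilt of the hereditary heart; that computation is a valid substitute.
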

\begin{proof}
By \cite[Section 12.5]{GabrielRoiter92} (see also \cite[Proposition 3.5]{BuanZhou18}), the global dimensional of $B$ is at most $2$. Since $B$ is a silted algebra, it follows by Theorem~\ref{thm: silted-algebras-to-tilted-and-shod-algebras} that $B$ is tilted, say of type $A'$. Then $A$, $B$ and $A'$ are derived equivalent.
\end{proof}



\section{Rooted quivers with relation}
\label{s:rooted-quivers-with-relation}

In this section we study full connected rooted subquivers with relation of the genealogical tree, which, by the work \cite[(4.1)]{HappelRingel81} of Happel and Ringel, is closely related to tilting modules over the path algebra of the quiver of type $\mathbb{A}_n$ with linear oritentation. The combinatorics of these rooted quivers with relation itself is also very interesting.
\medskip

A \emph{rooted quiver with relation} is a quiver with relation together with a vertex of the quiver, which is called the \emph{root}. By the path algebra of a rooted quiver with relation we mean the path algebra of the underlying quiver with relation, that is, the quotient of the path algebra of the underlying quiver modulo the ideal generated by the relations. A \emph{rooted subquiver with relation} is a subquiver with relation which contains the root, and it is \emph{full} if its subquiver is a full subquiver and its relations consists of all relations involving the subquiver. 
One interesting example is
the following ``genealogical" tree with $\bullet$ as the root
$$\begin{xy}
(0,0) *+{\bullet}="1",
(-30,-14) *+{\circ}="2",
(30,-14) *+{\circ}="3",
(-45,-28) *+{\circ}="4",
(-15,-28) *+{\circ}="5",
(15,-28) *+{\circ}="6",
(45,-28) *+{\circ}="7",
(-55,-42) *+{\circ}="8",
(-35,-42) *+{\circ}="9",
(-25,-42) *+{\circ}="10",
(-5,-42) *+{\circ}="11",
(5,-42) *+{\circ}="12",
(25,-42) *+{\circ}="13",
(35,-42) *+{\circ}="14",
(55,-42) *+{\circ}="15",
(-45,-50) *+{\vdots}="17",
(-15,-50) *+{\vdots}="18",
(15,-50) *+{\vdots}="19",
(45,-50) *+{\vdots}="20",
\ar^{\beta}"2";"1",\ar^{\alpha}"1";"3",\ar^{\beta}"4";"2",\ar^{\alpha}"2";"5",\ar^{\alpha}"3";"7",\ar^{\beta}"6";"3",\ar^{\beta}"8";"4",\ar^{\alpha}"4";"9",\ar^{\beta}"10";"5",\ar^{\alpha}"5";"11",\ar^{\beta}"12";"6",
\ar^{\alpha}"6";"13",\ar^{\beta}"14";"7",\ar^{\alpha}"7";"15",\end{xy}$$
with all possible relations $\alpha\beta=0$. We are interested in its rooted subquivers with relation.

\smallskip
Let $n$ be a positive integer. 
\subsection{Full connected rooted subquivers with relation of the genealogical tree}
Put 
\begin{align*}
\cq(n)&=\{\text{full connected rooted subquivers with relation of the genealogical tree}\\
&\hspace{20pt} \text{ with $n$ vertices}\}.
\end{align*}
Notice that elements of $\cq(n)$ are pairwise non-isomorphic as rooted quivers with relation, but they can be isomorphic as quivers with relation. Moreover, the path algebras of two elements of $\cq(n)$ are isomorphic if and only if these two elements are isomorphic as quivers with relation. The following is an easy observation.

\begin{lemma}
\label{lem:vertices-lying-on-a-relation}
Let $R\in\cq(n)$.
\begin{itemize}
\item[(a)] The root of $R$ is a source with one neighbour, or a sink with one neighbour, or lies on a relation and has two neighbours.
\item[(b)] For any full connected subquiver with relation of $R$, there exists a vertex such that with this vertex as the root it belongs to $\cq(m)$, where $m$ is the number of vertices.
\item[(c)] A vertex of $R$ lies on a relation if and only if it has three neighbours or it is the root and has two neighbours.
\end{itemize}
\end{lemma}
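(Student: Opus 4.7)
The plan is to reduce everything to the explicit recursive structure of the genealogical tree. The root $\bullet$ has at most two children: one reached by a $\beta$-arrow pointing into $\bullet$ (call it the ``$\beta$-in'' child) and one reached by an $\alpha$-arrow pointing out of $\bullet$ (the ``$\alpha$-out'' child). Every non-root vertex $v$ has a unique parent $p(v)$ joined to $v$ by either an arrow $v\to p(v)$ labelled $\beta$ or an arrow $p(v)\to v$ labelled $\alpha$, together with at most one $\beta$-in child and one $\alpha$-out child of its own. Because $R$ contains $\bullet$ and is connected, every non-root vertex $v$ of $R$ has $p(v)\in R$; I would record this observation at the outset.

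For parts (a) and (c) I would argue from the local description of the relations $\alpha\beta=0$: a vertex $v$ of $R$ lies on such a relation exactly when it admits both an incoming $\beta$-arrow and an outgoing $\alpha$-arrow inside $R$. Scanning the list of possible arrows at $v$, the only candidate for an incoming $\beta$ is the arrow from the $\beta$-in child and the only candidate for an outgoing $\alpha$ is the arrow to the $\alpha$-out child, since the parent-arrow is always either $\beta$-out or $\alpha$-in and contributes neither. Hence $v$ lies on a relation if and only if both children of $v$ (one of each type) are in $R$. For a non-root vertex this amounts to having three neighbours in $R$ (parent plus two children); for the root $\bullet$ it amounts to having two neighbours, there being no parent to add. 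This is precisely part (c). Part (a) is then the complementary enumeration at the root: with no parent, $\bullet$ has $0$, $1$ or $2$ children in $R$, giving either a sink with one neighbour (only the $\beta$-in child is present), a source with one neighbour (only the $\alpha$-out child is present), or the two-neighbour case with a relation.

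For part (b) the crucial ingredient is the self-similarity of the genealogical tree: for any vertex $w$, the infinite subtree of descendants of $w$ is isomorphic, as a rooted quiver with relation, to the whole genealogical tree rooted at $\bullet$, via the map sending $w\mapsto\bullet$, matching the $\beta$-in (resp.\ $\alpha$-out) child of each vertex to the $\beta$-in (resp.\ $\alpha$-out) child of its image, and extended recursively. Given a full connected subquiver with relation $R'$ of $R$, I would take $v$ to be the vertex of $R'$ closest to $\bullet$ in the genealogical tree; such a $v$ is unique because $R'$ is a connected subtree. Connectivity of $R'$ then forces every other vertex of $R'$ to be a descendant of $v$, so $R'$ lies inside the subtree of descendants of $v$. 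The self-similarity isomorphism then identifies $(R',v)$ with a full connected rooted subquiver with relation of the genealogical tree containing $\bullet$, \ie with an element of $\cq(m)$.

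The main obstacle I foresee is notational rather than mathematical: one has to keep the two possible orientations of the parent-arrow at a non-root vertex clearly separated, and one has to read the conclusion of (b) up to isomorphism of rooted quivers with relation, since every element of $\cq(m)$ has $\bullet$ as its root while the chosen $v$ is in general a different vertex of the genealogical tree. Once these points are pinned down, the whole statement reduces to a routine local inspection of the recursive tree picture.
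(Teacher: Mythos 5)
Your proof is correct, and in fact the paper offers no proof at all of this lemma (it is introduced with ``The following is an easy observation''), so your write-up simply supplies the omitted routine verification: the local analysis at a vertex (the parent edge, present automatically because $R$ is connected and contains the root, contributes neither an incoming $\beta$ nor an outgoing $\alpha$, so a vertex is the middle of a relation exactly when both of its children lie in $R$) gives (a) and (c), and the self-similarity of the genealogical tree together with the minimal-depth vertex of the connected subquiver gives (b). Two clarifications you make are genuinely needed and worth keeping explicit: ``lies on a relation'' must be read as ``is the middle vertex of a relation $\alpha\beta$'' (with endpoints included, part (c) would already fail for the element of $\cq(3)$ consisting of the root and both of its children), and this is the reading consistent with all later uses in the paper; and in (a) the degenerate case $n=1$, where the root has no neighbour, is implicitly excluded, just as in the paper's subsequent applications of (a), which are all for $n\geq 2$.
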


Put
\begin{align*}
\cq_h(n)&=\{R\in\cq(n)\mid R \text{ has trivial relation}\},\\
\cq_{nh}(n)&=\{R\in\cq(n)\mid R \text{ has non-trivial relations}\}.
\end{align*}
We say that the rooted quivers with relation in $\cq_h(n)$ are \emph{hereditary} and those in $\cq_{nh}(n)$ are \emph{non-hereditary}. 

\begin{lemma}
\label{lem:hereditary-QR}
\begin{itemize}
\item[(a)]
$\cq_h(n)$ consists of all rooted quivers (with trivial relation) whose underlying quiver is of type $\mathbb{A}_n$ and whose root has at most one neighbour. Moreover, $|\cq_h(n)|=2^{n-1}$.
\item[(b)] For $R\in\cq_h(n)$, if $n\geq 2$, then there are exactly two vertices which has one neighbour, one is the root, and we call the other the \emph{leaf} of $R$; if $n=1$, we  call the root the leaf. Switching the root and the leaf defines an involution on $\cq_h(n)\colon R\mapsto \bar{R}$. If two different elements $R,R'\in\cq_h(n)$ are isomorphic as quivers, then $R'=\bar{R}$.
\item[(c)] The number of $R\in\cq_h(n)$ with $R=\bar{R}$ is $2^{\frac{n-1}{2}}$ when $n$ is odd, and $0$ when $n$ is even.
\end{itemize}
\end{lemma}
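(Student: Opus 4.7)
The plan is to use Lemma~\ref{lem:vertices-lying-on-a-relation} to identify $\cq_h(n)$ with rooted $\mathbb{A}_n$-paths in the genealogical tree, and then count via binary sequences.

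For (a), if $R\in\cq_h(n)$ has only trivial relations, Lemma~\ref{lem:vertices-lying-on-a-relation}(c) forces every vertex to have degree at most $2$ and the root to have degree at most $1$; hence the underlying graph is a path of type $\mathbb{A}_n$ with the root at an endpoint, and conversely every such rooted path subquiver has trivial induced relation. To count, I would encode $R$ by its vertex sequence $v_0=\text{root},v_1,\ldots,v_{n-1}$: the case $n=1$ gives one subquiver, and for $n\geq 2$ the root has two tree-neighbours giving $2$ choices for $v_1$; by induction the path descends in the tree, so $v_i$'s tree-parent is always $v_{i-1}$, leaving exactly two tree-children to choose from at each subsequent step. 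Together this yields $2^{n-1}$ elements.

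For (b), when $n\geq 2$ the $\mathbb{A}_n$-path has exactly two degree-one vertices, defining the leaf as the one other than the root; when $n=1$ the leaf is the root. The involution property is formal once we verify $\bar R\in\cq_h(n)$. Up to rooted-quiver isomorphism each $R$ is determined by the orientation sequence $(\epsilon_1,\ldots,\epsilon_{n-1})\in\{+,-\}^{n-1}$ of its arrows from root to leaf, and the count from (a) shows all $2^{n-1}$ such sequences occur; since $\bar R$ is itself an abstract rooted-endpoint $\mathbb{A}_n$-quiver, it must coincide (as a rooted quiver) with some element of $\cq_h(n)$. For the final claim, any quiver isomorphism $R\to R'$ sends endpoints to endpoints; if it preserves the root then $R=R'$ as rooted quivers, so under $R\neq R'$ it swaps root and leaf, giving $R'=\bar R$.

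For (c), reading the same path from the leaf both reverses the order of the arrows and flips each orientation, so $\bar R$ has orientation sequence $(-\epsilon_{n-1},\ldots,-\epsilon_1)$, and $R=\bar R$ if and only if $\epsilon_j=-\epsilon_{n-j}$ for all $j=1,\ldots,n-1$. When $n$ is even the self-paired index $j=n/2$ forces $\epsilon_{n/2}=-\epsilon_{n/2}$, which is impossible, so the count is $0$. When $n$ is odd the indices split into $(n-1)/2$ pairs, each contributing one independent binary choice, giving $2^{(n-1)/2}$ fixed points. The only delicate point is the realisability claim $\bar R\in\cq_h(n)$ in (b), which is settled by the exact cardinality match from (a); once that is in hand the remaining verifications are pure bookkeeping.
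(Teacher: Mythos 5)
Your proof is correct and follows essentially the same route as the paper: Lemma~\ref{lem:vertices-lying-on-a-relation} yields the path-shape characterisation with the root at an endpoint, the left/right-child choices (equivalently the $2^{n-1}$ arrow orientations) give both the realisation and the count, and (b), (c) are handled by the root--leaf flip exactly as in the paper. Your explicit orientation-sequence/anti-palindrome bookkeeping in (c) is just a more explicit version of the paper's count of type-$\mathbb{A}_n$ quivers admitting the endpoint-swapping automorphism, so no genuinely different idea is involved.
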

\begin{proof}
(a) A rooted quiver whose underlying quiver is of type $\mathbb{A}_n$ and whose root has at most one neighbour must be of the form
\[\begin{xy}
(-12,0)*+{\bullet}="1",
(0,0)*+{\circ}="2",
(12,0)*+{\cdots}="3",
(28,0)*+{\circ}="4",
(42,0)*+{\circ}="5",
\ar@{-}"2";"1", \ar@{-}"3";"2",\ar@{-}"4";"3",\ar@{-}"5";"4",
\end{xy}
\]
where the $n-1$ arrows can be of any of the two orientations. Therefore the number of such rooted quivers is $2^{n-1}$. It remains to prove the first statement. It is trivial for $n=1$, because $\cq(1)=\{\bullet\}$. We assume $n\geq 2$ and let $R\in\cq_h(n)$. By Lemma~\ref{lem:vertices-lying-on-a-relation} (c), any vertex of $R$ has at most two neighbours (and at least one neighbour because $R$ is connected and has at least two vertices). This implies that $R$ as a quiver is of type $\mathbb{A}_n$. By Lemma~\ref{lem:vertices-lying-on-a-relation} (a), the root of $R$ is one of the two vertices of $R$ which has exactly one neighbour. Conversely, let $R$ be any rooted quiver whose underlying quiver is of type $\mathbb{A}_n$ and whose root has exactly one neighbour. We realise $R$ as a full connected rooted subquiver with relation of the genealogical tree as follows. First, the root of the genealogical tree is contained in $R$ as the root. Next, consider the arrow in $R$ between the root and its unique neighbour. If the root is its source, then $R$ contains the lower right neighbour of the root in the genealogical tree; if the root is its target, then $R$ contains the lower left neighbour of the root in the genealogical tree. Then we proceed with induction.

(b) Assume that $R$ and $R'$ are different elements of $\cq_h(n)$ and let $R\to R'$ be an isomorphism of quivers. Then it necessarily takes the root of $R$ to the leaf of $R'$ and takes the leaf of $R$ to the root of $R'$. It follows that $R'=\bar{R}$. 

(c) If $R=\bar{R}$, then switching the root and the leaf defines a non-trivial automorphism of the underlying quiver of $R$. Such an automorphism does not exist if $n$ is even. When $n$ is odd, the number of quivers of type $\mathbb{A}_n$ which admits such an automorphism is exactly $2^{\frac{n-1}{2}}$.
\end{proof}

\begin{lemma}
\label{lem:non-hereditary-QR}
Let $R\in\cq_{nh}(n)$ and $i,j$ be vertices of $R$. Assume that $i$ is not the root and lies on a relation $r$ of $R$. Let $p=\rho_1\cdots\rho_l$ be a walk in $R$ from $j$ to $i$ (where $\rho_1,\ldots,\rho_l$ are arrows or inverse arrows) and assume that $i$ is the first vertex on this walk which lies on a relation. 
\begin{itemize}
\item[(a)] If $j$ is the root, then $\rho_1$ is not involved in the relation $r$; if $j$ is not the root and is a source or a sink, then $\rho_1$ is involved in the relation $r$. 
\item[(b)] If $j$ is not the root, then the underlying quiver with relation of $R$ together with $j$ as the root does not belong to $\cq(n)$. 
\end{itemize}
\end{lemma}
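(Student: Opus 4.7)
The approach rests on the rigid form taken by the relations in the genealogical tree: at any vertex $v$ with three neighbours in $R$ (or two, when $v$ is the root of $R$), the relation at $v$ involves exactly the incoming $\beta$-arrow from $v$'s lower-left child and the outgoing $\alpha$-arrow to $v$'s lower-right child, while the arrow connecting $v$ to its parent (when $v$ is not the root) is not involved. Consequently, an arrow incident to $i$ is involved in $r$ precisely when it joins $i$ to one of its two children in the genealogical tree. For (a)(i), with $j$ being the root of $R$, the walk $j=v_0,v_1,\ldots,v_l=i$ descends in the tree; if $l\geq 2$ then $\rho_1$ only joins $j$ to $v_1$ and is not incident to $i$, while if $l=1$ then $\rho_1$ is the arrow between $j$ (the parent of $i$) and $i$, which is excluded from $r$ by the observation above.

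For (a)(ii) with $j$ a non-root source or sink, I plan to first pin down the local shape at $j$: such a $j$ must be a lower-left (respectively lower-right) child in the genealogical tree whose own lower-left (resp.\ lower-right) child is absent from $R$. A case analysis of the possible first step of the walk, together with the constraint that every intermediate vertex has at most two neighbours in $R$ (since none of them may lie on a relation), should force the walk to have length one and place $j$ in the lower-left or lower-right child position of $i$; this identifies $\rho_1$ as one of the two relation-arrows at $i$. The delicate point, and the main obstacle, lies precisely here: one must rule out all alternative positions of $j$ relative to $i$, which requires tracing carefully how the one-sided orientation at $j$ propagates along a chain of non-relation vertices.

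Part (b) then follows by contradiction. Suppose the re-rooted quiver at $j$ belongs to $\cq(n)$. If the root of $R$ already carries a relation, then it has two neighbours in $R$, but after re-rooting it becomes a non-root relation-vertex; non-root relation-vertices in $\cq(n)$ must have three neighbours, which is a contradiction. Otherwise the root of $R$ carries no relation, so there exists some other relation-vertex $v_0$. If $v_0$ lies on the unique tree-path from the root of $R$ to $j$, I would take $i$ to be the first relation-vertex encountered on a walk starting from a suitable source or sink in the subtree beyond $v_0$ and apply (a)(ii) to identify a relation-arrow at $i$ pointing towards $j$; in the re-rooted quiver this would have to serve as $i$'s parent-arrow, contradicting the rigid shape of relations recorded in the first paragraph. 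If no such $v_0$ lies on the path, then the arrow between $j$ and its unique neighbour on that path, inherited from the genealogical tree orientation, cannot satisfy the required root-pattern (incoming $\beta$ from a lower-left child or outgoing $\alpha$ to a lower-right child) at the new root $j$, which yields the desired contradiction.
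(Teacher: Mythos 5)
There is a genuine gap, and it starts with a misreading of the notation. In the paper the walk $p=\rho_1\cdots\rho_l$ is written in composition order, so $\rho_l$ is the step leaving $j$ and $\rho_1$ is the step by which the walk \emph{arrives at} $i$; the whole content of (a) is about which arrow incident with $i$ the walk uses to enter $i$. You instead take $\rho_1$ to be the first step at $j$ (``$\rho_1$ only joins $j$ to $v_1$''), and then the second half of (a) forces you to argue that the walk must have length one. That claim is false: take $R$ with root $1$, its lower-left child $2$ having both children $4$ and $5$ (so the unique relation is $4\to 2\to 5$), and $8$ the lower-left child of $4$. Then $j=8$ is a non-root source with one neighbour, the walk $8\to 4\to 2$ to the first relation vertex $i=2$ has length two, and $j$ is not a child of $i$; under your indexing the ``first'' arrow is not even incident with $i$, so the statement you are trying to prove is simply not true, and no case analysis will force $l=1$. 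Under the correct indexing the claim concerns the last arrow $4\to 2$, which is involved in $r$, and the proof is the paper's short argument: since no intermediate vertex lies on a relation, by Lemma~\ref{lem:vertices-lying-on-a-relation}(c) none of them can have three neighbours (nor be a root with two), so a reduced walk starting at the root only goes downward in the genealogical tree and enters $i$ through its parent-arrow, which is not in $r$, while a reduced walk starting at a non-root source or sink (with one neighbour, which is the only case needed later) only goes upward and enters $i$ through an arrow joining $i$ to one of its children, which is in $r$. Your first paragraph contains exactly the right structural observation about which arrows a relation involves, but the rest of (a)(ii) is built on the wrong arrow and on the false ``length one'' reduction, and you yourself leave it as the ``main obstacle'' rather than closing it.

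Part (b) also drifts from what is needed and is left schematic. The paper's argument is short and uses (a) twice on the \emph{same} walk: if re-rooting at $j$ gave $R'\in\cq(n)$, then the first statement of (a) applied to $R'$ (where $j$ is the root) says $\rho_1$ is not involved in $r$, while the second statement applied to $R$ would say it is, so $j$ can be neither a source nor a sink; Lemma~\ref{lem:vertices-lying-on-a-relation}(a) applied to $R'$ then forces $j$ to lie on a relation and have two neighbours, and Lemma~\ref{lem:vertices-lying-on-a-relation}(c) applied to $R$ forces such a vertex to be the root of $R$, a contradiction. Your first case (old root lies on a relation) is a correct use of Lemma~\ref{lem:vertices-lying-on-a-relation}(c) for $R'$, but the remaining cases (``take $i$ to be the first relation-vertex encountered on a walk starting from a suitable source or sink\dots'', ``the arrow between $j$ and its unique neighbour on that path \dots cannot satisfy the required root-pattern'') are assertions, not arguments: re-rooting means finding an abstract isomorphism onto a subtree of the genealogical tree, so the orientation ``inherited'' from the old embedding proves nothing by itself; this is precisely what the two applications of (a) are for. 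As it stands, both the key step of (a)(ii) and the decisive steps of (b) are missing.
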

\begin{proof}
(a) To go from $j$ to $i$ on the genealogical tree, if $j$ is the root, then all $\rho_l,\ldots,\rho_1$ are downward, and $\rho_1$ is not involved in the relation $r$; if $j$ is not the root and $j$ is a source or a sink, then all $\rho_l,\ldots,\rho_1$ are upward, and $\rho_1$ is involved in the relation $r$. 

(b) Let $R'$ be the rooted quiver with relation obtained from $R$ by changing the root to $j$ and suppose $R'\in\cq(n)$. Then by applying the first statement of (a) to $R'$ and applying the second statement of (a) to $R$, we see that $j$ cannot be a sink or a source of $R$. So by Lemma~\ref{lem:vertices-lying-on-a-relation} (a) applied to $R'$, $j$ is a vertex which lies on a relation and has two neighbours. But by Lemma~\ref{lem:vertices-lying-on-a-relation} (c) applied to $R$, $j$ has to be the root, a contradiction.
\end{proof}

\begin{corollary}
\label{cor:non-hereditary-QR}
If $R$ and $R'$ are different elements of $\cq_{nh}(n)$, then they are not isomorphic as quivers with relation.
\end{corollary}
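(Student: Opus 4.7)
The plan is to argue by contradiction: suppose $R, R' \in \cq_{nh}(n)$ are distinct but $\varphi \colon R \to R'$ is an isomorphism of quivers with relation. Writing $r$ for the root of $R$ and $r'$ for the root of $R'$, set $j := \varphi^{-1}(r')$, a vertex of $R$. I would then split into two cases depending on whether or not $j = r$.

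If $j = r$, then $\varphi$ is already an isomorphism of \emph{rooted} quivers with relation. The sentence immediately following the definition of $\cq(n)$ records that distinct elements of $\cq(n)$ are non-isomorphic as rooted quivers with relation, so this case forces $R = R'$, contradicting our assumption.

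If $j \neq r$, form the rooted quiver with relation $R''$ obtained from $R$ by installing $j$ as the new root. Through $\varphi$, the pair $R''$ is isomorphic as a rooted quiver with relation to $R'$. Since $R' \in \cq(n)$ is realized as a full connected rooted subquiver with relation of the genealogical tree, this realization transports along $\varphi$ to one for $R''$, so $R'' \in \cq(n)$. This contradicts Lemma~\ref{lem:non-hereditary-QR}(b), which says precisely that such an $R''$ cannot belong to $\cq(n)$ when $j$ is not the root of $R$.

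The only subtle point is the mild identification in the second case between $\cq(n)$ as a set of actual subquivers of the genealogical tree and $\cq(n)$ as a collection of isomorphism classes of rooted quivers with relation; the remark after the definition ensures that these viewpoints coincide, so that "$R'' \in \cq(n)$" is really a property of the isomorphism class and can be deduced from an isomorphism with some element of $\cq(n)$. Once this is acknowledged, the corollary is an immediate combination of Lemma~\ref{lem:non-hereditary-QR}(b) with the convention that elements of $\cq(n)$ are pairwise non-isomorphic as rooted quivers with relation, and there is essentially nothing further to check.
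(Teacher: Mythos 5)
Your proof is correct and follows essentially the same route as the paper: both arguments use Lemma~\ref{lem:non-hereditary-QR}(b) to force any isomorphism of quivers with relation to send root to root, and then conclude $R=R'$ from the fact that elements of $\cq(n)$ are pairwise non-isomorphic as rooted quivers with relation. Your extra remark about interpreting ``belongs to $\cq(n)$'' up to isomorphism of rooted quivers with relation is the intended reading and causes no issue.
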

\begin{proof}
By Lemma~\ref{lem:non-hereditary-QR} (b), any isomorphism $R'\to R$ of quivers with relation takes the root of $R'$ to the root of $R$. Then it follows that $R=R'$.
\end{proof}

\subsection{An operation}
\label{ss:an-operation}
For $R\in\cq(n)$, we call the full subquiver of $R$ lying on the left border of the genealogical tree the \emph{left border of $R$}. The left border of $R$ is necessarily a quiver of type $\mathbb{A}$ with linear orientation and with the unique sink as the root. Put
\begin{align*}
\cq^1(1)&=\cq(1), \text{~and~for~}n\geq 2\\
\cq^1(n)&=\{R\in\cq(n)\mid \text{the left border of $R$ has at least two vertices and there is exactly}\\
&\hspace{20pt}\text{one arrow going out from the unique source of the left border}\}.
\end{align*}
Assume that $n\geq 3$. For $R\in\cq^1(n)$, let $u$ denote the unique source of the left border of $R$, and let $v$ be the target of the unique arrow going out from $u$. Put
\begin{align*}
\cq^2(n)&=\{R\in\cq^1(n)\mid \text{$v$ lies on a relation of $R$}\},\\
\cq^3(n)&=\{R\in\cq^2(n)\mid \text{the rooted quiver with relation obtained from $R$}\\
&\hspace{20pt}\text{by removing the vertex $u$ is non-hereditary}\}. 
\end{align*}
Note that $\cq^3(3)$ and $\cq^3(4)$ are empty and $\cq^3(5)$ has two elements  (here we use a dotted curve to indicate the relation):
\begin{align}
\label{QR:Q(5)}
\begin{xy}
(0,-10) *+{u}="21",
(10,0) *+{\circ}="31",
(30,0) *+{\circ}="33",
(20,10) *+{\bullet}="32",
(16,6)*+{ }="34",
(24,6)*+{ }="35",
(20,-10)*+{\circ}="41",
(6,-4)*+{ }="44",
(14,-4)*+{ }="45",
\ar"31";"32", \ar"32";"33", \ar@/^0.6pc/@{.}"35";"34",
\ar"21";"31", \ar"31";"41", \ar@/^0.6pc/@{.}"45";"44",
\end{xy}\hspace{15pt},\hspace{15pt}
\begin{xy}
(10,0) *+{u}="31",
(30,0) *+{\circ}="33",
(20,10) *+{\bullet}="32",
(16,6)*+{ }="34",
(24,6)*+{ }="35",
(20,-10)*+{\circ}="41",
(26,-4)*+{ }="44",
(34,-4)*+{ }="45",
(40,-10)*+{\circ}="61",
\ar"31";"32", \ar"32";"33", \ar@/^0.6pc/@{.}"35";"34",
\ar@/^0.6pc/@{.}"45";"44",
\ar"33";"61", \ar"41";"33",
\end{xy}
\end{align}

For $R\in\cq^2(n)$, there must be a (unique) slanted downward arrow going out from $v$, which we denote by $\alpha$ and whose target we denote by $w$:
\[
\begin{xy}
 (-10,-4)*+{u}="1",
(0,6)*+{v}="2",
(10,-4)*+{w}="3",
(-4,2)*+{ }="4",
(4,2)*+{ }="5",
\ar"1";"2", \ar^\alpha"2";"3", \ar@/^0.6pc/@{.}"5";"4",
\end{xy}
\]
Define $\mu(R)$ as the rooted quiver with relation which is obtained from $R$ by deleting the vertex $u$ and adding
\begin{itemize}
\item[-] a vertex $x$,
\item[-] an arrow $\eta\colon w\to x$,
\item[-] a relation $\eta\alpha$:
\begin{align}
\label{QR:non-hereditary-A3}
\begin{xy}
(0,5) *+{v}="21",
(20,5) *+{x}="23",
(10,-5) *+{w}="22",
(6,-1)*+{ }="24",
(14,-1)*+{ }="25",
\ar_\alpha"21";"22", \ar_\eta"22";"23", \ar@/_0.6pc/@{.}"25";"24",
\end{xy}
\end{align}
\end{itemize}
If we apply $\mu$ to the two rooted quivers with relation in \eqref{QR:Q(5)}, we obtain
\begin{align}
\label{QR:M(5)}
\begin{xy}
(10,0) *+{\circ}="31",
(29,2) *+{\circ}="33",
(20,10) *+{\bullet}="32",
(29,-2) *+{x}="36",
(16,6)*+{ }="34",
(24,6)*+{ }="35",
(20,-10)*+{\circ}="41",
(16,-6)*+{ }="44",
(24,-6)*+{ }="45",
\ar"31";"32", \ar"32";"33", \ar@/^0.6pc/@{.}"35";"34",
\ar"31";"41", \ar@/_0.6pc/@{.}"45";"44",
\ar"41";"36",
\end{xy}\hspace{15pt},\hspace{15pt}
\begin{xy}
(40,10) *+{x}="31",
(30,0) *+{\circ}="33",
(20,10) *+{\bullet}="32",
(26,4)*+{ }="34",
(34,4)*+{ }="35",
(20,-10)*+{\circ}="41",
(26,-4)*+{ }="44",
(34,-4)*+{ }="45",
(40,-10)*+{\circ}="61",
\ar"33";"31", \ar"32";"33", \ar@/_0.6pc/@{.}"35";"34",
\ar@/^0.6pc/@{.}"45";"44",
\ar"33";"61", \ar"41";"33",
\end{xy}
\end{align}

Any $R\in\cq^2(n)$ belongs to one of the following three families:
\begin{itemize}
\item[(1)] the left border of $R$ has exactly two vertices (and the root of $\mu(R)$ is a source),
\item[(2)] the root of $R$ is a sink (and the root of $\mu(R)$ is a sink),
\item[(3)] the root of $R$ is not a sink and its left border has at least three vertices (and the root of $\mu(R)$ lies on a relation and has two neighbours).
\end{itemize}
Put 
\begin{align*}
\cm(n)&=\{\mu(R)\mid R\in\cq^3(n)\},\\
\widetilde{\cm}_k(n)&=\{\mu(R)\mid R\in\cq^2(n) \text{ belongs to the family (k)}\},~k=1,2,3,\\
\cm_k(n)&=\cm(n)\cap\widetilde{\cm}_k(n),~k=1,2,3.
\end{align*}
It is clear that elements of $\cm(n)$ are pairwise non-isomorphic as rooted quivers with relation, $\cm(n)=\cm_1(n)\sqcup \cm_2(n)\sqcup \cm_3(n)$, and that $\mu\colon \cq^3(n)\to \cm(n)$ is a bijection. Notice also that $\cm(3)$ and $\cm(4)$ are empty, $\cm_2(5)$ is also empty and both $\cm_1(5)$ and $\cm_3(5)$ have one element, see \eqref{QR:M(5)}.

\smallskip
For $R_1\in\cq(n-2)$, we define $E(R_1)$ as the rooted quiver with relation obtained from $R_1$  and the quiver with relation
\[
\begin{xy}
(0,0) *+{1}="21",
(20,0) *+{3}="23",
(10,-10) *+{2}="22",
(6,-6)*+{ }="24",
(14,-6)*+{ }="25",
\ar_{\eta_1}"21";"22", \ar_{\eta_2}"22";"23", \ar@/_0.6pc/@{.}"25";"24",
\end{xy}
\]
by identifying the root of $R_1$ with the vertex $2$. The vertex $1$ is the root of $E(R_1)$. We will also consider $E(R_1)$ as a quiver with relation with distinguished vertices $1,2,3$. For example, 
\[
E(\begin{xy}
 (-7.5,-3)*+{\circ}="1",
(0,4.5)*+{\bullet}="2",
\ar"1";"2",
\end{xy})=
\begin{xy}
 (-10,-10)*+{\circ}="1",
(0,0)*+{2}="2",
(-10,10) *+{1}="21",
(10,10) *+{3}="23",
(-4,4)*+{ }="24",
(4,4)*+{ }="25",
\ar"21";"2", \ar"2";"23", \ar@/_0.6pc/@{.}"25";"24",
\ar"1";"2", 
\end{xy},\hspace{15pt}
E(\begin{xy}
 (-7.5,-3)*+{\circ}="1",
(0,4.5)*+{\bullet}="2",
(7.5,-3)*+{\circ}="3",
(-3,1.5)*+{ }="4",
(3,1.5)*+{ }="5",
\ar"1";"2", \ar"2";"3", \ar@/^0.6pc/@{.}"5";"4",
\end{xy})=
\begin{xy}
 (-10,-10)*+{\circ}="1",
(0,0)*+{2}="2",
(10,-10)*+{\circ}="3",
(-4,-4)*+{ }="4",
(4,-4)*+{ }="5",
(-10,10) *+{1}="21",
(10,10) *+{3}="23",
(-4,4)*+{ }="24",
(4,4)*+{ }="25",
\ar"21";"2", \ar"2";"23", \ar@/_0.6pc/@{.}"25";"24",
\ar"1";"2", \ar"2";"3", \ar@/^0.6pc/@{.}"5";"4",
\end{xy}.
\]

\begin{lemma}
\label{lem:non-hereditary-mutated-QR-1}
\begin{itemize}
\item[(a)] For $R_1\in\cq(n-2)$, $E(R_1)$ belongs to $\cq(n)$ if and only if $R_1$ is hereditary.
\item[(b)] We have
\begin{align*}
\widetilde{\cm}_1(n)&=\{E(R_1)\mid R_1\in\cq(n-2)\},\\
\cm_1(n)&=\{E(R_1)\mid R_1\in\cq(n-2) \text{ is non-hereditary}\}.
\end{align*}
\end{itemize}
\end{lemma}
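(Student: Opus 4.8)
The plan is to prove both parts by determining when $E(R_1)$ is isomorphic, as a quiver with relation, to an element of $\cq(n)$, i.e.\ when it can be identified with a full connected subquiver with relation of the genealogical tree. The one fact I use repeatedly is a local rigidity at the vertex $2$: the defining relations $\alpha\beta$ of the tree are exactly the length-two paths $(\text{lower-left child})\to(\text{vertex})\to(\text{lower-right child})$, so in any such identification the relation $\eta_2\eta_1$ on $1\to 2\to 3$ must be realised in this shape. Hence $2$ is forced to be a branching vertex whose two children are $1$ and $3$; both child-slots of $2$ are then used up, so any further neighbour of $2$ — equivalently, any neighbour of the root of $R_1$ inside $R_1$ — can only be the parent of $2$.

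For part (a), suppose first that $R_1$ is non-hereditary. If the root of $R_1$ has two neighbours in $R_1$, then $2$ has four neighbours in $E(R_1)$, which is impossible since a vertex of the tree has at most three neighbours. Otherwise the root of $R_1$ has a single neighbour $p$ in $R_1$, which by the rigidity above must be the parent of $2$; thus in any identification the vertices of $R_1$ span a full connected subquiver of the tree, isomorphic as a quiver with relation to $R_1$, in which $2$ has the parent $p$. But by Lemma~\ref{lem:non-hereditary-QR}(b) every realisation of a non-hereditary $R_1$ as a full connected subquiver of the tree has the root of $R_1$, namely $2$, as its topmost vertex — contradicting that $2$ lies below $p$. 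Hence $E(R_1)\notin\cq(n)$. Conversely, suppose $R_1$ is hereditary. By Lemma~\ref{lem:hereditary-QR}(a) it is a quiver of type $\mathbb{A}_{n-2}$ whose root $2$ is an end vertex, i.e.\ a single path issuing from $2$. I will realise $E(R_1)$ by placing $2$ at depth $n-3$ and choosing the left/right turns along the chain of ancestors of $2$ so that the resulting arrows reproduce the orientations of $R_1$ — possible because the tree is the full binary tree — and then adjoining the two children of $2$ as the vertices $1$ and $3$. This yields a full connected subquiver that contains the tree root, whose only relation is the one at $2$ (each proper ancestor of $2$ keeps a single child), and which is isomorphic as a quiver with relation to $E(R_1)$; so $E(R_1)\in\cq(n)$.

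For part (b), I will identify $\mu$ with $E$ on family~(1). If $R\in\cq^2(n)$ lies in family~(1), its left border has exactly two vertices, namely the root and its lower-left child $u$; as $u$ is the source of the left border it has no lower-left child, and by the definition of $\cq^1(n)$ it has only the one outgoing arrow to the root, so $u$ is a leaf. Since $R\in\cq^2(n)$, the vertex $v=\text{root}$ lies on a relation, so the root also has a lower-right child $z$; writing $R_1\in\cq(n-2)$ for the full connected subquiver rooted at $z$, the quiver $R$ is recovered from $R_1$ by adding the root as the parent of $z$ and the leaf $u$ as the other child of the root. Comparing the definition of $\mu$ (delete $u$; adjoin $x$ and $\eta\colon z\to x$ with the relation on $\text{root}\to z\to x$) with that of $E$ (glue the root of $R_1$ to the vertex $2$) shows directly that $\mu(R)=E(R_1)$ under $\text{root}\leftrightarrow 1$, $z\leftrightarrow 2$, $x\leftrightarrow 3$. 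Conversely every $R_1\in\cq(n-2)$ occurs in this way, so $\widetilde{\cm}_1(n)=\{E(R_1)\mid R_1\in\cq(n-2)\}$. Finally $R\in\cq^3(n)$ iff $R$ with $u$ removed is non-hereditary; removing $u$ leaves the root with its single child $z$, hence no relation at the root, so $R-u$ is non-hereditary exactly when $R_1$ is. This gives $\cm_1(n)=\{E(R_1)\mid R_1\text{ is non-hereditary}\}$.

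The step I expect to be most delicate is the non-hereditary direction of (a): everything hinges on the relation forcing the two children of $2$ to be $1$ and $3$, so that $R_1$ must be embedded \emph{below} $2$ rather than hanging from it, after which the rigidity of non-hereditary rooted quivers with relation (Lemma~\ref{lem:non-hereditary-QR}(b)) excludes this. The hereditary direction needs only the freedom of left/right turns in the binary tree, and part (b) is a direct matching of the two constructions.
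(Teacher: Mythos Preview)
Your proof is correct. For part~(b) your argument is essentially the paper's, only with slightly different bookkeeping (you identify $v$ with the root directly and track the correspondence $\text{root}\leftrightarrow 1$, $z\leftrightarrow 2$, $x\leftrightarrow 3$, whereas the paper cuts $\mu(R)$ at $w$).

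For part~(a) you take a genuinely different route. The paper exhibits, in the non-hereditary case, a concrete ``forbidden'' full subquiver with relation of $E(R_1)$ --- the X-shape \eqref{QR:type-X} --- and shows by a direct case check that no vertex of it can serve as a root, whence by Lemma~\ref{lem:vertices-lying-on-a-relation}(b) $E(R_1)\notin\cq(n)$. You instead use the local rigidity at vertex~$2$ (the relation $\eta_2\eta_1$ forces $1,3$ to be the two children of $2$) together with the uniqueness of the root for non-hereditary elements (Lemma~\ref{lem:non-hereditary-QR}(b)) to derive a contradiction from any purported embedding. Your argument is a bit more conceptual and avoids introducing an auxiliary shape; the paper's approach has the advantage that the obstruction \eqref{QR:type-X} is reusable --- it reappears verbatim in the proofs of Lemmas~\ref{lem:non-hereditary-mutated-QR-2} and~\ref{lem:non-hereditary-mutated-QR-3}. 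For the hereditary direction both arguments amount to the same explicit embedding, with the leaf of $R_1$ landing at the tree root.
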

\begin{proof}
(a) If $R_1$ is hereditary, then by turning $E(R_1)$ upside down, we see that it belongs to $\cq(n)$ with the leaf of $R_1$ as the root. If $R_1$ is non-hereditary, then $E(R_1)$ has a full subquiver with relation of the form
\begin{align}
\label{QR:type-X}
\begin{xy}
(0,16) *+{\circ}="1",
(20,16) *+{\circ}="3",
(10,6) *+{*}="2",
(6,10)*+{ }="24",
(14,10)*+{ }="25",
(0,-16) *+{\circ}="4",
(10,-6) *+{*}="5",
(20,-16) *+{\circ}="6",
(6,-10)*+{ }="34",
(14,-10)*+{ }="35",
\ar"1";"2", \ar"2";"3", \ar@{--}"2";"5", \ar@/_0.6pc/@{.}"25";"24",
\ar"4";"5", \ar"5";"6", \ar@/^0.6pc/@{.}"35";"34",
\end{xy}
\end{align}
where the dashed line between the two $*$'s is a quiver of type $\mathbb{A}_{m-4}$ for some $m$. This cannot belong to $\cq(m)$, because no vertex can serve as the root by Lemma~\ref{lem:vertices-lying-on-a-relation} (a) and Lemma~\ref{lem:non-hereditary-QR} (a). Thus by Lemma~\ref{lem:vertices-lying-on-a-relation} (b), $E(R_1)$ does not belong to $\cq(n)$.

(b) Let $R\in\cq^2(n)$ belong to the family (1). Then the vertex $v$ of $\mu(R)$ is the root and a source of $\mu(R)$. We double the vertex $w$ and cut $\mu(R)$ into two at $w$ to obtain two quivers with relation: one consists of the vertices $v$, $w$ and $x$ (of the form \eqref{QR:non-hereditary-A3}), and the other we denote by $R_1$. We consider $R_1$ as a rooted quiver with relation with root $w$. It can also be obtained from $R$ by removing the vertices $u$ and $v$, so it belongs to $\cq(n-2)$. Then $\mu(R)=E(R_1)$. This shows the inclusion `$\subseteq$' in the first equality. For the other direction, let $R_1\in\cq(n-2)$ and we perform the following operation to $E(R_1)$ to obtain a rooted quiver with relation $R$: 
\begin{itemize}
\item[-] remove the vertex $3$,
\item[-] add a vertex $0$,
\item[-] add an arrow $\delta\colon 0\to 1$,
\item[-] add a relation $\eta_1\delta$.
\end{itemize}
Then $R\in\cq^2(n)$ belongs to the family (1), and $E(R_1)=\mu(R)\in\widetilde{\cm}_1(n)$.

Moreover, $\mu(R)=E(R_1)$ belongs to $\cm_1(n)$ if and only if the quiver with relation obtained from $R$ by removing $u$ is non-hereditary, that is, the quiver with relation obtained from $E(R_1)$ by removing the vertex $3$ is non-hereditary, which happens if and only if $R_1$ is non-hereditary. This shows the second equality.
\end{proof}

Let $2\leq j\leq n-2$. For $R_1\in\cq^1(j-1)$ and $R_2\in\cq(n-j-1)$, define $E(R_1,R_2)$ as the rooted quiver with relation obtained from $R_1$ and $R_2$ and the quiver with relation
\[
\begin{xy}
(0,0) *+{3}="21",
(20,0) *+{5}="23",
(10,-10) *+{4}="22",
(6,-6)*+{ }="24",
(14,-6)*+{ }="25",
(10,10) *+{2}="26",
(20,20) *+{1}="27",
\ar_{\eta_1}"21";"22", \ar_{\eta_2}"22";"23", \ar@/_0.6pc/@{.}"25";"24",
\ar"26";"27",
\end{xy}
\]
by identifying the root of $R_1$ with the vertex $2$, the source of the left border of $R_1$ with the vertex $3$, and the root of $R_2$ with the vertex $4$. The vertex $1$ is the root of $E(R_1,R_2)$. We will also consider $E(R_1,R_2)$ as a quiver with relation with distinguished vertices $1,2,3,4,5$. For example, 
\[
E(~\bullet~, \begin{xy}
 (-7.5,-3)*+{\circ}="1",
(0,4.5)*+{\bullet}="2",
\ar"1";"2", 
\end{xy})=
\begin{xy}
 (-10,-10)*+{\circ}="1",
(0,0)*+{4}="2",
(-10,10) *+{2=3}="21",
(10,10) *+{5}="23",
(-4,4)*+{ }="24",
(4,4)*+{ }="25",
(0,20) *+{1}="27",
\ar"21";"2", \ar"2";"23", \ar@/_0.6pc/@{.}"25";"24",
\ar"1";"2", \ar"21";"27",
\end{xy}
,~~
E(\begin{xy}
 (-7.5,-3)*+{\circ}="1",
(0,4.5)*+{\bullet}="2",
(7.5,-3)*+{\circ}="3",
(-3,1.5)*+{ }="4",
(3,1.5)*+{ }="5",
\ar"1";"2", \ar"2";"3", \ar@/^0.6pc/@{.}"5";"4",
\end{xy}
,\begin{xy}
 (-7.5,-3)*+{\circ}="1",
(0,4.5)*+{\bullet}="2",
(7.5,-3)*+{\circ}="3",
(-3,1.5)*+{ }="4",
(3,1.5)*+{ }="5",
\ar"1";"2", \ar"2";"3", \ar@/^0.6pc/@{.}"5";"4",
\end{xy})=
\begin{xy}
 (-10,-20)*+{\circ}="1",
(0,-10)*+{4}="2",
(10,-20) *+{\circ}="3",
(-4,-14)*+{ }="4",
(4,-14)*+{ }="5",
(-10,0) *+{3}="21",
(10,-2) *+{5}="23",
(-4,-6)*+{ }="24",
(4,-6)*+{ }="25",
(10,20) *+{1}="27",
(0,10)*+{2}="32",
(-4,6)*+{ }="34",
(4,6)*+{ }="35",
(10,2) *+{\circ}="36",
\ar"2";"3", \ar@/^0.6pc/@{.}"5";"4",
\ar"21";"2", \ar"2";"23", \ar@/_0.6pc/@{.}"25";"24",
\ar"1";"2", 
\ar"21";"32", \ar@/^0.6pc/@{.}"35";"34",
\ar"32";"27", \ar"32";"36",
\end{xy}
\]
\begin{lemma}
\label{lem:non-hereditary-mutated-QR-2}
\begin{itemize}
\item[(a)] For $2\leq j\leq n-2$, $R_1\in\cq^1(j-1)$ and $R_2\in\cq(n-j-1)$, $E(R_1,R_2)$ belongs to $\cq(n)$ if and only if both $R_1$ and $R_2$ are hereditary.
\item[(b)] We have
\begin{align*}
\widetilde{\cm}_2(n)&=\{E(R_1,R_2)\mid R_1\in\cq^1(j-1),~R_2\in\cq(n-j-1), 2\leq j\leq n-2\},\\
\cm_2(n)&=\{E(R_1,R_2)\mid R_1\in\cq^1(j-1),~R_2\in\cq(n-j-1),~\text{and at least one}\\
&\hspace{20pt} \text{of $R_1$ and $R_2$ is non-hereditary}, 2\leq j\leq n-2\},\\
\end{align*}
\end{itemize}
\end{lemma}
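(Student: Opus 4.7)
The proof of Lemma~\ref{lem:non-hereditary-mutated-QR-2} parallels Lemma~\ref{lem:non-hereditary-mutated-QR-1} but now handles a two-sided attachment via $R_1$ and $R_2$.

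For the ``only if'' direction of (a), suppose at least one of $R_1, R_2$ is non-hereditary; then it contains a nontrivial $\alpha\beta$-relation at some vertex $p$. Combined with the new relation $\eta_2\eta_1=0$ at vertex $4$ of $E(R_1,R_2)$, we obtain a full subquiver with relation of shape \eqref{QR:type-X}: the two ``$*$''-vertices are $p$ and $4$, connected by the path in $R_1$ or $R_2$ from $p$ down to $3$ (respectively to $4$). By Lemma~\ref{lem:vertices-lying-on-a-relation}(a) and Lemma~\ref{lem:non-hereditary-QR}(a), no vertex of such a pattern can serve as a root of an element of $\cq(m)$, so Lemma~\ref{lem:vertices-lying-on-a-relation}(b) forces $E(R_1,R_2)\notin\cq(n)$, exactly as in the proof of Lemma~\ref{lem:non-hereditary-mutated-QR-1}(a).

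For the ``if'' direction of (a), assume both $R_1$ and $R_2$ are hereditary. First observe that $R_1\in\cq^1(j-1)\cap\cq_h(j-1)$ forces $R_1$ to be its entire left-border chain with all arrows pointing toward the root $2$, since any lower-right branch off the left border would give the source a second outgoing arrow. Then I construct a tree embedding (the analogue of the ``turning upside down'' of Lemma~\ref{lem:non-hereditary-mutated-QR-1}(a)) by re-rooting $E(R_1,R_2)$ at the leaf $\ell_2$ of $R_2$: place $\ell_2$ at $\bullet$; descend through $R_2$'s chain to place vertex $4$ (each step selecting a lower-left or lower-right child per the orientation of the current arrow); place vertex $3$ as the lower-left child of $4$ and vertex $5$ as the lower-right child of $4$, matching $\eta_2\eta_1=0$ with the tree relation $\alpha\beta=0$ at $4$; extend the $R_1$-chain from $3$ downward as successive lower-right children ending at vertex $2$; finally place vertex $1$ as the lower-right child of $2$. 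A vertex-by-vertex check confirms that vertex $4$ is the only vertex with both of its tree-children present in the embedding, so the unique inherited tree-relation is the one at $4$, as required, giving $E(R_1,R_2)\in\cq(n)$.

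For part (b), I prove the set-equalities by exhibiting a bijection. Given $R\in\cq^2(n)$ in family~(2), let $v$ be the parent of $u$ on the left border and $w$ its unique lower-right child. Define $R_2$ to be the rooted subquiver of $\mu(R)$ with root $w$ consisting of $w$ and everything beyond it (excluding the newly added vertex $x$), and $R_1$ to be the rooted subquiver obtained from $\mu(R)$ by removing $1=\bullet$, $5=x$ and all vertices of $R_2$, rooted at the lower-left child of $\bullet$; finally identify vertex $3$ with $v$. A direct check from the definitions yields $R_1\in\cq^1(j-1)$, $R_2\in\cq(n-j-1)$ and $\mu(R)=E(R_1,R_2)$. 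Conversely, given such $R_1, R_2$, reinstate a source vertex $u$ below $v$ with arrow $u\to v$ and the relation $u\to v\to w=0$ to recover $R\in\cq^2(n)$ in family~(2) with $\mu(R)=E(R_1,R_2)$. For $\cm_2(n)$, the additional condition $R\in\cq^3(n)$---that $R\setminus\{u\}$ is non-hereditary---translates to ``$R_1$ or $R_2$ is non-hereditary'', since removing $u$ deletes only the $u\to v$ arrow and the $v$-relation, introducing no new relations elsewhere. The main obstacle will be in part (a)'s ``if'' direction, namely verifying that the embedding inherits no spurious tree-relations away from the central $Y$ at $4$, and in part (b), identifying vertex~$2$ and the left-border structure of $R_1$ cleanly from the decomposition of $R$ at $v$.
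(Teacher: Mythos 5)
Your ``if'' direction of (a) (re-rooting at the leaf of $R_2$ after observing that a hereditary $R_1\in\cq^1(j-1)$ must be its left-border chain) and your part (b) (cutting $\mu(R)$ at $y,v,w$ and reassembling, with the observation that removing $u$ only kills the relation at $v$) agree with the paper's argument, up to the minor point that when reconstructing $R$ from $E(R_1,R_2)$ one must also delete the vertex $5=x$, which you leave implicit. The genuine gap is in the ``only if'' direction when $R_1$ is non-hereditary. Any walk from a relation vertex $p$ of $R_1$ to the vertex $4$ necessarily ends with the arrow $\eta_1\colon 3\to 4$, which is one of the two arrows of the relation at $4$; hence in the full subquiver you describe, $4$ has only its two relation-neighbours $3$ and $5$, whereas in \eqref{QR:type-X} each starred vertex has its two relation-neighbours \emph{off} the connecting path. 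So the pattern you obtain is not of shape \eqref{QR:type-X}. What actually occurs is \eqref{QR:type-box}: since $R_1$ is non-hereditary there is a relation vertex on the left border of $R_1$ (if your $p$ lies off the left border, replace it by the vertex where the path from $3$ to $p$ leaves the left border; that vertex has both children in $R_1$ and so carries a relation --- and with your original $p$ the full subquiver would in any case acquire this extra relation, again spoiling the shape \eqref{QR:type-X}). With $3\to 4\to 5$ as the bottom relation, $3$ as the source with two outgoing arrows, and the left border up to that relation vertex as the connecting path, one gets exactly \eqref{QR:type-box}, which is how the paper argues.

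This matters because the exclusion argument you import from Lemma~\ref{lem:non-hereditary-mutated-QR-1}(a) --- ``no vertex can serve as the root, by Lemma~\ref{lem:vertices-lying-on-a-relation}(a) and Lemma~\ref{lem:non-hereditary-QR}(a)'' --- does not rule out \eqref{QR:type-box}: there the two relation vertices each have only two neighbours, so by Lemma~\ref{lem:vertices-lying-on-a-relation}(a) either one is an admissible root candidate, and Lemma~\ref{lem:non-hereditary-QR}(a) gives no contradiction for them since the relevant walk would have to start at a vertex already lying on a relation. The paper excludes \eqref{QR:type-box} by a different mechanism, namely Lemma~\ref{lem:vertices-lying-on-a-relation}(c): each of the two relation vertices has exactly two neighbours, so each would have to be the root, and a rooted quiver with relation has only one root. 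You need this separate ``two roots'' argument (or some substitute) to finish the case where $R_1$ is non-hereditary; your case where $R_2$ is non-hereditary, which genuinely does produce \eqref{QR:type-X}, is correct and matches the paper.
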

\begin{proof}
(a) If both $R_1$ and $R_2$ are hereditary, then by turning $E(R_1,R_2)$ upside down, we see that it belongs to $\cq(n)$ with the leaf of $R_2$ as the root. If $R_1$ is non-hereditary, then $E(R_1,R_2)$ has a full subquiver with relation of the form
\begin{align}
\label{QR:type-box}
\begin{xy}
(0,-10) *+{\circ}="21",
(20,-10) *+{\circ}="23",
(10,-20) *+{\circ}="22",
(6,-16)*+{ }="24",
(14,-16)*+{ }="25",
(10,0) *+{\circ}="26",
(20,10) *+{\circ}="27",
(30,20) *+{\circ}="28",
(40,10) *+{\circ}="29",
(26,16)*+{ }="34",
(34,16)*+{ }="35",
\ar"21";"22", \ar"22";"23", \ar@/_0.6pc/@{.}"25";"24",
\ar@{.}"26";"27", \ar"21";"26", \ar"27";"28", \ar"28";"29",
 \ar@/^0.6pc/@{.}"35";"34",
\end{xy}
\end{align}
which, by Lemma~\ref{lem:vertices-lying-on-a-relation} (c), does not belong to $\cq(m)$ for any $m$, because otherwise there are two roots. Thus by Lemma~\ref{lem:vertices-lying-on-a-relation} (b), $E(R_1,R_2)$ does not belong to $\cq(n)$. If $R_2$ is non-hereditary, then $E(R_1,R_2)$ has a full subquiver with relation of the form \eqref{QR:type-X}, so it does not belong to $\cq(n)$.

(b) Let $R\in\cq^2(n)$ belong to the family (2). Let $y$ be the vertex from which there is an arrow to the root. We double $y$, $v$ and $w$ and cut $\mu(R)$ at $y$, $v$ and $w$ to obtain four quivers with relation: the one consisting of the root and $y$ (of type $\mathbb{A}_2$), the one consisting of $v$, $w$ and $x$  (of the form \eqref{QR:non-hereditary-A3}), and for the other two, we denote the upper one by $R_1$ and the lower one by $R_2$. Then $R_1\in\cq^1(j-1)$ and $R_2\in\cq(n-j-1)$, and $\mu(R)=E(R_1,R_2)$. This shows the inclusion `$\subseteq$' in the first equality. For the other direction, let $R_1\in\cq^1(j-1)$ and $R_2\in\cq(n-j-1)$ and we perform the following operation to $E(R_1,R_2)$ to obtain a rooted quiver with relation $R$: 
\begin{itemize}
\item[-] remove the vertex $5$,
\item[-] add a vertex $0$,
\item[-] add an arrow $\delta\colon 0\to 3$,
\item[-] add a relation $\eta_1\delta$.
\end{itemize}
Then $R\in\cq^2(n)$ belongs to the family (2), and $E(R_1,R_2)=\mu(R)\in\widetilde{\cm}_2(n)$.

Moreover, $\mu(R)=E(R_1,R_2)$ belongs to $\cm_2(n)$ if and only if the quiver with relation obtained from $R$ by removing $u$ is non-hereditary, that is, the quiver with relation obtained from $E(R_1,R_2)$ by removing the vertex $5$ is non-hereditary, which happens if and only if $R_1$ is non-hereditary or $R_2$ is non-hereditary. This shows the second equality.
\end{proof}

Let $3\leq i\leq j\leq n-2$. For $R_1\in\cq^1(j-i+1)$, $R_2\in\cq(n-j-1)$ and $R_3\in\cq(i-2)$, define $E(R_1,R_2,R_3)$ as the rooted quiver with relation obtained from $R_1$, $R_2$, $R_3$ and the quiver with relation
\[
\begin{xy}
(0,0) *+{4}="21",
(20,0) *+{6}="23",
(10,-10) *+{5}="22",
(6,-6)*+{ }="24",
(14,-6)*+{ }="25",
(10,10) *+{1}="31",
(30,10) *+{3}="33",
(20,20) *+{2}="32",
(16,16)*+{ }="34",
(24,16)*+{ }="35",
\ar_{\eta_1}"21";"22", \ar_{\eta_2}"22";"23", \ar@/_0.6pc/@{.}"25";"24",
\ar"31";"32", \ar"32";"33", \ar@/^0.6pc/@{.}"35";"34",
\end{xy}
\]
by identifying the root of $R_1$ with the vertex $1$, the source of the left border of $R_1$ with the vertex $4$, the root of $R_2$ with the vertex $5$, and the root of $R_3$ with the vertex $3$. The vertex $2$ is the root of $E(R_1,R_2,R_3)$. We will also consider $E(R_1,R_2,R_3)$ as a quiver with relation with distinguished vertices $1,2,3,4,5,6$. For example,
\[
E(\bullet,\bullet,\bullet)=\begin{xy}
(10,0) *+{1=4}="31",
(29,2) *+{3}="33",
(20,10) *+{2}="32",
(29,-2) *+{6}="36",
(16,6)*+{ }="34",
(24,6)*+{ }="35",
(20,-10)*+{5}="41",
(16,-6)*+{ }="44",
(24,-6)*+{ }="45",
\ar"31";"32", \ar"32";"33", \ar@/^0.6pc/@{.}"35";"34",
\ar"31";"41", \ar@/_0.6pc/@{.}"45";"44",
\ar"41";"36",
\end{xy}\hspace{5pt},\hspace{5pt}
E(\begin{xy}
 (-7.5,-3)*+{\circ}="1",
(0,4.5)*+{\bullet}="2",
(7.5,-3)*+{\circ}="3",
(-3,1.5)*+{ }="4",
(3,1.5)*+{ }="5",
\ar"1";"2", \ar"2";"3", \ar@/^0.6pc/@{.}"5";"4",
\end{xy},
\begin{xy}
(-7.5,-3)*+{\circ}="1",
(0,4.5)*+{\bullet}="2",
(7.5,-3)*+{\circ}="3",
(-3,1.5)*+{ }="4",
(3,1.5)*+{ }="5",
\ar"1";"2", \ar"2";"3", \ar@/^0.6pc/@{.}"5";"4",
\end{xy}
,\begin{xy}
 (-7.5,4.5)*+{\bullet}="1",
(0,-3)*+{\circ}="2",
\ar"1";"2", 
\end{xy})=
\begin{xy}
(0,0) *+{4}="21",
(19,-2) *+{6}="23",
(10,-10) *+{5}="22",
(6,-6)*+{ }="24",
(14,-6)*+{ }="25",
(10,10) *+{1}="31",
(30,10) *+{3}="33",
(20,20) *+{2}="32",
(16,16)*+{ }="34",
(24,16)*+{ }="35",
(19,2)*+{\circ}="41",
(6,6)*+{ }="44",
(14,6)*+{ }="45",
(0,-20) *+{\circ}="51",
(20,-20) *+{\circ}="53",
(6,-14)*+{}="54",
(14,-14)*+{ }="55",
(40,0)*+{\circ}="61",
\ar"21";"22", \ar"22";"23", \ar@/_0.6pc/@{.}"25";"24",
\ar"31";"32", \ar"32";"33", \ar@/^0.6pc/@{.}"35";"34",
\ar"21";"31", \ar"31";"41", \ar@/^0.6pc/@{.}"45";"44",
\ar"51";"22", \ar"22";"53", \ar@/^0.6pc/@{.}"55";"54",
\ar"33";"61",
\end{xy}
\]

\begin{lemma}
\label{lem:non-hereditary-mutated-QR-3}
\begin{itemize}
\item[(a)]
For any $3\leq i\leq j\leq n-2$ and any $R_1\in\cq^1(j-i+1)$, $R_2\in\cq(n-j-1)$ and $R_3\in\cq(i-2)$, $E(R_1,R_2,R_3)$ does not belong to $\cq(n)$.
\item[(b)] We have
\begin{align*}
\widetilde{\cm}_3(n)=\cm_3(n)&=\{E(R_1,R_2,R_3)\mid R_1\in \cq^1(j-i+1),~R_2\in\cq(n-j-1),\\
&\hspace{15pt}\text{and}~R_3\in\cq(i-2), 3\leq i\leq j\leq n-2\}.
\end{align*}
\end{itemize}
\end{lemma}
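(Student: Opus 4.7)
The plan for (a) is to rule out every embedding of $E(R_1,R_2,R_3)$ as a full connected rooted subquiver with relation of the genealogical tree by exhibiting a forced vertex collision. In any such embedding, the relation $\eta_2\eta_1=0$ at vertex $5$ must come from an $\alpha\beta=0$ relation in the genealogical tree, so $\eta_1\colon 4\to 5$ plays the role of $\beta$ (from a down-left child to its parent), forcing vertex $5$ to be the parent of vertex $4$ in the tree. If $|R_1|\geq 2$, the parent of vertex $4$ in the tree is the vertex directly above vertex $4$ on the left border of $R_1$, which already lies in $R_1$. If $|R_1|=1$, vertex $4$ coincides with vertex $1$ and the analogous analysis of the V-shape at vertex $2$ forces vertex $2$ to be the parent of vertex $1$, so vertex $5$ would have to be identified with vertex $2$. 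Either way, vertex $5$ collides with a distinct vertex of $E(R_1,R_2,R_3)$, a contradiction.

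For the first equality in (b), $\widetilde{\cm}_3(n)=\cm_3(n)$, I plan to prove the containment $\cq^2(n)\cap\text{family (3)}\subseteq\cq^3(n)$. For $R$ in this intersection, the root $r_0$ is not a sink and has both the down-left child $r_1$ on the left border and the down-right child $s$ as neighbours. The two arrows $r_1\to r_0$ and $r_0\to s$ form an $\alpha\beta=0$ relation at the root inherited from the genealogical tree. Since $u$ is the deepest vertex on the left border (with $\ell\geq 2$), this relation does not involve $u$, so $R-\{u\}$ retains it and is non-hereditary; hence $R\in\cq^3(n)$.

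For the second equality, I plan to exhibit a bijection between $\cq^3(n)\cap\text{family (3)}$ and the set of triples $(R_1,R_2,R_3)$, parallel to the proofs of Lemmas~\ref{lem:non-hereditary-mutated-QR-1} and~\ref{lem:non-hereditary-mutated-QR-2}. Given such an $R$, I double the vertices $r_0$, $v$, $w$ and cut $\mu(R)$ into five pieces: two single vertices $\{r_0\}$ and $\{x\}$, plus $R_1$ (the component rooted at $r_1$, containing $r_2,\ldots,v$ together with the down-right subtrees of $r_1,\ldots,r_{\ell-2}$), $R_2$ (rooted at $w$) and $R_3$ (rooted at $s$). That $R_1\in\cq^1$ holds because in $R_1$ the source $v$ of its left border has a unique outgoing arrow: its down-right child $w$ has been cut off and its down-left child $u$ was already removed by $\mu$. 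Conversely, given $(R_1,R_2,R_3)$ with the prescribed parameters, I reverse $\mu$ on $E(R_1,R_2,R_3)$ (remove vertex $6$ and the arrow $5\to 6$, add a vertex $u$ with arrow $u\to v$, and include the $\alpha\beta=0$ relation at $v$) to build $R$, and verify $R\in\cq(n)$ by placing the root at an arbitrary position and combining the genealogical-tree embeddings of $R_1,R_2,R_3$ with vertex $u$ at the down-left slot of $v$.

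The delicate point will be the verification in the converse direction that the resulting $R$ actually admits a valid genealogical-tree embedding, i.e., that the subtrees occupied by $R_1$, $R_2$ and the added vertex $u$ do not overlap. This rests on the $\cq^1$ condition on $R_1$: the unique outgoing arrow from the source of its left border ensures that the down-right and down-left slots at $v$ in the tree are free to house $R_2$ and $u$ respectively, allowing the three pieces to combine cleanly.
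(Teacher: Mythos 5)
Your part (b) is essentially the paper's argument: the same cut-and-glue bijection between family (3) of $\cq^2(n)$ and triples $(R_1,R_2,R_3)$ (the paper cuts at $y,z,v,w$ into triple pieces rather than at single vertices, which is only bookkeeping), together with the reverse operation on $E(R_1,R_2,R_3)$. Your explicit verification that the grafting works because the $\cq^1$-condition on $R_1$ frees both child slots at the identified vertex, and your explicit proof of $\widetilde{\cm}_3(n)=\cm_3(n)$ (the relation at the root survives removal of $u$ since the left border has at least three vertices, so $R\in\cq^3(n)$), are correct and even a bit more detailed than the paper, which leaves these points implicit.

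Part (a), however, has a genuine gap in the main case $|R_1|\geq 2$. You assert that in any realization of $E(R_1,R_2,R_3)$ inside the genealogical tree the parent of vertex $4$ must be the vertex $p$ directly above $4$ on the left border of $R_1$. This silently assumes that the hypothetical embedding restricts to the canonical placement of $R_1$, which is exactly what cannot be assumed: an isomorphism of quivers with relation onto a full subquiver of the tree may reassign which arrows play the role of $\alpha$ and which of $\beta$ (the proof of Lemma~\ref{lem:non-hereditary-mutated-QR-1}~(a) uses precisely such a re-embedding, ``turning upside down''), and the statement must exclude \emph{every} choice of root, since the application compares path algebras. Because no relation of $E(R_1,R_2,R_3)$ is centred at vertex $4$, the arrow $4\to p$ may be realized as an $\alpha$, making $p$ the lower-right child of $4$; indeed, once the relation at $5$ forces $5$ to be the parent of $4$, this is the \emph{only} possibility, so $5$ and $p$ do not collide and your claimed contradiction is not forced. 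The contradiction only emerges after propagating this forced configuration up the left border of $R_1$: each successive left-border arrow must again be an $\alpha$, until one reaches the first vertex carrying a relation of $E$ (at the latest vertex $2$), where the relation demands a $\beta$ and consistency fails. Alternatively, argue as the paper does: the full subquiver of the form \eqref{QR:type-box} (the two relation paths joined by the left border of $R_1$) has two vertices lying on relations with only two neighbours each, which by Lemma~\ref{lem:vertices-lying-on-a-relation}~(b),(c) would both have to be roots, which is impossible. Your argument as written is complete only in the degenerate case $|R_1|=1$, where the two relations meet at the same vertex.
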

\begin{proof}
(a) $E(R_1,R_2,R_3)$ always has a full subquiver with relation of the form \eqref{QR:type-box}, so it does not belong to $\cq(n)$.

(b) Let $R\in\cq^2(n)$ belong to the family (3). Let $y$ be the vertex from which there is an arrow to the root and $z$ be the vertex to which there is an arrow from the root. We double $y$, $z$, $v$ and $w$ and cut $\mu(R)$ at $y$, $z$, $v$ and $w$ to obtain five quivers with relation: the one consisting of the root, $y$ and $z$, the one consisting of $v$, $w$ and $x$, and for the other three, we denote the left upper one by $R_1$, the lower one by $R_2$ and the right upper one by $R_3$. Then $R_1\in\cq^1(j-i+1)$, $R_2\in\cq(n-j-1)$ and $R_3\in\cq(i-2)$ for some $3\leq i\leq j\leq n-2$, and $\mu(R)=E(R_1,R_2,R_3)$. This shows the inclusion `$\subseteq$' in the first equality. For the other direction, let $R_1\in\cq^1(j-i+1)$, $R_2\in\cq(n-j-1)$ and $R_3\in\cq(i-2)$ and we perform the following operation to $E(R_1,R_2,R_3)$ to obtain a rooted quiver with relation $R$: 
\begin{itemize}
\item[-] remove the vertex $6$,
\item[-] add a vertex $0$,
\item[-] add an arrow $\delta\colon 0\to 4$,
\item[-] add a relation $\eta_1\delta$.
\end{itemize}
Then $R\in\cq^2(n)$ belongs to the family (3), and $E(R_1,R_2,R_3)=\mu(R)\in\widetilde{\cm}_3(n)$.
\end{proof}

\section{Silted algebras of type $\Lambda_n$}
\label{s:silted-algebra-linear-orientation}

In this section we classify, up to isomorphism, all basic silted algebras of type $\Lambda_n$, the path algebra of the quiver
\begin{center}
$\begin{xy}
(-20,0)*+{\overrightarrow{\mathbb{A}_{n}}\colon}="0",
(-12,0)*+{1}="1",
(0,0)*+{2}="2",
(12,0)*+{\cdots}="3",
(28,0)*+{n-1}="4",
(42,0)*+{n}="5",
\ar"2";"1", \ar"3";"2",\ar"4";"3",\ar"5";"4",
\end{xy}$
\end{center}
Moreover, we calculate the number of these silted algebras. 

\subsection{Main result}
Put
\begin{align*}
\ca_t(\Lambda_n)&:=\{\text{basic tilted algebras of type $\Lambda_n$}\}/\cong~,\\
\ca_s(\Lambda_n)&:=\{\text{basic silted algebras of type $\Lambda_n$}\}/\cong~,\\
\ca_{ns}(\Lambda_n)&:=\{\text{basic non-connected silted algebras of type $\Lambda_n$}\}/\cong~.
\end{align*}
Let $a_t(\Lambda_n)$, $a_{ns}(\Lambda_n)$ and $a_s(\Lambda_n)$ denote the cardinalities of $\ca_t(\Lambda_n)$, $\ca_{ns}(\Lambda_n)$ and $\ca_s(\Lambda_n)$, respectively.
The main result of this section is:

\begin{theorem}
\label{thm:the-number-of-silted-algebras-of-Lambda_n}
\begin{itemize}
\item [(a)] 
$\ca_s(\Lambda_n)=\ca_t(\Lambda_n)\sqcup \ca_{ns}(\Lambda_n),$
where
\[
\ca_{ns}(\Lambda_n)=\bigsqcup_{m=1}^{[\frac{n}{2}]} (\ca_t(\Lambda_m)\times_s \ca_t(\Lambda_{n-m}))
\]
\item [(b)] $a_{s}(\Lambda_n)=a_{t}(\Lambda_n) + a_{ns}(\Lambda_n)$, 
where
\begin{align*}
a_{t}(\Lambda_{n})&=\frac{1}{n+1}(\begin{smallmatrix}2n\\n\end{smallmatrix})+(1-(-1)^{n})\times 2^{[\frac{n}{2}]-2}-2^{n-2},\\
a_{ns}(\Lambda_n)&=\begin{cases}
\sum\limits_{m=1}^{\frac{n-1}{2}} a_{t}(\Lambda_{n-m}) \times a_{t}(\Lambda_{m}), &\text{  if $n$ is odd},\\
\sum\limits_{m=1}^{\frac{n}{2}-1} a_{t}(\Lambda_{n-m}) \times a_{t}(\Lambda_{m})+\frac{a_{t}(\Lambda_{n/2})\times (a_{t}(\Lambda_{n/2})+1)}{2}, &\text{ if $n$ is even},
\end{cases}
\end{align*}
\end{itemize}
\end{theorem}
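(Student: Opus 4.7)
The plan is to establish Part (a) first, then derive Part (b) from the resulting structural decomposition.

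For Part (a), by Theorem~\ref{thm: silted-algebras-to-tilted-and-shod-algebras} every connected silted algebra is either tilted or strictly shod, and by Section~\ref{s:no-strictly-shod-algebra-of-type-A} no silted algebra of type $\mathbb{A}$ is strictly shod. Since $\Lambda_n$ is connected, every tilted algebra of type $\Lambda_n$ is connected. Hence the connected silted algebras of type $\Lambda_n$ are exactly the tilted algebras of type $\Lambda_n$, yielding $\ca_s(\Lambda_n)=\ca_t(\Lambda_n)\sqcup\ca_{ns}(\Lambda_n)$.

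The main work is to classify the non-connected ones. Given a non-connected silted algebra $B=B_1\times\cdots\times B_r$ realized by a 2-term silting complex $M=M_1\oplus\cdots\oplus M_r$ over $\Lambda_n$ (with $M_i$ corresponding to the central idempotent giving $B_i$), I would analyze the summands using the description of indecomposables in $K^{[-1,0]}(\proj\Lambda_n)$: each summand is either an indecomposable $\Lambda_n$-module (an interval $[i,j]$ under the linear orientation) or a shifted indecomposable projective $P_i[1]$. A key observation is that in the linear orientation $\overrightarrow{\mathbb{A}}_n$ the shifted projectives must all lie in a single component, because $\Hom_{K^{b}(\proj\Lambda_n)}(P_i[1],P_j[1])=\Hom_{\Lambda_n}(P_i,P_j)\neq 0$ whenever $i\leq j$. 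A case analysis of the interactions between module summands and shifted projective summands, using the combinatorics of 2-term silting complexes over $\Lambda_n$ developed in the remainder of Section~\ref{s:silted-algebra-linear-orientation}, would show that any non-trivial orthogonal decomposition corresponds to a single ``cut'' of the underlying quiver $\overrightarrow{\mathbb{A}}_n$, splitting the vertex set into two consecutive blocks of sizes $m$ and $n-m$. Each $M_i$ then becomes a 2-term tilting complex over a copy of $\Lambda_{m_i}$, so that $B_i$ is tilted of type $\Lambda_{m_i}$ by Corollary~\ref{cor:endomorphism-algebra-of-2-term-tilting-is-tilted}. Summing over $m=1,\ldots,[n/2]$ and using the symmetric product to collapse the unordered pair $(B_1,B_2)$ yields the decomposition in~(a).

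For Part (b), the equality $a_s(\Lambda_n)=a_t(\Lambda_n)+a_{ns}(\Lambda_n)$ is immediate from (a). The formula for $a_{ns}(\Lambda_n)$ follows from Lemma~\ref{lem:symmetric-product}: when $m\neq n-m$, the sets $\ca_t(\Lambda_m)$ and $\ca_t(\Lambda_{n-m})$ are disjoint because their members have different numbers of simples, giving an ordinary product; when $n$ is even and $m=n/2$, part~(c) yields $|\ca_t(\Lambda_{n/2})\times_s\ca_t(\Lambda_{n/2})|=a_t(\Lambda_{n/2})(a_t(\Lambda_{n/2})+1)/2$. The formula for $a_t(\Lambda_n)$ rests on the Happel--Ringel bijection \cite[(4.1)]{HappelRingel81} between basic tilting modules over $\Lambda_n$ and elements of $\cq(n)$: since $|\cq(n)|=\tfrac{1}{n+1}\binom{2n}{n}$, and two tilting modules yield isomorphic tilted algebras exactly when the associated rooted quivers with relation are isomorphic as quivers with relation (forgetting the root), I would split $\cq(n)=\cq_h(n)\sqcup\cq_{nh}(n)$. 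By Corollary~\ref{cor:non-hereditary-QR} distinct members of $\cq_{nh}(n)$ remain distinct after forgetting the root, contributing $|\cq_{nh}(n)|=\tfrac{1}{n+1}\binom{2n}{n}-2^{n-1}$ classes. By Lemma~\ref{lem:hereditary-QR} the involution $R\mapsto\bar R$ on $\cq_h(n)$ has $2^{(n-1)/2}$ fixed points when $n$ is odd and none when $n$ is even, yielding $2^{n-2}+(1-(-1)^n)\cdot 2^{[n/2]-2}$ orbits. Adding the two contributions gives the stated formula.

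The principal obstacle is the structural step in Part~(a): ruling out silted algebras of $\Lambda_n$ with three or more connected components and pinpointing the ``cut'' form of the two-component ones. This depends on a careful analysis of Hom-vanishing in $K^{[-1,0]}(\proj\Lambda_n)$ exploiting the rigidity of the linear orientation (shifted projectives are totally Hom-ordered, and module summands are intervals whose $\Hom$ and $\Ext^1$ patterns are completely determined by the interval overlaps). Once this structural result is in hand, the counting in Part~(b) reduces to standard combinatorics via Lemmas~\ref{lem:symmetric-product} and~\ref{lem:hereditary-QR} together with the Happel--Ringel correspondence.
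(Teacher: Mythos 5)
There is a genuine gap, and it sits exactly at the step you yourself flag as the ``principal obstacle''. First, your treatment of the connected case does not work as stated: Theorem~\ref{thm: silted-algebras-to-tilted-and-shod-algebras} together with Theorem~\ref{thm:no-strictly-shod-algebra-of-type-A} only tells you that a connected silted algebra of type $\Lambda_n$ is tilted of type $\mathbb{A}_n$ for \emph{some} orientation, not that it is tilted of type $\Lambda_n$; the distinction is real, since for $\Gamma_n$ the paper exhibits connected silted algebras (the family $\cb_2$ in Theorem~\ref{thm:silted-algebras-of-Gamman}, nonempty for $n\geq 5$) that are tilted of type $\mathbb{A}_n$ but not of type $\Lambda_n$. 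The same problem recurs when you invoke Corollary~\ref{cor:endomorphism-algebra-of-2-term-tilting-is-tilted} to conclude that each block $B_i$ is tilted of type $\Lambda_{m_i}$: that corollary only yields a tilted algebra of some hereditary type derived equivalent to $\Lambda_{m_i}$, i.e.\ some orientation of $\mathbb{A}_{m_i}$. So neither inclusion in part (a) is actually established by the general results you cite.

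Second, the ingredient that would repair both points --- the explicit classification of basic 2-term silting complexes over $\Lambda_n$ (the paper's Proposition~\ref{prop:2-term-silting-complexes-of-Lambda_n}: every such complex is a tilting module, or $\tau^{-1}$ of a tilting module, or the direct sum of a tilting module in the wing of $P(i)$ and a tilting module in the wing of $P(i-1)[1]$) --- is precisely what your proposal defers with ``a case analysis \dots would show''. That classification is the heart of the paper's proof: from it one reads off that connected endomorphism algebras are $\End(T)$ or $\End(\tau^{-1}T)\cong\End(T)$ for tilting modules $T$, hence tilted of type $\Lambda_n$; that the third family gives $\End(M')\times\End(M'')$ with factors tilted of types $\Lambda_{n-i+1}$ and $\Lambda_{i-1}$ (using the Hom-vanishing between the two wings); and, via the easy converse direction, that \emph{every} pair in $\ca_t(\Lambda_m)\times_s\ca_t(\Lambda_{n-m})$ actually occurs, which you need for the equality (not merely an inclusion) in the displayed formula for $\ca_{ns}(\Lambda_n)$. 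Your part (b) is fine and coincides with the paper's counting (the Happel--Ringel bijection with $\cq(n)$, the involution on $\cq_h(n)$, Corollary~\ref{cor:non-hereditary-QR} and Lemma~\ref{lem:symmetric-product}), but it rests on a part (a) that your sketch has not yet proved.
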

This theorem will be proved in Section~\ref{ss:silted-algebras-type-Lambdan}. Note that as a consequence of (a), there are no strictly shod algebras in $\ca_s(\Lambda_n)$. 
The following table contains the first values of $a_t(\Lambda_n)$, $a_{ns}(\Lambda)$ and $a_s(\Lambda_n)$.

$$\begin{tabular}{|c||c|c|c|c|c|c|c|c|c|c| p{<20>}}
\hline
$n$ & $1$ & $2$ & $3$ & $4$ & $5$ & $6$ & $7$ & $8$ & $9$ & $10$\\
\hline
$a_t(\Lambda)$ & 1 & 1 & 4 & 10 & 36 & 116 & 401 & 1366 & 4742 & 16540 \\
\hline
$a_{ns}(\Lambda)$ & 0 & 1  & 1 & 5 & 14  & 56  & 192 & 716 & 2591 & 9538 \\
\hline
$a_s(\Lambda_n)$ & 1 & 2  & 5 & 15 & 50  & 172  & 593 & 2082 & 7333 & 26078\\
\hline
\end{tabular} $$

\subsection{Tilting modules over $\Lambda_n$}

In this subsection, we recall some facts on tilting modules over $\Lambda_n$. We first draw the Auslander--Reiten quiver of $\mod \Lambda_n$ in Figure~1. 
Here for a vertex $i$ of the quiver, we denote by $P(i)$, $I(i)$ and $S(i)$ the indecomposable projective, indecomposable injective and simple module at $i$, respectively. Note that $P(1)=I(n)$.
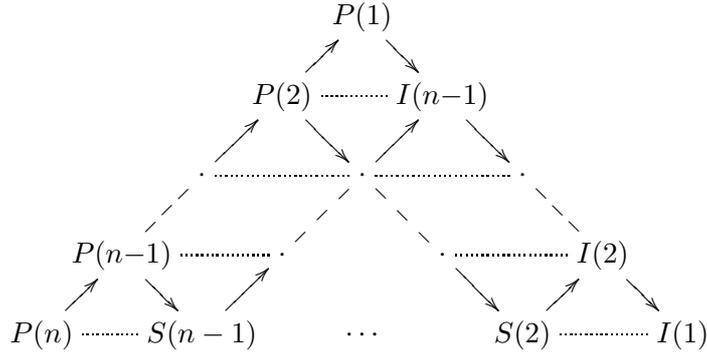
\begin{figure}
\label{fig:AR-quiver-of-Lambdan}
$\begin{xy}
0;<3pt,0pt>:<0pt,3pt>::
 (0,0)*+{P(1)}="1",
(-10,-10)*+{P(2)}="2",
(10,-10)*+{I(n{-}1)}="3",
(-20,-20)*+{\cdot}="4",
(0,-20)*+{\cdot}="5",
(20,-20)*+{\cdot}="6",
(-30,-30)*+{P(n{-}1)}="7",
(-10,-30)*+{\cdot}="8",
(10,-30)*+{\cdot}="9",
(30,-30)*+{I(2)}="10",
(-40,-40)*+{P(n)}="11",
(-20,-40)*+{S(n-1)}="12",
(0,-40)*+{\cdots}="13",
(20,-40)*+{S(2)}="14",
(40,-40)*+{I(1)}="15",
\ar"2";"1", \ar"2";"5", \ar@{.}"2";"3", \ar"5";"3", \ar"1";"3", \ar"4";"2", \ar@{.}"4";"5", \ar"3";"6", \ar@{.}"5";"6",
\ar@{--}"7";"4", \ar@{.}"7";"8", \ar@{--}"6";"10", \ar@{--}"5";"9", \ar"9";"14", \ar@{.}"9";"10", \ar@{--}"8";"5",
\ar"11";"7", \ar"7";"12", \ar"10";"15",\ar"14";"10", \ar"12";"8", \ar@{.}"11";"12",  \ar@{.}"14";"15",
\end{xy}$
\caption{The~{Auslander--Reiten}~quiver~of~$\mod \Lambda_n$}
\end{figure}

In this Auslander--Reiten quiver, the \emph{upper ray} (respectively, \emph{lower ray}) of an indecomposable module $M$ consists of $M$ and the indecomposable modules which can be reached from $M$ through upper (respectively, lower) slanted arrows only, and the \emph{hammock} starting at $M$ consists of indecomposable modules which lies in the rectangle expanded by the upper ray and the lower ray of $M$. For example, the hammock starting at a simple module $S(i)$ is exactly the upper ray of $S(i)$. These hammocks are described in the proof of Lemma (4.1) in \cite{HappelRingel81}. For the general theory on hammocks see \cite{RingelVossieck87}.

Notice that any tilting module over $\Lambda_n$ must contain $P(1)$ as a direct summand, because it is a projective-injective module. 
The following useful fact is a consequence of the first part of the proof of Lemma (4.1) in \cite{HappelRingel81}.

\begin{lemma}\label{lem:direct-summand-and-1-orbit}
\label{lem:direct-summand-and-j-orbit}
\label{lem:Ext-and-hammock}
Let $T$ be a basic tilting module over $\Lambda_n$ and $2\leq i\leq n$. Then $T$ has $P(i)$ as a direct summand if and only if $T$ has no direct summands in the hammock starting at $\tau^{-1}P(i)$.
\end{lemma}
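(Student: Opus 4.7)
The strategy is to convert the combinatorial hammock condition into an $\Ext^{1}$-vanishing condition via the Auslander--Reiten formula, and then apply the standard maximality of tilting modules.

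Since $2\leq i\leq n$, the projective $P(i)$ is not injective, so $\tau^{-1}P(i)$ is defined. For the representation-finite hereditary algebra $\Lambda_n$ the hammock starting at an indecomposable $N$ coincides with the support of $\Hom_{\Lambda_n}(N,-)$ on indecomposables (\confer \cite{RingelVossieck87}); taking $N=\tau^{-1}P(i)$ and combining with the Auslander--Reiten formula
\begin{align*}
\Ext^{1}_{\Lambda_n}(M,P(i))\cong D\Hom_{\Lambda_n}(\tau^{-1}P(i),M),\qquad D:=\Hom_{k}(-,k),
\end{align*}
which is valid because $P(i)$ is non-injective, yields the key equivalence: an indecomposable $M\in\mod\Lambda_n$ lies in the hammock starting at $\tau^{-1}P(i)$ if and only if $\Ext^{1}_{\Lambda_n}(M,P(i))\neq 0$.

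With this equivalence in hand, both directions are short. For the ``only if'' direction, if $P(i)\in\add(T)$ and some summand $M$ of $T$ lay in the hammock, then $\Ext^{1}_{\Lambda_n}(M,P(i))\neq 0$ would contradict the rigidity $\Ext^{1}_{\Lambda_n}(T,T)=0$ of the tilting module $T$. For the ``if'' direction, if no summand of $T$ lies in the hammock, then $\Ext^{1}_{\Lambda_n}(T,P(i))=0$ by the equivalence, and $\Ext^{1}_{\Lambda_n}(P(i),T)=0$ is automatic because $P(i)$ is projective. Hence $T\oplus P(i)$ is a partial tilting module, and since $T$ is already tilting with $|T|=|\Lambda_n|$, the standard upper bound $|T'|\leq |\Lambda_n|$ for partial tilting modules over a hereditary algebra forces $P(i)\in\add(T)$, \ie $P(i)$ is a summand of $T$.

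The only slightly delicate step is the identification of the hammock with the $\Hom$-support on indecomposables. For $\Lambda_n$ this reduces to the direct computation $\Hom_{\Lambda_n}(M[a,b],M[a',b'])\neq 0$ iff $a'\leq a\leq b'\leq b$ on the interval modules, which, applied to $\tau^{-1}P(i)=M[i-1,n-1]$, yields the expected rectangular hammock $\{M[a',b'] : a'\leq i-1\leq b'\leq n-1\}$; this matches the description via upper and lower rays in the AR quiver of Figure~1, so no other step is subtle.
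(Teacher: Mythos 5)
Your proof is correct. Note that the paper does not prove this lemma at all: it is quoted as a consequence of the first part of the proof of Lemma~(4.1) in \cite{HappelRingel81}, where the $\Ext$-vanishing pattern for the linearly oriented $\mathbb{A}_n$-quiver is worked out explicitly. Your argument is a clean, self-contained substitute for that citation: you identify the hammock starting at $\tau^{-1}P(i)$ with the $\Hom$-support of $\tau^{-1}P(i)$ on indecomposables (this is exactly the module-category instance of the identification the paper itself uses in Lemma~\ref{lem:derived-Hom-and-hammock}), translate it via the Auslander--Reiten formula $\Ext^1_{\Lambda_n}(M,P(i))\cong D\Hom_{\Lambda_n}(\tau^{-1}P(i),M)$ — which holds with ordinary (not stable) $\Hom$ because $\Lambda_n$ is hereditary, $\tau^{-1}P(i)$ is non-projective, and $P(i)$ is non-injective for $i\geq 2$ — and then use rigidity of $T$ for one implication and maximality of tilting modules (a partial tilting module has at most $|\Lambda_n|$ indecomposable summands, so $T\oplus P(i)$ rigid forces $P(i)\in\add T$) for the other. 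Compared with the paper's reliance on \cite{HappelRingel81}, your route costs only two standard facts (replacing stable $\Hom$ by $\Hom$ over a hereditary algebra, and the bound on summands of partial tilting modules) and gains independence from the specific computations in that reference; the interval bookkeeping such as $\tau^{-1}P(i)=M[i-1,n-1]$ is consistent with the paper's conventions (where $P(1)=I(n)$), and in any case only the hammock-equals-$\Hom$-support identification is actually needed.
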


%
%

\begin{remark}\label{rek:tilting-module-over-A-turn-to-B}
Let $T$ be a basic tilting module over $\Lambda_n$ containing $P(n)$ as a direct summand. By Lemma \ref{lem:direct-summand-and-1-orbit}, $T$ has no direct summand in the upper ray of $S(n-1)$. Write $T=P(n)\oplus T'$. Then $T^{\prime}$ can be viewed as a basic tilting module in the additive closure of all indecomposable modules which is neither $P(n)$ nor in the upper ray of $S(n-1)$. This additive closure is equivalent to $\mod \Lambda_{n-1}$.
\end{remark}

The following result is a consequence of Lemma~\ref{lem:direct-summand-and-j-orbit}. Inductively applying it, we can obtain a classification of basic tilting modules over $\Lambda_n$. Recall from \cite[3.3]{Ringel84} that for an indecomposable module $M$ over $\Lambda_n$, a full translation subquiver $\Gamma'$ of the AR-quiver $\Gamma$ of $\mod \Lambda_n$ is called the \emph{wing} of $M$, if for  $z$ of $\Gamma'$, all direct predecessors of $z$ in $\Gamma$ belong to $\Gamma'$, $\Gamma'$ is of the form of the AR-quiver of $\mod \Lambda_{m}$ for some $m$ and $M$ is the projective-injective vertex of $\Gamma'$. The following proposition is well-known.

\begin{proposition}
\label{prop:tilting-modules-of-Lambdan}
Let $T$ be a basic tilting module over $\Lambda_n$. Then $T$ is of one of the following two forms:
\begin{itemize}
\item[(1)] $T=P(1)\oplus T'$, where $T'$ is a basic tilting module of the wing of $P(2)$ or a basic tilting module of the wing of  $I(n{-}1)$;

\item[(2)] $T=P(1)\oplus T'\oplus T''$, where $T'$ is a basic tilting module of the wing of $P(i)$  for some $3\leq i\leq n$ and $T''$ is a basic tilting module of the wing of $I(i-2)$.
\end{itemize}
\end{proposition}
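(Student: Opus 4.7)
The plan is to use the hammock criterion of Lemma~\ref{lem:Ext-and-hammock} together with the combinatorics of the triangular Auslander--Reiten quiver of $\Lambda_n$. First I would observe that $P(1) = I(n)$ is indecomposable projective-injective, and so a direct summand of every basic tilting module $T$: this follows from a standard Ext-rigidity argument against the injective envelope combined with the count $|T|=n$. Writing $T = P(1) \oplus T_0$ with $|T_0| = n-1$, I would then analyze $T_0$ based on where its summands sit in the triangular AR-quiver.

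The case split is according to the two sets
\[
I_T = \{i \geq 3 : P(i) \mid T\}, \qquad J_T = \{j \leq n-2 : I(j) \mid T\}.
\]
If $I_T = J_T = \emptyset$, then Lemma~\ref{lem:Ext-and-hammock} applied to each $P(i)$ with $i\ge 3$, and an analogous argument for each $I(j)$ with $j\le n-2$, show that every summand of $T_0$ avoids the corresponding hammocks; the combined allowed positions form the union of the wings of $P(2)$ and $I(n-1)$, and a further Ext-rigidity argument confines $T_0$ to a single wing, giving case~(1). If instead $I_T \cup J_T \neq \emptyset$, then by the left--right symmetry of the AR-quiver (duality $D$ together with the isomorphism $\Lambda_n^{op}\cong \Lambda_n$ swapping $P(i)\leftrightarrow I(n+1-i)$) I may assume $I_T\neq\emptyset$. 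Setting $i_0 = \min I_T$, Lemma~\ref{lem:Ext-and-hammock} applied to $P(i_0)$ forbids summands in the hammock of $\tau^{-1}P(i_0)$, which is precisely the middle band of the AR-quiver separating the wing of $P(i_0)$ from the wing of $I(i_0-2)$. An analogous argument on the injective side then forces $i_0-2$ to be the maximal element of $J_T$, so that the remaining summands of $T_0$ split as a basic tilting module of the wing of $P(i_0)$ together with one of the wing of $I(i_0-2)$; this is case~(2), with total summand count $1 + (n-i_0+1) + (i_0-2) = n$ as required.

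The main obstacle is the explicit combinatorial identification of the hammock of $\tau^{-1}P(i_0)$ as the middle band separating the two wings, and the corresponding verification that the tilting constraints force the index matching $j_0 = i_0 - 2$. This rests on the description of the AR-translate $\tau$ and the hammock regions in type $\mathbb{A}$ worked out in \cite{HappelRingel81}, combined with Ext-rigidity of $T$ to rule out summands which would straddle the middle band. A secondary delicacy is the confinement step in case~(1): the wings of $P(2)$ and $I(n-1)$ overlap along an internal diagonal of the AR-quiver, so one must check that Ext-orthogonality rules out summands drawn from the non-overlapping parts of both wings simultaneously.
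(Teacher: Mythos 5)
Your second case breaks down, and the case split itself is the wrong dichotomy. Take $T=\Lambda_n=P(1)\oplus\cdots\oplus P(n)$ with $n\geq 3$. Then $I_T=\{3,\ldots,n\}\neq\emptyset$ and $i_0=3$, but no $I(j)$ with $j\leq n-2$ is a direct summand, so nothing ``forces $i_0-2$ to be the maximal element of $J_T$''; moreover $P(2)$ lies neither in the wing of $P(3)$ nor in the wing of $I(1)$, so the claimed splitting of $T_0$ fails, and this $T$ is of form (1), not (2). The root of the problem is your identification of the hammock starting at $\tau^{-1}P(i_0)$ with ``the middle band separating the two wings'': the complement of the union of the wings of $P(i_0)$ and $I(i_0-2)$ is that hammock \emph{together with} the projectives $P(1),\ldots,P(i_0-1)$, which the hammock does not reach. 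Hence Lemma~\ref{lem:Ext-and-hammock} applied to $P(i_0)$ says nothing about $P(2),\ldots,P(i_0-1)$, and since your $i_0$ is a minimum taken only over $i\geq 3$, the summand $P(2)$ is completely uncontrolled; whenever $P(2)$ is a summand the correct conclusion is case (1), with $T'$ in the wing of $P(2)$, no matter what $I_T$ is. (The dual error occurs when you argue from $J_T$.)

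In your first case the lemma is also used in the wrong direction: if $P(i)$ is \emph{not} a summand ($i\geq 3$), Lemma~\ref{lem:Ext-and-hammock} yields that $T$ \emph{does} have a summand in the hammock starting at $\tau^{-1}P(i)$, not that summands avoid it; and the intermediate claim is vacuous anyway, since the union of the wings of $P(2)$ and $I(n-1)$ is everything except $P(1)$. (The conclusion of that case is nevertheless correct via the Ext-argument you defer: a summand outside the wing of $P(2)$ must be $I(n-1)$, one outside the wing of $I(n-1)$ must be $P(2)$, and $\Ext^1(I(n-1),P(2))\cong D\Hom(P(2),\tau I(n-1))\neq 0$.) The repair, and the argument the paper intends when it calls the proposition a consequence of Lemma~\ref{lem:Ext-and-hammock} (compare the pivot in the proof of Proposition~\ref{prop:2-term-silting-complexes-of-Lambda_n}), is to set $i=\min\{\,i\geq 2 : P(i)\ \text{is a direct summand of}\ T\,\}$, allowing $i=2$: if $i=2$, the lemma excludes every $I(j)$ with $j\leq n-1$, so $T_0$ lies in the wing of $P(2)$; if $i\geq 3$, minimality excludes $P(2),\ldots,P(i-1)$ and the lemma excludes the hammock, so $T_0$ lies in the disjoint union of the wings of $P(i)$ and $I(i-2)$, and the count $(n-i+1)+(i-2)=n-1$ forces each part to be a tilting module of its wing; if no such $i$ exists, $T_0$ misses the projective border entirely and lies in the wing of $I(n-1)$.
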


\begin{remark} 
\label{rmk:tilting-modules-of-Lambdan}
Let $T$ be a basic tilting module over $\Lambda_n$ and $M$ be an indecomposable direct summand of $T$. Consider the indecomposable direct summands of $T$ which belong to the wing $\mathcal{W}$ of $M$. The direct sum of them must be a basic tilting module in $\mathcal{W}$, by repeatedly applying Proposition~\ref{prop:tilting-modules-of-Lambdan}.
\end{remark}

\begin{lemma} \label{lem:maximal-P(i)-to-relation}
Let $T$ be a basic tilting module over $\Lambda_n$ which contains $P(n)$ but not $P(n{-}1)$ as a direct summand. Assume that $T$ contains $P(i)$ as a direct summand for some $1\leq i\leq n{-}2$, and let $j$ be the maximal such $i$. Then $T$ must contain $\tau^{-2}P(j+2)$ as a direct summand but not $\tau^{-1}P(j+1)$.
\end{lemma}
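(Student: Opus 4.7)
The plan is to realise each indecomposable of $\Lambda_n$ as an interval module $M([a,b])$ with $1\le a\le b\le n$, so that $P(i)=M([i,n])$, $I(i)=M([1,i])$, $S(i)=M([i,i])$, and $\tau^{-1}M([a,b])=M([a-1,b-1])$ when $a\ge 2$. In particular $\tau^{-1}P(j+1)=M([j,n-1])$ lies in the upper ray of $S(n-1)$, while $\tau^{-2}P(j+2)=M([j,n-2])$. Combining the standard Hom-formula $\Hom(M([a,b]),M([c,d]))\ne 0$ iff $c\le a\le d\le b$ with the Auslander--Reiten isomorphism $\Ext^1(M,N)\cong D\Hom(N,\tau M)$ yields the Ext-criterion
\[
\Ext^1(M([a,b]),M([c,d]))\ne 0 \iff a+1\le c\le b+1\le d,
\]
which I will use throughout.

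The assertion that $\tau^{-1}P(j+1)$ is not a summand is immediate. The upper ray of $S(n-1)$ coincides with the hammock starting at $\tau^{-1}P(n)=S(n-1)$, so Lemma~\ref{lem:direct-summand-and-j-orbit} applied to the summand $P(n)$ says $T$ has no summand in that ray; in particular $M([j,n-1])=\tau^{-1}P(j+1)$ is not a summand.

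For the positive assertion I will invoke the maximality of tilting modules over the hereditary algebra $\Lambda_n$: an indecomposable $N$ with $\Ext^1(N,N)=0$ and $\Ext^1(N,T)=0=\Ext^1(T,N)$ must lie in $\add T$. I apply this to $N=M([j,n-2])$. Self-Ext vanishes since $j+1\le j$ fails. The Ext-criterion shows that $\Ext^1(N,M([c,d]))\ne 0$ forces $c\in[j+1,n-1]$ and $d\in\{n-1,n\}$: the case $d=n$ lists precisely $P(c)$ for $c\in[j+1,n-1]$, none of which is a summand by the maximality of $j$ together with $P(n-1)\notin\add T$, while the case $d=n-1$ lists the upper ray of $S(n-1)$ already excluded. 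Dually, $\Ext^1(M([c,d]),N)\ne 0$ forces $c\le j-1$ and $d\in[j-1,n-3]$; this is vacuous when $j=1$, and when $j\ge 2$ describes precisely the hammock of $\tau^{-1}P(j)=M([j-1,n-1])$, which by Lemma~\ref{lem:direct-summand-and-j-orbit} applied to the summand $P(j)$ contains no summand of $T$. Hence $N\in\add T$.

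The main obstacle is purely bookkeeping: one must correctly translate $\tau^{-1}P(j+1)$, $\tau^{-2}P(j+2)$, $\tau^{-1}P(n)$, and $\tau^{-1}P(j)$ into the interval picture, and then observe that the two Ext-non-vanishing regions for $M([j,n-2])$ are cut out exactly by the hypothesis $P(i)\notin\add T$ for $j<i<n$ together with the two applications of Lemma~\ref{lem:direct-summand-and-j-orbit} to the summands $P(n)$ and $P(j)$. After that, the verification is a line-by-line check.
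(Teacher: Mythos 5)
Your proof is correct, but it takes a genuinely different route from the paper. The paper restricts $T$ to the wing of $P(j)$: by Remark~\ref{rmk:tilting-modules-of-Lambdan} the summands of $T$ in that wing form a tilting module there, and since among the projectives of the wing only $P(j)$ and $P(n)$ occur, Proposition~\ref{prop:tilting-modules-of-Lambdan}(2) applied inside the wing forces $\tau^{-2}P(j+2)$ in and $\tau^{-1}P(j+1)$ out. You instead work with the interval-module combinatorics: your translation $P(i)=M([i,n])$, $\tau^{-1}M([a,b])=M([a-1,b-1])$, the Hom-criterion and the resulting Ext-criterion are all consistent with the paper's conventions (e.g.\ $\tau^{-1}P(n)=S(n-1)$), and you exclude every indecomposable $L$ with $\Ext^1(N,L)\neq 0$ or $\Ext^1(L,N)\neq 0$ from $\add T$ using the maximality of $j$, the hypothesis $P(n-1)\notin\add T$, and Lemma~\ref{lem:direct-summand-and-j-orbit} applied to $P(n)$ and (when $j\geq 2$) to $P(j)$; the conclusion then follows from the maximal-rigidity (Bongartz completion) property of tilting modules over a hereditary algebra, a standard fact the paper never needs to invoke. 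Two harmless imprecisions: the region $\{M([c,d]):c\leq j-1,\ j-1\leq d\leq n-3\}$ is a proper subset of the hammock of $\tau^{-1}P(j)$ (which allows $d$ up to $n-1$), not ``precisely'' that hammock, and likewise the $d=n-1$ modules form only part of the upper ray of $S(n-1)$; since in both cases you only need containment in a region free of summands of $T$, the argument stands. What the two approaches buy: the paper's wing argument is shorter because it leans on the classification machinery it has already set up and is the pattern reused throughout Section~4, whereas your argument is more self-contained at the level of Hom/Ext hammock calculations and avoids Proposition~\ref{prop:tilting-modules-of-Lambdan} and Remark~\ref{rmk:tilting-modules-of-Lambdan} entirely, at the cost of importing the maximal-rigidity fact.
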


\begin{proof}
Consider the wing $\mathcal{W}$ of $P(j)$. By Remark~\ref{rmk:tilting-modules-of-Lambdan}, $T$ contains as a direct summand a basic tilting module $T'$ in $\mathcal{W}$. Since $P(j)$ is the projective-injective object in $\mathcal{W}$ and all other projective objects except $P(n)$ are not direct summands of $T'$, it follows by Proposition~\ref{prop:tilting-modules-of-Lambdan}(2) applied to $\mathcal{W}$ that $T'$, and hence $T$, must contain $\tau^{-2}P(j+2)$ as a direct summand but not $\tau^{-1}P(j+1)$.
\end{proof}

Finally, we have the following formula for the number $t(\Lambda_n)$ of isoclasses of basic tilting modules over $\Lambda_n$.
\begin{proposition}[{\cite[Theorem 1]{ObaidNaumanFakiehRingel14}}]
\label{prop:number-of-tilting-modules-over-An}
$
t(\Lambda_n)=\frac{1}{n+1}(\begin{smallmatrix}2n\\n\end{smallmatrix}).
$
\end{proposition}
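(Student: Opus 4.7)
The plan is to prove the formula inductively using Proposition~\ref{prop:tilting-modules-of-Lambdan}, extracting from it a recursion for $t(\Lambda_n)$ that coincides with the recurrence for the Catalan numbers $C_n=\frac{1}{n+1}\binom{2n}{n}$. The base cases are immediate: $t(\Lambda_0)=1$ (only the zero module) and $t(\Lambda_1)=1$ (only $P(1)$).

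The first step is to read off the sizes of the wings. Using the standard realisation of indecomposable $\Lambda_n$-modules as intervals $[a,b]$ with $1\leq a\leq b\leq n$, one has $P(i)=[i,n]$ and $I(j)=[1,j]$, so the wing of $P(i)$ consists of all subintervals of $[i,n]$ and is isomorphic, as a translation quiver, to the AR-quiver of $\mod\Lambda_{n-i+1}$; similarly the wing of $I(j)$ is isomorphic to that of $\mod\Lambda_{j}$. By Remark~\ref{rmk:tilting-modules-of-Lambdan}, the basic tilting modules living inside these wings are counted by $t(\Lambda_{n-i+1})$ and $t(\Lambda_{j})$, respectively.

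Proposition~\ref{prop:tilting-modules-of-Lambdan} partitions the basic tilting modules over $\Lambda_n$ into families indexed by a ``splitting projective'' $P(i)$ for $i\in\{2,3,\ldots,n+1\}$ (with $i=2$ and $i=n+1$ corresponding to the two sub-cases of~(1), and $3\leq i\leq n$ to case~(2)). Summing the counts from each family yields
\[
t(\Lambda_n)=\sum_{i=2}^{n+1}t(\Lambda_{n-i+1})\,t(\Lambda_{i-2}),
\]
which after substituting $k=i-2$ becomes the Catalan recurrence
\[
t(\Lambda_n)=\sum_{k=0}^{n-1}t(\Lambda_k)\,t(\Lambda_{n-1-k})
\]
(using $t(\Lambda_0)=1$, with the empty wing at $i=2$ or $i=n+1$ contributing a factor of $t(\Lambda_0)=1$). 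Since the Catalan numbers satisfy the same recursion with identical initial data, induction on $n$ concludes $t(\Lambda_n)=C_n$.

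The main subtlety I foresee lies in verifying that the classification in Proposition~\ref{prop:tilting-modules-of-Lambdan} is both exhaustive and exclusive: disjointness will follow from the observation that each case forces specific wing projective-injectives (namely $P(2)$, $I(n-1)$, or the pair $\{P(i),I(i-2)\}$) to appear as direct summands of $T$, and a short support and $\Ext$-calculation shows that these forced summands cannot coexist across different cases. An alternative strategy, perhaps closer in spirit to the proof in \cite{ObaidNaumanFakiehRingel14}, would be to construct an explicit bijection between basic tilting modules over $\Lambda_n$ and triangulations of the convex $(n+2)$-gon, recovering the Catalan count directly.
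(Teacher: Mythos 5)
The paper offers no proof of this proposition at all: it is quoted verbatim from \cite[Theorem 1]{ObaidNaumanFakiehRingel14}, whose argument counts support-tilting modules for all Dynkin types. Your proposal is therefore a genuinely different (and self-contained) route: you extract the Catalan recursion $t(\Lambda_n)=\sum_{k=0}^{n-1}t(\Lambda_k)t(\Lambda_{n-1-k})$ directly from Proposition~\ref{prop:tilting-modules-of-Lambdan}, which is legitimate since that proposition precedes the statement in the paper, and the identification of the wing of $P(i)$ (resp.\ $I(i-2)$) with $\mod\Lambda_{n-i+1}$ (resp.\ $\mod\Lambda_{i-2}$) is exactly how the paper itself uses wings elsewhere (Remark~\ref{rek:tilting-module-over-A-turn-to-B}, Lemma~\ref{lem:tilted-algebra-with-P2}). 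The recursion and the base cases check out against the paper's table ($1,2,5,14,42,\dots$), so the approach is sound; what it buys is independence from the external reference, at the cost of re-deriving a known count.

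One step should be made explicit, because as written your count only gives an upper bound for each family. Proposition~\ref{prop:tilting-modules-of-Lambdan} says every basic tilting module has the stated shape; your "exhaustive and exclusive" discussion handles disjointness of the families (correctly, via the forced summands $P(2)$, $I(n-1)$, or the pair $P(i)$, $I(i-2)$), but to get the product $t(\Lambda_{n-i+1})\,t(\Lambda_{i-2})$ for the family indexed by $i$ you also need the converse: for \emph{every} choice of a basic tilting module $T'$ in the wing of $P(i)$ and $T''$ in the wing of $I(i-2)$, the module $P(1)\oplus T'\oplus T''$ is again tilting over $\Lambda_n$. This is easy but not automatic from the proposition: the two wings are Serre subcategories whose supports are disjoint intervals separated by one vertex, so $\Hom$ and $\Ext^1$ vanish between them in both directions (any extension of a module supported on one interval by a module supported on the other splits), $\Ext^1$ computed inside a wing agrees with $\Ext^1$ in $\mod\Lambda_n$, and $P(1)$ is projective-injective; hence the direct sum is rigid with $n$ non-isomorphic indecomposable summands over a hereditary algebra, so it is tilting. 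With that sentence added, your induction is complete.
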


\subsection{Tilted algebras of type $\Lambda_n$}
In this subsection, we study tilted algebras of type $\Lambda_n$.  
Let $\ct(\Lambda_n)$ be the set of isoclasses of basic tilting modules over $\Lambda_n$, and let $\varepsilon\colon\ct(\Lambda_n)\to\ca_t(\Lambda_n)$ be the map of taking the endomorphism algebra.

To each $T\in\ct(\Lambda_n)$ we associate a rooted quiver with relation $\varepsilon_1(T)\in\cq(n)$ as follows. First, $\varepsilon_1(T)$ contains the root of the genealogical tree (which corresponds to the direct summand $P(1)$ of T). Next, if $T=P(1)\oplus T'$ with $T'$ a basic tilting module in the wing of $P(2)$, then $\varepsilon_1(T)$ contains the left lower neighbour of the root; if $T=P(1)\oplus T'$ with $T'$ a basic tilting module in the wing of $I(n-1)$, then $\varepsilon_1(T)$ contains the right lower neighbour of the root; otherwise, $\varepsilon_1(T)$ contains both the left lower neighbour and the right lower neighbour of the root. Then we proceed by induction, thanks to Propostion~\ref{prop:tilting-modules-of-Lambdan}. Part (a) of the following lemma is a reformulation of the main result of the second part of \cite[(4.1)]{HappelRingel81}. Part (b) follows from Lemma~\ref{lem:hereditary-QR} (b) and Corollary~\ref{cor:non-hereditary-QR}. Part (c) follows from the construction of $\varepsilon_1(T)$.

\begin{lemma}\label{lem:genealogical-tree}
\begin{itemize}
\item[(a)] The map $\varepsilon_1\colon\ct(\Lambda_n)\to\cq(n)$ is bijective. Moreover, $\varepsilon$ is the composition of $\varepsilon_1$ with the map of taking a rooted quiver with relation to its path algebra.
Consequently, a tilted algebra of type $\Lambda_n$ must be the path algebra of a rooted quiver with relation in $\cq(n)$.
\item[(b)]  
For any $B\in\ca_t(\Lambda_n)$, the fibre $\varepsilon^{-1}(B)$ consists of either one element or two elements, and $\varepsilon^{-1}(B)$ consists of one element if $B$ is non-hereditary.
\item[(c)] Let $T\in\ct(\Lambda_n)$. If $T=P(1)\oplus T'$ with $T'$ a basic tilting module in the wing of $P(2)$, then the root is a sink of $\varepsilon_1(T)$; if $T=P(1)\oplus T'$ with $T'$ a basic tilting module in the wing of $I(n-1)$, then the root is a source of $\varepsilon_1(T)$; otherwise, the root of $\varepsilon_1(T)$ lies on a relation and has two neighbours.
\end{itemize}
\end{lemma}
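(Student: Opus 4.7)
The plan is to handle the three parts in the order given, with part (a) carrying the main content and parts (b) and (c) following essentially formally. For part (a), I would induct on $n$. The base case $n=1$ is immediate: $\ct(\Lambda_1) = \{P(1)\}$, $\cq(1) = \{\bullet\}$, and $k = \End(P(1))$ is the path algebra of the one-vertex quiver. For the inductive step, given $T\in\ct(\Lambda_n)$, Proposition~\ref{prop:tilting-modules-of-Lambdan} splits $T$ as $P(1)\oplus T'$ or $P(1)\oplus T'\oplus T''$ with each non-$P(1)$ summand a basic tilting module in a wing. Each such wing is equivalent to $\mod\Lambda_m$ for some $m<n$, so the induction hypothesis identifies those tilting modules with rooted quivers with relation that can be glued to the appropriate lower neighbours of the root of the genealogical tree, producing a well-defined $\varepsilon_1(T)\in\cq(n)$. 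The same case analysis runs in reverse: for any $R\in\cq(n)$, Lemma~\ref{lem:vertices-lying-on-a-relation}(a) tells us which lower neighbour(s) of the root are present, this pins down which case of Proposition~\ref{prop:tilting-modules-of-Lambdan} to invoke, and induction builds compatible tilting modules inside the wings. This gives bijectivity of $\varepsilon_1$. The identification $\varepsilon(T)\cong k\varepsilon_1(T)$ then follows from computing irreducible maps between the indecomposable summands of $T$ and showing that the only relations between them are the composites $\alpha\beta$ of two such maps through a vertex lying on a relation of the genealogical tree.

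For part (b), the fibre $\varepsilon^{-1}(B)$ corresponds via the bijection $\varepsilon_1$ of part (a) to the set of $R\in\cq(n)$ whose underlying quiver with relation is isomorphic to the Gabriel quiver with relations of $B$. If $B$ is non-hereditary, Corollary~\ref{cor:non-hereditary-QR} prohibits any such coincidence between distinct elements of $\cq_{nh}(n)$, so $|\varepsilon^{-1}(B)| = 1$. If $B$ is hereditary, then any coincidence lies inside $\cq_h(n)$, and by Lemma~\ref{lem:hereditary-QR}(b) the only possible identification is $R' = \bar R$, bounding $|\varepsilon^{-1}(B)|$ by two; nonemptiness of the fibre is automatic since $B\in\ca_t(\Lambda_n)$. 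Part (c) is a direct reading of the construction: in the first case of Proposition~\ref{prop:tilting-modules-of-Lambdan}, only the left lower neighbour of the root is attached, so the unique arrow incident to the root (namely the one labelled $\beta$ in the genealogical tree) points into the root, making it a sink; in the second case only the right lower neighbour is attached, producing a single outgoing arrow $\alpha$ and hence a source; in the third case both neighbours are attached, giving two incident arrows together with the relation $\alpha\beta = 0$ inherited from the genealogical tree.

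The main obstacle lies in part (a), specifically the verification that $\varepsilon(T)$ is exactly the path algebra with no extra relations beyond the inherited $\alpha\beta = 0$ relations. This is the part where one must commit to a hammock analysis: irreducible maps inside a wing match arrows of $\cq(m)$ inductively, and one must check that no higher relations appear between summands lying in different wings and that the only composable arrows which compose to zero are those forced by the genealogical tree. Fortunately this is precisely the content of the second part of \cite[(4.1)]{HappelRingel81}, and once that identification is available the remaining statements in (b) and (c) reduce to the combinatorics of Section~\ref{s:rooted-quivers-with-relation} already established.
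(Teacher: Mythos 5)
Your proposal matches the paper's treatment: the paper constructs $\varepsilon_1$ by the same induction via Proposition~\ref{prop:tilting-modules-of-Lambdan}, defers the identification of $\End(T)$ with the path algebra (no extra relations) to the second part of \cite[(4.1)]{HappelRingel81}, and derives (b) from Lemma~\ref{lem:hereditary-QR}(b) and Corollary~\ref{cor:non-hereditary-QR} and (c) from the construction, exactly as you do. No gaps beyond the level of detail the paper itself leaves to the cited reference.
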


Let $a_{ht}(\Lambda_n)$ be the number of isoclasses of basic hereditary tilted algebras of type $\Lambda_n$.
It follows from Lemma~\ref{lem:hereditary-QR} and Lemma~\ref{lem:genealogical-tree} that the path algebra of any quiver of type $\mathbb{A}_n$ is tilted of type $\Lambda_n$ and they are exactly all the basic hereditary tilted algebra of type $\Lambda_n$.  For $n=1,2$, $a_{ht}(\Lambda_n)=1$.

\begin{proposition}\label{prop:hereditary-tilted-algebras}
We have
\begin{align*}
\label{equ:hereditary-tilted}
a_{ht}(\Lambda_n)&=2^{n-2}+(1-(-1)^{n})\times 2^{[\frac{n}{2}]-2}
=\begin{cases}
2^{n-2}, & \text{if $n$ is even},\\ 
2^{n-2}+2^{\frac{n-3}{2}}, & \text{if $n$ is odd}.
\end{cases}
\end{align*}
\end{proposition}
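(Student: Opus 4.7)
The plan is to translate the count into an orbit count for the involution $R \mapsto \bar R$ on $\cq_h(n)$ supplied by Lemma~\ref{lem:hereditary-QR}(b), using the bijection $\varepsilon_1 \colon \ct(\Lambda_n) \iso \cq(n)$ of Lemma~\ref{lem:genealogical-tree}(a).

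First, I would identify which rooted quivers with relation correspond to hereditary tilted algebras. Since $\varepsilon$ factors as $\varepsilon_1$ followed by taking path algebras, and a path algebra of an element of $\cq(n)$ is hereditary if and only if its relation set is trivial, the hereditary tilted algebras of type $\Lambda_n$ are exactly the path algebras of elements of $\cq_h(n)$. As recalled in the paragraph preceding the proposition, these are precisely the path algebras of quivers of type $\mathbb{A}_n$.

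Second, I would determine the fibres of $\varepsilon$ restricted to $\cq_h(n)$. By Lemma~\ref{lem:hereditary-QR}(b), two distinct elements $R, R'$ of $\cq_h(n)$ have isomorphic path algebras iff $R' = \bar R$, so $a_{ht}(\Lambda_n)$ equals the number of orbits of the involution $R \mapsto \bar R$ on $\cq_h(n)$. Writing $f(n)$ for the number of fixed points of this involution, the standard orbit count gives
\[
a_{ht}(\Lambda_n) \;=\; f(n) + \frac{|\cq_h(n)| - f(n)}{2} \;=\; \frac{|\cq_h(n)| + f(n)}{2}.
\]
Plugging in $|\cq_h(n)| = 2^{n-1}$ from Lemma~\ref{lem:hereditary-QR}(a) yields $a_{ht}(\Lambda_n) = 2^{n-2} + f(n)/2$.

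Finally, Lemma~\ref{lem:hereditary-QR}(c) gives $f(n) = 2^{(n-1)/2}$ when $n$ is odd and $f(n) = 0$ when $n$ is even; a short case check shows that $f(n)/2$ coincides with $(1-(-1)^n)\cdot 2^{[n/2]-2}$ in both parities (for odd $n$ one has $[n/2]=(n-1)/2$, so $(1-(-1)^n)\cdot 2^{[n/2]-2} = 2\cdot 2^{(n-1)/2 - 2} = 2^{(n-3)/2} = f(n)/2$), which completes the proof. The argument is pure bookkeeping, so there is no genuine obstacle once Lemma~\ref{lem:hereditary-QR} is in hand; the only point requiring any attention is the identification of the equivalence relation on $\cq_h(n)$ induced by the path-algebra functor with the involution $R \mapsto \bar R$, which is exactly what Lemma~\ref{lem:hereditary-QR}(b) supplies.
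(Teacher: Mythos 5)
Your proof is correct and follows essentially the same route as the paper: the paper also counts hereditary tilted algebras via Lemma~\ref{lem:genealogical-tree}(a) and Lemma~\ref{lem:hereditary-QR}, computing the number of orbits of the root--leaf involution on $\cq_h(n)$ (with $2^{n-1}$ elements and $2^{\frac{n-1}{2}}$ or $0$ fixed points according to parity), which is exactly your formula $\frac{|\cq_h(n)|+f(n)}{2}$.
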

\begin{proof}
First the equality holds for $n=1$. Next, assume $n\geq 2$. 
By Lemma~\ref{lem:genealogical-tree} (a) and Lemma~\ref{lem:hereditary-QR}, we have when $n$ is even
\[
a_{ht}(\Lambda_n)=\frac{2^{n-1}}{2}=2^{n-2},
\]
and when $n$ is odd
\[
a_{ht}(\Lambda_n)=\frac{2^{n-1}-2^{\frac{n-1}{2}}}{2}+2^{\frac{n-1}{2}}=2^{n-2}+2^{\frac{n-3}{2}}.\qedhere
\]
\end{proof}


Now we can give the formula of $a_{t}(\Lambda_n)$, the number of isoclasses of basic tilted algebras of type $\Lambda_n$. Let $a_{nht}(\Lambda_n)$ be the number of isoclasses of basic non-hereditary tilted algebras of type $\Lambda_n$.

\begin{proposition}\label{prop:the-number-of-non-hereditary-tilted-algebras}
We have
\begin{align*}
a_{nht}(\Lambda_n)&=\frac{1}{n+1}(\begin{smallmatrix}2n\\n\end{smallmatrix})-2^{n-1},\\
a_{t}(\Lambda_n)&=\frac{1}{n+1}(\begin{smallmatrix}2n\\n\end{smallmatrix})+[1-(-1)^{n}]\times 2^{[\frac{n}{2}]-2}-2^{n-2}.
\end{align*}
\end{proposition}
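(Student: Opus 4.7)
The plan is to count non-hereditary and hereditary tilted algebras separately and add them, exploiting the bijection $\varepsilon_1\colon\ct(\Lambda_n)\to\cq(n)$ from Lemma~\ref{lem:genealogical-tree}(a). The total count of tilting modules is $t(\Lambda_n)=\frac{1}{n+1}\binom{2n}{n}$ by Proposition~\ref{prop:number-of-tilting-modules-over-An}. Since $\cq(n)=\cq_h(n)\sqcup\cq_{nh}(n)$ and $|\cq_h(n)|=2^{n-1}$ by Lemma~\ref{lem:hereditary-QR}(a), we have
\[
|\cq_{nh}(n)|=\frac{1}{n+1}\binom{2n}{n}-2^{n-1}.
\]

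For the non-hereditary count, I would observe that $\varepsilon_1$ restricts to a bijection $\ct_{nh}(\Lambda_n)\iso\cq_{nh}(n)$ between non-hereditary tilting modules and non-hereditary rooted quivers with relation, because hereditary tilted algebras come precisely from rooted quivers with trivial relation. By Corollary~\ref{cor:non-hereditary-QR}, distinct elements of $\cq_{nh}(n)$ are non-isomorphic as quivers with relation, hence their path algebras are non-isomorphic. Conversely, by Lemma~\ref{lem:genealogical-tree}(b), the fibre of $\varepsilon$ over any non-hereditary tilted algebra has exactly one element. Combining these, the map $\cq_{nh}(n)\to\ca_{nht}(\Lambda_n)$ taking a rooted quiver with relation to its path algebra is a bijection, so
\[
a_{nht}(\Lambda_n)=|\cq_{nh}(n)|=\frac{1}{n+1}\binom{2n}{n}-2^{n-1}.
\]

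Finally, $a_t(\Lambda_n)=a_{ht}(\Lambda_n)+a_{nht}(\Lambda_n)$, and substituting the formula for $a_{ht}(\Lambda_n)$ from Proposition~\ref{prop:hereditary-tilted-algebras}, namely $a_{ht}(\Lambda_n)=2^{n-2}+(1-(-1)^n)\cdot 2^{[n/2]-2}$, yields
\[
a_t(\Lambda_n)=\frac{1}{n+1}\binom{2n}{n}+(1-(-1)^n)\cdot 2^{[n/2]-2}-2^{n-2},
\]
after the cancellation $2^{n-2}-2^{n-1}=-2^{n-2}$. There is no serious obstacle here: all the work has been front-loaded into the combinatorial analysis of $\cq(n)$ (the bijection $\varepsilon_1$, the split into $\cq_h(n)$ and $\cq_{nh}(n)$, and the rigidity statement of Corollary~\ref{cor:non-hereditary-QR}) and into Proposition~\ref{prop:number-of-tilting-modules-over-An}. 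The only mild point to check carefully is that $\varepsilon_1$ does indeed send the hereditary tilting modules bijectively onto $\cq_h(n)$, which follows directly from Lemma~\ref{lem:genealogical-tree}(a) together with the fact that the path algebra of a rooted quiver with relation is hereditary iff the relation is trivial.
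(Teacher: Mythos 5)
Your proposal is correct and follows essentially the same route as the paper: identify $a_{nht}(\Lambda_n)$ with $|\cq_{nh}(n)|=|\cq(n)|-|\cq_h(n)|$ via the bijection $\varepsilon_1$ and the fact that non-hereditary tilted algebras correspond bijectively to elements of $\cq_{nh}(n)$ (Lemma~\ref{lem:genealogical-tree}, Lemma~\ref{lem:hereditary-QR}, Corollary~\ref{cor:non-hereditary-QR}), then add $a_{ht}(\Lambda_n)$ from Proposition~\ref{prop:hereditary-tilted-algebras}. Your extra remarks on injectivity/surjectivity of the path-algebra map just spell out what the paper packages into Lemma~\ref{lem:genealogical-tree}(b).
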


\begin{proof} 
By Proposition~\ref{prop:number-of-tilting-modules-over-An} and Lemma~\ref{lem:genealogical-tree} (a), we have $|\cq(n)|=t(\Lambda_n)=\frac{1}{n+1}(\begin{smallmatrix}2n\\n\end{smallmatrix})$. By Lemma~\ref{lem:genealogical-tree} (b) and Lemma~\ref{lem:hereditary-QR} (a), we have
\[
a_{nht}(\Lambda_n)=|\cq_{nh}(n)|=|\cq(n)|-|\cq_h(n)|=\frac{1}{n+1}(\begin{smallmatrix}2n\\n\end{smallmatrix})-2^{n-1}.
\]
This in conjunction with Proposition~\ref{prop:hereditary-tilted-algebras} implies that
\[
a_{t}(\Lambda_n)=a_{ht}(\Lambda_n)+a_{nht}(\Lambda_n)=\frac{1}{n+1}(\begin{smallmatrix}2n\\n\end{smallmatrix})+[1-(-1)^{n}]\times 2^{[\frac{n}{2}]-2}-2^{n-2}.\qedhere
\]
\end{proof}

We believe that Proposition~\ref{prop:hereditary-tilted-algebras} and Proposition~\ref{prop:the-number-of-non-hereditary-tilted-algebras} are well-known, but unfortunately we fail to find them in the literature.

\subsubsection{Some subsets of $\ca_t(\Lambda_n)$}

Let $\ct^1(\Lambda_n)$ denote the set of isoclasses of  basic tilting modules over $\Lambda_n$ which contains $P(n)$ as a direct summand, $\ca_t^1(\Lambda_n)=\{\End(T)\mid T\in\ct^1(\Lambda_n)\}$, and $a_t^1(\Lambda_n)=|\ca_t^1(\Lambda_n)|$. Then $a_t^1(\Lambda_1)=1$.

\begin{lemma}
\label{lem:tilting-modules-and-tilted-algebras-with-P(n)}
\begin{itemize}
\item[(a)] The bijection $\varepsilon_1$ in Lemma~\ref{lem:genealogical-tree} (a) restricts to a bijection $\ct^1(\Lambda_n)\to\cq^1(n)$, with $P(n)$ corresponding to the vertex $u$.
\item[(b)] $\varepsilon\colon\ct^1(\Lambda_n)\to\ca_t^1(\Lambda_n)$ is bijective.
\item[(c)] For $n\geq 2$, we have $a_t^1(\Lambda_n)=t(\Lambda_{n-1})$.
\end{itemize}
\end{lemma}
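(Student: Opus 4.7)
The plan is to handle the three parts consecutively.

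For (a), I argue by induction on $n$, with the base case $n=1$ being immediate since $\cq^1(1)=\cq(1)$ and the unique projective $P(1)=P(n)$ is a summand of every tilting module. For the induction step, I use Proposition~\ref{prop:tilting-modules-of-Lambdan} to decompose $T$ either as $T=P(1)\oplus T'$ with $T'$ a basic tilting module in the wing of $P(2)$ or of $I(n-1)$, or as $T=P(1)\oplus T'\oplus T''$ with $T'$ in the wing of $P(i)$ and $T''$ in the wing of $I(i-2)$ for some $3\leq i\leq n$. The key observations are: first, $P(n)$ lies in the wing of $P(j)$ for every $2\leq j\leq n$ but in no wing of the form wing of $I(j)$, so $P(n)\in T$ is equivalent to $P(n)\in T'$, and is impossible when $T'$ lies in the wing of $I(n-1)$; second, by the recursive construction of $\varepsilon_1$ the rooted quiver $\varepsilon_1(T')$ is embedded with its root at the left-child of the root of $\varepsilon_1(T)$, so the left border of $\varepsilon_1(T)$ is obtained by prepending the root to the left border of $\varepsilon_1(T')$. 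Applying the inductive hypothesis to $T'$ as a tilting module of $\mod \Lambda_{n-i+1}$ (the wing of $P(i)$), and noting that the outgoing arrows at the source of the left border are unchanged by the embedding, both implications follow. Tracking the recursion down to its base case (the one-vertex wing of $P(n)$) identifies $u$ with $P(n)$.

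For (b), $\varepsilon$ factors as the bijection $\varepsilon_1$ followed by the map from $\cq(n)$ to $\ca_t(\Lambda_n)$ sending a rooted quiver with relation to its path algebra. By Corollary~\ref{cor:non-hereditary-QR} this latter map is injective on non-hereditary rooted quivers with relation, and by Lemma~\ref{lem:genealogical-tree}(b) the only potential source of non-injectivity on the hereditary part is the involution $R\mapsto \bar R$. I will verify that $\cq^1(n)\cap\cq_h(n)$ contains exactly one element, namely the chain with $n$ vertices obtained by descending down-left from the root, by noting that a hereditary $R\in\cq_h(n)$ is a path starting at the root and that the two defining conditions of $\cq^1(n)$ force each step to go down-left. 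Its mate $\bar R$ embeds as the chain descending down-right and hence has left border of length one, so $\bar R\notin\cq^1(n)$. This yields injectivity of $\varepsilon|_{\ct^1(\Lambda_n)}$, and surjectivity onto $\ca_t^1(\Lambda_n)$ holds by definition.

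For (c), combining (a) and (b) gives $a_t^1(\Lambda_n)=|\cq^1(n)|$, and Lemma~\ref{lem:genealogical-tree}(a) gives $t(\Lambda_{n-1})=|\cq(n-1)|$, so it suffices to exhibit a bijection $\cq^1(n)\to\cq(n-1)$. The natural candidate $R\mapsto R\setminus\{u\}$ is well-defined because $u$ has its parent on the left border as its unique neighbour in $R$ (by the $\cq^1$-condition), so deletion preserves connectedness and no relation of the genealogical tree involving $u$ survives in the inherited relations. The inverse sends $R'\in\cq(n-1)$ to $R'\cup\{u\}$, where $u$ is the left-child in the genealogical tree of the source of the left border of $R'$; the crucial check is that $u$'s right-child cannot already belong to $R'$, which follows because any path from the root to $u$'s right-child in the genealogical tree must pass through $u$.

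The main obstacle is the ``$P(n)\notin T$'' direction of the induction step in (a): here one must dichotomise on whether the left border of $\varepsilon_1(T')$ already has length at least two (in which case its source inherits too many outgoing arrows from the inductive hypothesis) or the left-child $v$ of the root of $\varepsilon_1(T)$ has no left-child in $\varepsilon_1(T')$ (in which case $v$ necessarily has a right-child in $\varepsilon_1(T')$ because $|T'|\geq 2$, so $v$ acquires a second outgoing arrow in $\varepsilon_1(T)$ besides the one going to the root).
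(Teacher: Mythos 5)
Your proposal is correct, and it reaches the three statements by a route that is parallel to, but in places genuinely different from, the paper's. For (a) the paper argues directly: the vertices of $\varepsilon_1(T)$ corresponding to the projective summands of $T$ form the left border, and by Remark~\ref{rmk:tilting-modules-of-Lambdan} only the vertex corresponding to $P(n)$ can be a source of the left border with exactly one neighbour (for $j<n$ the restriction of $T$ to the wing of $P(j)$ has at least two summands, so the $P(j)$-vertex acquires a child). Your induction along the recursive construction of $\varepsilon_1$ unwinds exactly this, and your final dichotomy (left border of $\varepsilon_1(T')$ of length $\geq 2$ versus the root of $\varepsilon_1(T')$ having only a right child) is precisely the point where a naive ``the embedding changes nothing'' argument would fail, so it is good that you isolated it; you should also record explicitly that in the case $T'$ lies in the wing of $I(n-1)$ the left border of $\varepsilon_1(T)$ is the root alone, which settles that case. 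Part (b) is essentially the paper's proof (unique hereditary element of $\cq^1(n)$, namely the all--down-left chain, plus Corollary~\ref{cor:non-hereditary-QR}); the appeal to the involution $R\mapsto\bar R$ is really Lemma~\ref{lem:hereditary-QR}(b) rather than Lemma~\ref{lem:genealogical-tree}(b), and is in fact redundant once the hereditary part of $\cq^1(n)$ is known to be a singleton. The real divergence is (c): the paper gets $a_t^1(\Lambda_n)=|\ct^1(\Lambda_n)|=t(\Lambda_{n-1})$ from (b) together with Remark~\ref{rek:tilting-module-over-A-turn-to-B} (deleting the summand $P(n)$ lands in a subcategory equivalent to $\mod\Lambda_{n-1}$), whereas you construct a purely combinatorial bijection $\cq^1(n)\to\cq(n-1)$ by deleting the vertex $u$, with the tree argument (any connected subgraph of a tree containing the root and the right child of $u$ must contain $u$) as the key check for the inverse; this is correct and self-contained on the combinatorial side, and has the small bonus of making the correspondence $u\leftrightarrow P(n)$ transparent, at the cost of redoing in $\cq$-language what Remark~\ref{rek:tilting-module-over-A-turn-to-B} already provides module-theoretically.
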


\begin{proof}
(a) For $T\in\ct(\Lambda_n)$, the vertices of $\varepsilon_1(T)$ which correspond to indecomposable projective direct summands of $T$ form the left border of $\varepsilon_1(T)$. Among them, the vertex corresponding to $P(n)$ is the only possible vertex which is a source and has exactly one neighbour, by Remark~\ref{rmk:tilting-modules-of-Lambdan}. 

(b) Observe that elements of $\cq^1(n)$ are all non-hereditary except one: $\overrightarrow{\mathbb{A}_{n}}$ with the unique sink as the root, which corresponds to $\Lambda_n=P(1)\oplus\ldots\oplus P(n)$. It follows from Corollary~\ref{cor:non-hereditary-QR} that elements of $\cq^1(n)$ are pairwise non-isomorphic as quivers with relation. So  $\varepsilon\colon \ct^1(\Lambda_n)\to\ca_t^1(\Lambda_n)$ is bijective.

(c) We have $a_t^1(\Lambda_n)=|\ct^1(\Lambda_n)|=t(\Lambda_{n-1})$, where the first equality is by (b) and the second equality is by Remark~\ref{rek:tilting-module-over-A-turn-to-B}.
\end{proof}

For $n\geq 3$, let $\ct^2(\Lambda_n)$ be the set of isoclasses of basic tilting modules over $\Lambda_n$ which contains $P(n)$ as a direct summand but does not contain $P(n-1)$ as a direct summand and $\ca_t^2(\Lambda_n)=\{\End(T)\mid T\in\ct^2(\Lambda_n)\}$.

\begin{lemma}
\label{lem:number-of-tilting-modules-without-P(n-1)-as-summand}
The bijection $\varepsilon_1$ in Lemma~\ref{lem:genealogical-tree} (a) restricts to a bijection $\ct^2(\Lambda_n)\to\cq^2(n)$. Moreover, $|\cq^2(n)|=|\ct^2(\Lambda_n)|=t(\Lambda_{n-1})-t(\Lambda_{n-2})$ and $|\cq^3(n)|=t(\Lambda_{n-1})-t(\Lambda_{n-2})-2^{n-2}+1$.
\end{lemma}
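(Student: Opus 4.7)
The plan is to leverage the bijection $\varepsilon_1\colon \ct^1(\Lambda_n)\to\cq^1(n)$ from Lemma~\ref{lem:tilting-modules-and-tilted-algebras-with-P(n)}(a) and to show that the claimed restriction amounts to the equivalence $P(n-1)\notin T \iff v$ lies on a relation in $\varepsilon_1(T)$. To prove this, I will let $P(j)$ be the summand of $T$ corresponding to $v$ (so $j$ is the second largest index with $P(j)\in T$) and apply Proposition~\ref{prop:tilting-modules-of-Lambdan} to $T$ restricted to the wing of $P(j)$, viewed as a tilting module of a category equivalent to $\mod \Lambda_{n-j+1}$. When $j=n-1$, the only possibility consistent with $P(n)\in T$ is Case~(1) with the summand in the sub-wing of $P(n)$, which forces $v$ to have $u$ as its only child, so by Lemma~\ref{lem:vertices-lying-on-a-relation}(c), $v$ does not lie on a relation. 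When $j\leq n-2$, both subcases of Case~(1) lead to contradictions (one would force $P(j+1)\in T$, violating the maximality of $j$; the other would place $P(n)$ in a sub-wing that does not contain it), so Case~(2) must apply with its parameter equal to $n$ and with a nontrivial right summand; this gives $v$ two children in $\varepsilon_1(T)$, hence three neighbours (two if $v$ is the root), so $v$ lies on a relation.

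For $|\cq^2(n)|$, Remark~\ref{rek:tilting-module-over-A-turn-to-B} yields a bijection $\ct^1(\Lambda_n)\to\ct(\Lambda_{n-1})$ via $T=P(n)\oplus T'\mapsto T'$, under which $P(n-1)$ of $\Lambda_n$ corresponds to the last projective of $\Lambda_{n-1}$; hence $T\in\ct^2(\Lambda_n)\iff T'\notin\ct^1(\Lambda_{n-1})$. Combining with Lemma~\ref{lem:tilting-modules-and-tilted-algebras-with-P(n)}(c) and Proposition~\ref{prop:number-of-tilting-modules-over-An}, I obtain
\[
|\cq^2(n)|=|\ct^2(\Lambda_n)|=t(\Lambda_{n-1})-t(\Lambda_{n-2}).
\]

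For $|\cq^3(n)|$, I first verify that for $T=P(n)\oplus T'\in\ct^2(\Lambda_n)$, removing $u$ from $\varepsilon_1(T)$ gives the rooted quiver with relation obtained by applying $\varepsilon_1$ (for $\Lambda_{n-1}$) to $T'$: removing the summand $P(n)$ in the recursive construction precisely turns the relevant Case~(2) decomposition of the wing of $P(j)$ into a Case~(1) one (or, when $j=n-1$, collapses the wing), which matches the effect of removing $u$. Consequently, the $R\in\cq^2(n)$ with $R\setminus\{u\}$ hereditary correspond bijectively to elements of $\cq_h(n-1)\setminus\cq^1(n-1)$. A direct check shows that the source of the left border of any $R'\in\cq_h(n-1)$ (a path of type $\mathbb{A}_{n-1}$ with the root at an endpoint) has exactly one outgoing arrow only when $R'$ is the linearly oriented $\overrightarrow{\mathbb{A}}_{n-1}$ with all arrows pointing toward the root, so $|\cq_h(n-1)\cap\cq^1(n-1)|=1$. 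Then by Lemma~\ref{lem:hereditary-QR}(a) we get $|\cq_h(n-1)\setminus\cq^1(n-1)|=2^{n-2}-1$, yielding $|\cq^3(n)|=t(\Lambda_{n-1})-t(\Lambda_{n-2})-2^{n-2}+1$.

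The main obstacle is establishing the compatibility $\varepsilon_1(T)\setminus\{u\}=\varepsilon_1(T')$ above: it requires unfolding the recursive definition of $\varepsilon_1$ to match Case~(2) on the $\Lambda_n$ side with Case~(1) on the $\Lambda_{n-1}$ side (together with the degenerate case $j=n-1$). Once this is checked, the counting is direct combinatorial bookkeeping.
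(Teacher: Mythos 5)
Your proposal is correct, and for the first two assertions it is close to the paper: the key equivalence is the same (for $T\in\ct^1(\Lambda_n)$, $P(n-1)$ is a summand of $T$ if and only if $v$ does not lie on a relation of $\varepsilon_1(T)$), but you justify it by running the recursive construction of $\varepsilon_1$ at the vertex $v$ (Proposition~\ref{prop:tilting-modules-of-Lambdan} applied to the wing of $P(j)$, then Lemma~\ref{lem:vertices-lying-on-a-relation}(c)), whereas the paper exhibits the relevant zero compositions $P(n)\to P(n-1)\to S(n-1)$ and $P(n)\to P(j)\to\tau^{-2}P(j+2)$ via Lemma~\ref{lem:maximal-P(i)-to-relation}; the count $t(\Lambda_{n-1})-t(\Lambda_{n-2})$ is obtained in both proofs by applying Remark~\ref{rek:tilting-module-over-A-turn-to-B} twice. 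The genuine divergence is in the formula for $|\cq^3(n)|$: the paper stays entirely on the combinatorial side, removing $u$ from $R\in\cq^2(n)\setminus\cq^3(n)$ and checking directly which $R'\in\cq_h(n-1)$ can be extended back by $u$ (all except $\overrightarrow{\mathbb{A}}_{n-1}$ with the unique sink as root), with no tilting modules involved, while you route the count through the module side via the compatibility $\varepsilon_1(T)\setminus\{u\}=\varepsilon_1(T')$ and then identify the excluded set as $\cq_h(n-1)\cap\cq^1(n-1)$, which you correctly verify is that same single linearly oriented quiver, so the two counts agree. Your route is valid, but the compatibility statement, which you rightly flag as the main obstacle, is its price: a complete write-up must check not only that the decomposition at $v$ turns from Case~(2) into Case~(1) (or collapses when $j=n-1$), but also that nothing else in the recursion changes --- e.g.\ that the wings of the non-projective summands of $T$ meet neither $P(n)$ nor the upper ray of $S(n-1)$ (summands on that ray being excluded by Lemma~\ref{lem:direct-summand-and-1-orbit}), and that at the projective summands above $P(j)$ the trichotomy of Proposition~\ref{prop:tilting-modules-of-Lambdan} is unaffected because the left-hand part still contains the next projective. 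These checks are routine and your claim is true; what your extra work buys is a reusable structural fact (the reduction $T\mapsto T'$ of Remark~\ref{rek:tilting-module-over-A-turn-to-B} is compatible with $\varepsilon_1$), whereas the paper's direct analysis of which hereditary $R'$ admit the extension by $u$ is shorter and avoids the module category altogether at this step.
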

\begin{proof}
Let $T\in\ct^1(\Lambda_n)$. Let $2\leq j\leq n-1$ be the maximal such that $P(j)$ is a direct summand of $T$ and $v$ be the corresponding vertex in $\varepsilon_1(T)$. Then there is an arrow $u\to v$. If $j=n-1$, then $v$ does not lie on a relation, because a zero homomorphism from $P(n)$ which factors through $P(n-1)$ non-trivially is of the form $P(n)\to P(n-1)\to S(n-1)$, but $S(n-1)$ cannot be a direct summand of $T$. If $j\leq n-2$, then $v$ lies on a relation, because $\tau^{-2}P(j+2)$ is a direct summand of $T$ by Lemma~\ref{lem:maximal-P(i)-to-relation}, and any composition $P(n)\to P(j)\to \tau^{-2}P(j+2)$ is zero. Then the first statement follows from Lemma~\ref{lem:tilting-modules-and-tilted-algebras-with-P(n)} (a). The first part of the second statement follows from the first statement by applying Remark~\ref{rek:tilting-module-over-A-turn-to-B} twice.  Let $R\in\cq^2(n)-\cq^3(n)$ and $R'$ be the quiver with relation obtained from $R$ by removing the vertex $u$. Then $R'\in\cq_h(n-1)$. Notice that all elements of $\cq_h(n-1)$ can be $R'$ except $\overrightarrow{\mathbb{A}}_{n-1}$ with the unique sink as the root. Therefore $|\cq^2(n)|-|\cq^3(n)|=2^{n-2}-1$, and we obtain the desired formula for $|\cq^3(n)|$.
\end{proof}

Let $\ca_t^4(\Lambda_n)$ denote the set of isoclasses of endomorphism algebras of basic tilting modules over $\Lambda_n$ which does not contain $P(n)$ as a direct summand and let $a_t^4(\Lambda_n)$ denote its cardinality. Then $a_t^4(\Lambda_1)=0$.

\begin{lemma} \label{lem:tilted-algebra-without-Pn}
For $n\geq 2$, we have
$a_t^4(\Lambda_{n})=a_{t}(\Lambda_{n})-t(\Lambda_{n-1})+1$.
\end{lemma}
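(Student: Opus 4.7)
The plan is to exploit the set-theoretic decomposition $\ca_t(\Lambda_n) = \ca_t^1(\Lambda_n) \cup \ca_t^4(\Lambda_n)$, which holds because any $B \in \ca_t(\Lambda_n) \setminus \ca_t^1(\Lambda_n)$ has no preimage under $\varepsilon$ containing $P(n)$, so automatically lies in $\ca_t^4(\Lambda_n)$. Applying inclusion--exclusion together with Lemma~\ref{lem:tilting-modules-and-tilted-algebras-with-P(n)}~(c) gives
\[
a_t^4(\Lambda_n) = a_t(\Lambda_n) - a_t^1(\Lambda_n) + |\ca_t^1(\Lambda_n) \cap \ca_t^4(\Lambda_n)| = a_t(\Lambda_n) - t(\Lambda_{n-1}) + |\ca_t^1(\Lambda_n) \cap \ca_t^4(\Lambda_n)|,
\]
so the task reduces to proving $\ca_t^1(\Lambda_n) \cap \ca_t^4(\Lambda_n) = \{\Lambda_n\}$.

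First I would verify that $\Lambda_n$ itself lies in the intersection. The regular module $\Lambda_n = P(1)\oplus\cdots\oplus P(n)$ belongs to $\ct^1(\Lambda_n)$ with endomorphism algebra $\Lambda_n$; the cotilting module $I(1)\oplus\cdots\oplus I(n)$ is a distinct basic tilting module, and for $n\geq 2$ it does not contain $P(n)$ as a summand because $P(1)=I(n)$ is the unique projective-injective indecomposable. Under the identification $\Lambda_n^{op} \cong \Lambda_n$ coming from $i \mapsto n+1-i$, its endomorphism algebra is also $\Lambda_n$, so $\Lambda_n$ indeed sits in both $\ca_t^1(\Lambda_n)$ and $\ca_t^4(\Lambda_n)$.

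For the reverse inclusion, let $B \in \ca_t^1(\Lambda_n) \cap \ca_t^4(\Lambda_n)$, witnessed by two non-isomorphic tilting modules $T_1 \in \ct^1(\Lambda_n)$ and $T_2 \in \ct(\Lambda_n) \setminus \ct^1(\Lambda_n)$. Then $\varepsilon^{-1}(B)$ has at least two elements, so Lemma~\ref{lem:genealogical-tree}~(b) forces $B$ to be hereditary and Lemma~\ref{lem:hereditary-QR}~(b) gives $\varepsilon_1(T_2) = \overline{\varepsilon_1(T_1)}$ with both lying in $\cq_h(n)$; in particular $R := \varepsilon_1(T_1) \in \cq^1(n) \cap \cq_h(n)$. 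The heart of the argument is to check that this intersection is a singleton. By Lemma~\ref{lem:hereditary-QR}~(a), $R$ corresponds to a descending path of $n-1$ steps in the genealogical tree starting at the root, each step going either lower-left (via a $\beta$-arrow) or lower-right (via an $\alpha$-arrow), and the left border of $R$ is the maximal initial run of lower-left steps. If this run stops strictly before the last step, then its source vertex has two outgoing arrows --- one $\beta$-arrow up to its parent and one $\alpha$-arrow down to its lower-right child --- contradicting the defining condition of $\cq^1(n)$. Hence the run has length $n-1$, $R$ is forced to be the all-lower-left descending path, and its underlying quiver with relation is precisely $\overrightarrow{\mathbb{A}_{n}}$, yielding $B \cong \Lambda_n$.

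I expect this combinatorial identification of $\cq^1(n) \cap \cq_h(n)$ as a singleton to be the only real obstacle; the inclusion--exclusion, the reduction to the hereditary case via Lemma~\ref{lem:genealogical-tree}~(b), and the exhibition of the two canonical tilting modules witnessing $\Lambda_n \in \ca_t^1(\Lambda_n) \cap \ca_t^4(\Lambda_n)$ are routine once the singleton claim is in place.
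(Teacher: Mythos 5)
Your proposal is correct and follows essentially the same route as the paper's proof: the decomposition $\ca_t(\Lambda_n)=\ca_t^1(\Lambda_n)\cup\ca_t^4(\Lambda_n)$ with inclusion--exclusion and $a_t^1(\Lambda_n)=t(\Lambda_{n-1})$, the reduction of the intersection to hereditary algebras via Lemma~\ref{lem:genealogical-tree}~(b), and the identification $\Lambda_n=\End(I(1)\oplus\cdots\oplus I(n))$ with the injective cogenerator containing no $P(n)$. The only difference is that you verify by hand that $\cq^1(n)\cap\cq_h(n)$ consists solely of $\overrightarrow{\mathbb{A}}_n$ with the sink as root, which is precisely the observation the paper cites from the proof of Lemma~\ref{lem:tilting-modules-and-tilted-algebras-with-P(n)}~(b).
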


\begin{proof}  By Lemma~\ref{lem:genealogical-tree} (b), the intersection $\ca_t^4(\Lambda_n)\cap\ca_t^1(\Lambda_n)$ contains hereditary tilted algebras only. By the proof of Lemma~\ref{lem:tilting-modules-and-tilted-algebras-with-P(n)} (b), in $\ca_t^1(\Lambda_n)$ there is only one hereditary tilted algebra, which is $\Lambda_n=\End(P(1)\oplus\ldots \oplus P(n))=\End(I(1)\oplus\ldots\oplus I(n))$ and thus belongs also to $\ca_t^4(\Lambda_n)$. Therefore $\ca_t^4(\Lambda_n)\cap\ca_t^1(\Lambda_n)=\{\Lambda_n\}$. 
Since $\ca_t(\Lambda_n)=\ca_t^4(\Lambda_n)\cup\ca_t^1(\Lambda_n)$, it follows that $a_t^4(\Lambda_{n})=a_{t}(\Lambda_{n})-a_t^1(\Lambda_n)+1=a_{t}(\Lambda_{n})-t(\Lambda_{n-1})+1$.
\end{proof}

For $n\geq 2$, let $\ca_t^5(\Lambda_n)$ denote the set of isoclasses of endomorphism algebras of basic tilting modules over $\Lambda_n$ which contains $P(2)$ as a direct summand and let $a_t^5(\Lambda_n)$ denote its cardinality.

\begin{lemma}
\label{lem:tilted-algebra-with-P2}
$a_t^5(\Lambda_n)=a_t(\Lambda_{n-1})$ and $|\ca_t^4(\Lambda_n)\cap\ca_t^5(\Lambda_n)|=a_t^4(\Lambda_{n-1})$.
\end{lemma}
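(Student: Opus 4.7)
The strategy is to leverage Proposition~\ref{prop:tilting-modules-of-Lambdan} together with the wing equivalence between the wing of $P(2)$ in $\mod\Lambda_n$ and $\mod\Lambda_{n-1}$.

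First I would observe that a basic tilting module $T$ over $\Lambda_n$ has $P(2)$ as a direct summand if and only if $T=P(1)\oplus T'$, where $T'$ is a basic tilting module in the wing of $P(2)$. Indeed, by Proposition~\ref{prop:tilting-modules-of-Lambdan} $T$ must fall into one of the two cases listed there: the case where $T'$ lies in the wing of $I(n-1)$ excludes $P(2)$, and so does the second case involving wings of $P(i)$ and $I(i-2)$ with $i\geq 3$, so only the case $T'\in\text{wing}(P(2))$ contributes. The canonical equivalence from the wing of $P(2)$ to $\mod\Lambda_{n-1}$ sends $P(2)$ to $P_{\Lambda_{n-1}}(1)$ and $P(n)$ to $P_{\Lambda_{n-1}}(n-1)$; this yields a bijection $T\mapsto\tilde T$ between the set $\ct^5(\Lambda_n)$ of basic tilting modules of $\Lambda_n$ containing $P(2)$ and $\ct(\Lambda_{n-1})$.

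For the first equality, I would show that this bijection on tilting modules descends to a bijection on endomorphism-algebra isoclasses. Via Lemma~\ref{lem:genealogical-tree}, this corresponds to showing that the induced map on rooted quivers with relation --- which removes the root of $\varepsilon_1(T)\in\cq(n)$ (a sink with one neighbour, by Lemma~\ref{lem:genealogical-tree}(c)) and translates the resulting subtree rooted at the former lower-left child back to the genealogical tree via the canonical self-isomorphism --- descends to isomorphism classes under the unrooted-quiver-with-relation equivalence. By Corollary~\ref{cor:non-hereditary-QR} non-hereditary elements are pairwise non-isomorphic on both sides, so the only question is whether the involution $R\leftrightarrow\bar R$ of Lemma~\ref{lem:hereditary-QR}(b) is
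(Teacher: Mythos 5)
Your first paragraph is essentially the paper's own (one-line) argument: by Proposition~\ref{prop:tilting-modules-of-Lambdan}, the basic tilting modules over $\Lambda_n$ containing $P(2)$ are exactly the $P(1)\oplus T'$ with $T'$ a basic tilting module in the wing of $P(2)$, and the wing equivalence identifies these $T'$ with the basic tilting modules over $\Lambda_{n-1}$. The trouble is everything after that.

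The proposal is unfinished (it breaks off mid-sentence at the decisive point), it never addresses the second equality $|\ca_t^4(\Lambda_n)\cap\ca_t^5(\Lambda_n)|=a_t^4(\Lambda_{n-1})$ at all, and --- more seriously --- the step it announces, namely that the bijection $T'\mapsto P(1)\oplus T'$ \emph{descends to a bijection on isoclasses of endomorphism algebras}, is false, and it fails exactly in the hereditary case you left hanging. Take $n=3$: the two tilting modules in the wing of $P(2)$ are $T'_1=P(2)\oplus P(3)$ and $T'_2=P(2)\oplus S(2)$, and under the wing equivalence both have endomorphism algebra the path algebra of $\mathbb{A}_2$; yet $\End(P(1)\oplus T'_1)=\Lambda_3$ (linear quiver, dimension $6$) while $\End(P(1)\oplus T'_2)$ is the path algebra of $\bullet\leftarrow\bullet\rightarrow\bullet$ (dimension $5$), so the two endomorphism algebras over $\Lambda_n$ are not isomorphic. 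In the language of your reduction: for hereditary $R\in\cq_h(n-1)$ with $\bar{R}\neq R$, attaching the pendant sink at the root of $R$ and at the root of $\bar{R}$ (that is, at the leaf of $R$) gives non-isomorphic quivers in general, so the involution of Lemma~\ref{lem:hereditary-QR}(b) is \emph{not} compatible with your operation and no descended bijection exists; indeed for $n=3$ one even has $a_t^5(\Lambda_3)=2$ while $a_t(\Lambda_2)=1$, so an equality of cardinalities, where it holds, cannot be obtained this way and requires matching the number of identifications occurring on each side. Those identifications are of a genuinely different nature: for instance for $n=4$ the rooted quivers $r\leftarrow c\rightarrow d$ and $r\rightarrow c\rightarrow d$ (rooted at $r$), which are \emph{not} related by the root--leaf involution, become isomorphic after attaching the pendant vertex, and it is this pattern of coincidences that must be balanced against the $R\leftrightarrow\bar{R}$ identifications on the $\Lambda_{n-1}$ side. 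Your reduction of the non-hereditary case is also too quick: Corollary~\ref{cor:non-hereditary-QR} says distinct non-hereditary elements of $\cq(n-1)$ are non-isomorphic as quivers with relation, but you still need to rule out that two distinct ones become isomorphic after the attachment. None of this counting appears in the proposal, so as it stands the argument does not establish either equality of the lemma.
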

\begin{proof}
This follows from Proposition~\ref{prop:tilting-modules-of-Lambdan} because the additive closure of the wing of $P(2)$ is naturally equivalent to $\mod \Lambda_{n-1}$.
\end{proof}

For $n=1,2$, put $\ca_t^6(\Lambda_n)=\ca_t(\Lambda_n)$, which has one element only. For $n\geq 3$, let $\ca_t^6(\Lambda_n)=\ca_t^4(\Lambda_n)\cup\{\End(P(1)\oplus\ldots\oplus P(n-2)\oplus P(n)\oplus \tau^{-2}P(n))\}$.  As this extra element belongs to $\ca_t^1(\Lambda_n)$ and is non-hereditary, it does not belong to $\ca_t^4(\Lambda_n)$. Therefore letting $a_t^6(\Lambda_n)$ denote the cardinality of $\ca_t^6(\Lambda_n)$, we have as a consequence of Lemma~\ref{lem:tilted-algebra-without-Pn}

\begin{lemma} \label{prop:tilted-algebra-under-mutation}
$
a_t^{6}(\Lambda_{n})=
\begin{cases} 1 & \text{if } n=1,2,\\
a_{t}(\Lambda_{n})-t(\Lambda_{n-1})+2 & \text{if }n\geq 3.
\end{cases}
$
\end{lemma}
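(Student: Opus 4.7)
The plan is to handle the two trivial cases $n=1,2$ separately and then, for $n\geq 3$, simply unpack the definition of $\ca_t^6(\Lambda_n)$ and apply Lemma~\ref{lem:tilted-algebra-without-Pn}. No deep new input is needed — the statement is essentially a bookkeeping consequence of earlier results, and the only thing that requires a genuine check is that the ``extra'' element added to $\ca_t^4(\Lambda_n)$ is indeed new.

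For $n=1,2$, the claim $a_t^6(\Lambda_n)=1$ is immediate from the convention $\ca_t^6(\Lambda_n)=\ca_t(\Lambda_n)$ together with the direct observation that $\ca_t(\Lambda_1)$ and $\ca_t(\Lambda_2)$ each contain exactly one isoclass (e.g.\ by Proposition~\ref{prop:the-number-of-non-hereditary-tilted-algebras}, or by inspection). So I would dispose of these cases in one line.

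For $n\geq 3$, set $B:=\End(P(1)\oplus\cdots\oplus P(n-2)\oplus P(n)\oplus \tau^{-2}P(n))$, so that by definition
\[
\ca_t^6(\Lambda_n)=\ca_t^4(\Lambda_n)\cup\{B\}.
\]
The key step is to show that $B\notin\ca_t^4(\Lambda_n)$, so that the union is disjoint. I would argue as follows: the tilting module defining $B$ has $P(n)$ as a direct summand, so $B\in\ca_t^1(\Lambda_n)$. From the proof of Lemma~\ref{lem:tilted-algebra-without-Pn}, we have $\ca_t^4(\Lambda_n)\cap\ca_t^1(\Lambda_n)=\{\Lambda_n\}$, so it suffices to show $B\neq\Lambda_n$, i.e.\ that $B$ is non-hereditary. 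This can be read off via the bijection $\varepsilon_1$ of Lemma~\ref{lem:genealogical-tree}(a): the corresponding rooted quiver with relation lies in $\cq^2(n)\subseteq\cq_{nh}(n)$ (apply Lemma~\ref{lem:number-of-tilting-modules-without-P(n-1)-as-summand} with $j=n-2$, or directly observe that any map $P(n)\to P(n-2)\to\tau^{-2}P(n)$ is zero, yielding a non-trivial relation in $B$). Hence $B$ is non-hereditary, $B\neq\Lambda_n$, and the union is disjoint.

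Given disjointness, Lemma~\ref{lem:tilted-algebra-without-Pn} yields
\[
a_t^6(\Lambda_n)=a_t^4(\Lambda_n)+1=\bigl(a_t(\Lambda_n)-t(\Lambda_{n-1})+1\bigr)+1=a_t(\Lambda_n)-t(\Lambda_{n-1})+2,
\]
as required. The only mild obstacle is verifying the non-hereditarity of $B$ cleanly; everything else is pure reindexing.
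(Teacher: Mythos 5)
Your proposal is correct and follows essentially the same route as the paper: the paper also observes that the extra algebra $\End(P(1)\oplus\cdots\oplus P(n-2)\oplus P(n)\oplus\tau^{-2}P(n))$ lies in $\ca_t^1(\Lambda_n)$ and is non-hereditary, hence (by the intersection computation in the proof of Lemma~\ref{lem:tilted-algebra-without-Pn}) is not in $\ca_t^4(\Lambda_n)$, and then applies Lemma~\ref{lem:tilted-algebra-without-Pn} to get $a_t^6(\Lambda_n)=a_t^4(\Lambda_n)+1$. Your verification of non-hereditarity via the zero composition $P(n)\to P(n-2)\to\tau^{-2}P(n)$ (equivalently $\varepsilon_1(T)\in\cq^2(n)\subseteq\cq_{nh}(n)$) just makes explicit a detail the paper asserts without comment.
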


When $n=3$, the difference between $\ca_t(\Lambda_3)$ and $\ca_t^4(\Lambda_3)$ is the unique non-hereditary element in $\ca_t(\Lambda_3)$, and is exactly $\End(P(1)\oplus P(3)\oplus \tau^{-2}P(3))$. This means $\ca_t^6(\Lambda_3)=\ca_t(\Lambda_3)$.

\subsection{2-term silting complexes of $\Lambda_n$ and silted algebras}
\label{ss:silted-algebras-type-Lambdan}
In this subsection, we give a classification of the 2-term silting complexes over $\Lambda_n$ and prove Theorem~\ref{thm:the-number-of-silted-algebras-of-Lambda_n} as a consequence.

First, denote by $s_{2}(\Lambda_n)$ the number of isoclasses of basic $2$-term silting complexes over $\Lambda_n$. Then by \cite[Theorem 1]{ObaidNaumanFakiehRingel14} and Theorem~\ref{thm:correspondence-between-silting-and-support-tau-tilting} we have $s_{2}(\Lambda_n)=\frac{1}{n+2}(\begin{smallmatrix}2n+2\\n+1\end{smallmatrix})$. 
The following table contains the first values of $t(\Lambda_n)$ and $s_2(\Lambda_n)$.

$$\begin{tabular}{|c||c|c|c|c|c|c|c|c|c|c| p{<20>}}
\hline
$n$ & $1$ & $2$ & $3$ & $4$ & $5$ & $6$ & $7$ & $8$ & $9$ & $10$\\
\hline
$t(\Lambda_n)$ & 1 & 2 & 5 & 14 & 42 & 132 & 429 & 1430 & 4862 & 16796 \\
\hline
$s_2(\Lambda_n)$ & 2 & 5 & 14 & 42 & 132 & 429 & 1430 & 4862 & 16796 & 58786\\
\hline
\end{tabular} $$

\medskip
Next, we provide a classification. We need the following lemma. The notions \emph{upper ray}, \emph{lower ray} and \emph{hammock} directly generalise to the AR quiver of $K^b(\proj \Lambda_n)$.

\begin{lemma}
\label{lem:derived-Hom-and-hammock}
Let $X$ and $Y$ be indecomposable objects in $K^b(\proj \Lambda_n)$. Then $\Hom(X,Y)\neq 0$ if and only if $Y$ belongs to the hammock starting at $X$ if and only if $X$ belongs to the hammock starting at $\tau Y[-1]$. If these conditions are satisfied, then $\Hom(X,Y)=k$.
\end{lemma}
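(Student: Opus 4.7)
The plan is to reduce everything to the module category via hereditariness and then combine the classical Happel--Ringel description of $\Hom$-support in type $\mathbb{A}_n$ with Serre duality in the derived category. Since $\Lambda_n$ is hereditary of finite global dimension, every indecomposable object of $K^{b}(\proj\Lambda_n)$ is a shift $M[i]$ of an indecomposable module, so writing $X=M[i]$ and $Y=N[j]$ one has
\[
\Hom_{K^{b}(\proj\Lambda_n)}(X,Y)\cong \Ext^{j-i}_{\Lambda_n}(M,N),
\]
which vanishes unless $j-i\in\{0,1\}$. Both $\Hom(M,N)$ and $\Ext^{1}(M,N)$ between indecomposable $\Lambda_n$-modules are at most one-dimensional by the interval combinatorics of $\mathbb{A}_n$-type indecomposables, which will give the final one-dimensional assertion automatically.

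For the first equivalence I would split into the two surviving cases. For $j=i$ I would invoke the Happel--Ringel description (the first part of the proof of \cite[(4.1)]{HappelRingel81}, the same ingredient underlying Lemma~\ref{lem:Ext-and-hammock}): for indecomposable $M,N\in\mod\Lambda_n$, $\Hom(M,N)\neq 0$ iff $N$ lies in the hammock starting at $M$ in the AR quiver of $\mod\Lambda_n$. Since this AR quiver embeds as a full translation subquiver of $\mathbb{Z}\mathbb{A}_n$, the hammock extends naturally to $K^{b}(\proj\Lambda_n)$. For $j=i+1$ I would use the AR formula $\Ext^{1}(M,N)\cong D\Hom(N,\tau M)$ valid for hereditary algebras (both sides vanishing when $M$ is projective), so that $\Ext^{1}(M,N)\neq 0$ iff $\tau M$ lies in the hammock of $N$ as $\Lambda_n$-modules; inspection of the meshes in $\mathbb{Z}\mathbb{A}_n$ identifies this in turn with the geometric condition that $Y=N[i+1]$ lies in the derived hammock of $X=M[i]$.

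For the second equivalence I would appeal to Serre duality in $D^{b}(\Lambda_n)\simeq K^{b}(\proj\Lambda_n)$: since $\Lambda_n$ is hereditary of Dynkin type there is a Serre functor $\nu\cong\tau[1]$, and the duality $\Hom(A,B)\cong D\Hom(B,\nu A)$, applied after expressing $\tau Y[-1]$ in terms of $\nu$ and a shift of $Y$, yields a canonical isomorphism between $\Hom(X,Y)$ and the relevant $\Hom$ space involving $\tau Y[-1]$ and $X$. Combined with the first equivalence this translates into the claim that $X$ lies in the hammock starting at $\tau Y[-1]$. The main technical point of the whole argument is careful book-keeping at the interface between $\mod\Lambda_n$ and $K^{b}(\proj\Lambda_n)$: one must verify that the geometric hammock in $\mathbb{Z}\mathbb{A}_n$ really coincides with the $\Hom$-support across shift boundaries, which follows from the periodic identity $\tau^{n+1}\cong[-2]$ on $\mathbb{Z}\mathbb{A}_n$ together with the AR formula already used in the first step.
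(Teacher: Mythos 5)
The paper states this lemma without proof (it is invoked as a consequence of the Happel--Ringel/Ringel--Vossieck hammock combinatorics for $\mathbb{Z}\overrightarrow{\mathbb{A}}_{n}$), so your reduction is the natural route, and the first half of your sketch is sound in outline: indecomposables are shifts of modules, $\Hom(M[i],N[j])\cong\Ext^{j-i}(M,N)$ is concentrated in degrees $0$ and $1$, the degree-$0$ part is the Happel--Ringel module hammock, the degree-$1$ part is controlled by $\Ext^1(M,N)\cong D\Hom(N,\tau M)$, and all these spaces are at most one-dimensional. The "inspection of the meshes" you defer is genuinely needed (one must check that the rectangle of $M$ in the strip meets $\mod\Lambda_n$ exactly in the module hammock and meets $(\mod\Lambda_n)[1]$ exactly in $\{N[1]:\Hom(N,\tau M)\neq 0\}$), but this is routine for the linear orientation.

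The genuine gap is in your Serre-duality step for the second equivalence. With $S=\tau[1]$, Serre duality gives $\Hom(X,Y)\cong D\Hom(Y,\tau X[1])\cong D\Hom(\tau^{-1}Y[-1],X)$; it does \emph{not} produce any isomorphism between $\Hom(X,Y)$ and a Hom-space involving $\tau Y[-1]=SY[-2]$. Indeed $D\Hom(\tau Y[-1],X)\cong\Hom(X,\tau^{2}Y)$, and the second equivalence as printed is false: take $n=2$, $X=P(2)$, $Y=P(1)$; then $\Hom(X,Y)=k$ and $Y$ lies in the hammock starting at $X$, whereas $\tau Y[-1]=I(1)[-2]$ and, by the (correct) first equivalence, $X$ lies in its hammock if and only if $\Hom(I(1)[-2],P(2))=\Ext^{2}(I(1),P(2))\neq 0$, which fails. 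So the statement you should be proving replaces $\tau Y[-1]$ by $\tau^{-1}Y[-1]=S^{-1}Y$ (equivalently: $\tau X[1]$ belongs to the hammock starting at $Y$); this corrected form is what the paper actually uses, e.g.\ in Section~\ref{ss:strategy}, where for $X=S(n-1)[1]$ the set $\{Y:\Hom(Y,X)\neq 0\}$ is identified with the upper ray of $S(n-2)=\tau^{-1}X[-1]$ and not with the hammock of $\tau X[-1]=P(n)$. With that correction your plan does go through, since $\Hom(X,Y)\cong D\Hom(\tau^{-1}Y[-1],X)$ reduces the second equivalence to the first; but as written, "expressing $\tau Y[-1]$ in terms of $\nu$ and a shift of $Y$" asserts an isomorphism that does not exist, and carrying out the computation honestly is exactly what would have exposed the sign of the translate.
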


The AR-quiver of $K^b(\proj \Lambda_n)$ is $\mathbb{Z}\overrightarrow{\mathbb{A}}_{n}$, by \cite[Section I.5.6]{Happel88}. We consider the AR-quiver of $\mod \Lambda_n$ as a full subquiver, and then draw the AR-quiver of $K^{[-1,0]}(\proj \Lambda_n)$ in Firgure~2.
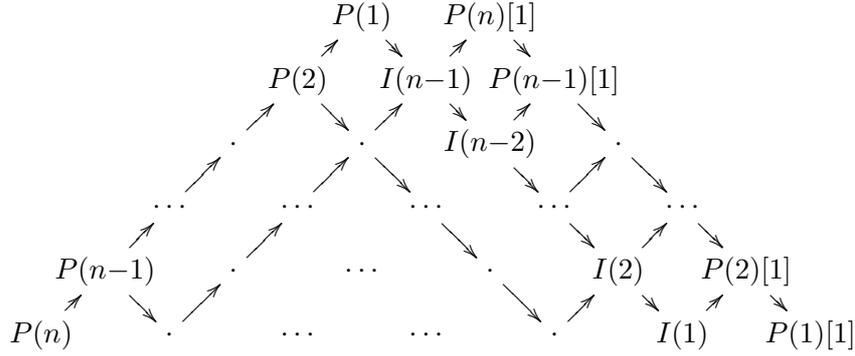
\begin{figure}
\label{fig:the-AR-quiver-of-2-term-complexes-over-Lambdan}
\hspace{10pt}\begin{xy}
0;<3pt,0pt>:<0pt,3pt>::
(56,-40)*+{P(1)[1]}="27",
(48,-32)*+{P(2)[1]}="26",
(40,-24)*+{\cdots}="25",
(32,-16)*+{\cdot}="24",
(24,-8)*+{P(n{-}1)[1]}="23",
(16,0)*+{P(n)[1]}="22",
 (0,0)*+{P(1)}="1",
(-8,-8)*+{P(2)}="2",
(8,-8)*+{I(n{-}1)}="3",
(-16,-16)*+{\cdot}="4",
(0,-16)*+{\cdot}="5",
(16,-16)*+{I(n{-}2)}="6",
(-24,-24)*+{\cdots}="7",
(-8,-24)*+{\cdots}="8",
(8,-24)*+{\cdots}="9",
(24,-24)*+{\cdots}="10",
(-32,-32)*+{P(n{-}1)}="11",
(-16,-32)*+{\cdot}="12",
(0,-32)*+{\cdots}="13",
(16,-32)*+{\cdot}="14",
(32,-32)*+{I(2)}="15",
(-40,-40)*+{P(n)}="16",
(-24,-40)*+{\cdot}="17",
(-8,-40)*+{\cdots}="18",
(8,-40)*+{\cdots}="19",
(24,-40)*+{\cdot}="20",
(40,-40)*+{I(1)}="21",
\ar"2";"1", \ar"2";"5", \ar"5";"3", \ar"1";"3", \ar"4";"2", \ar"3";"6",
\ar"7";"4", \ar"6";"10", \ar"5";"9", \ar"9";"14", \ar"8";"5",\ar"14";"20",\ar"17";"12",
\ar"16";"11", \ar"15";"21",
\ar"11";"7", \ar"11";"17", \ar"10";"15",\ar"20";"15", \ar"12";"8",\ar"3";"22",
\ar"6";"23",\ar"22";"23", \ar"23";"24", \ar"24";"25", \ar"25";"26", \ar"26";"27",
\ar"10";"24",\ar"15";"25",\ar"21";"26",
\end{xy}
\caption{The~Auslander--Reiten~quiver~of~$K^{[-1,0]}(\proj \Lambda_n)$}
\end{figure}

\begin{proposition}\label{prop:2-term-silting-complexes-of-Lambda_n}
A complex $M\in K^b(\proj \Lambda_n)$ is a basic 2-term silting complex if and only if it belongs to one of the following three families:
\begin{itemize}
\item[(1)] $T$, a basic tilting module over $\Lambda_n$;

\item[(2)] $\tau^{-1} T$, where $T$ is a basic tilting module over $\Lambda_n$;

\item[(3)] $M=M'\oplus M''$, where $M'$ is a basic tilting module of the wing of $P(i)$ with $2\leq i\leq n$ and $M''$ is a basic tilting module of the wing of $P(i-1)[1]$.
\end{itemize}
\end{proposition}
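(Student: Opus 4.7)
The plan is to argue both directions: first, verify that each listed family yields a 2-term silting complex; then, use a counting argument matching $s_{2}(\Lambda_n) = C_{n+1}$ to show exhaustiveness, where $C_k = \frac{1}{k+1}\binom{2k}{k}$ is the $k$-th Catalan number.

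Family (1) is immediate: a tilting module over the hereditary algebra $\Lambda_n$ is a 2-term tilting complex, hence silting. For family (2), I would use that $\Lambda_n$ has finite global dimension, so the Auslander--Reiten translate $\tau$ is an auto-equivalence of $K^b(\proj \Lambda_n)$, and $\tau^{-1}$ preserves the silting property. Each indecomposable summand of a tilting module $T$ is either non-injective (in which case $\tau^{-1}$ yields a non-projective module in degree $0$) or of the form $I(j)$ (in which case $\tau^{-1} I(j) = P(j)[1]$ sits in degree $-1$), so $\tau^{-1} T \in K^{[-1,0]}(\proj \Lambda_n)$. For family (3), I would verify the claim using the AR-quiver structure in Figure~\ref{fig:the-AR-quiver-of-2-term-complexes-over-Lambdan} together with Lemma~\ref{lem:derived-Hom-and-hammock}: the wings of $P(i)$ and $P(i-1)[1]$ are disjoint sub-translation-quivers, isomorphic to $\mod \Lambda_{n-i+1}$ and $\mod \Lambda_{i-1}$ respectively, whose hammocks do not meet each other's shifts. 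The tilting sizes $(n-i+1)$ and $(i-1)$ in the two wings sum to $n$, giving $|M| = n$.

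For the converse, I would use a counting identity. By \cite[Theorem 1]{ObaidNaumanFakiehRingel14} combined with Theorem~\ref{thm:correspondence-between-silting-and-support-tau-tilting}, the number of basic 2-term silting complexes over $\Lambda_n$ equals $s_2(\Lambda_n) = C_{n+1}$. Family (1) contributes $t(\Lambda_n) = C_n$, and family (2) contributes $C_n$ through the bijection $T \mapsto \tau^{-1} T$; since every tilting $T$ contains the projective-injective summand $P(n) = I(1)$, every $\tau^{-1} T$ contains $P(1)[1]$, making families (1) and (2) disjoint. Family (3), indexed by $i = 2, \ldots, n$, contributes $\sum_{i=2}^{n} C_{n-i+1} C_{i-1} = \sum_{j=1}^{n-1} C_j C_{n-j}$, and the Catalan convolution $\sum_{k=0}^{n} C_k C_{n-k} = C_{n+1}$ shows this sum equals $C_{n+1} - 2 C_n$. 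Summing over all three families gives $C_n + C_n + (C_{n+1} - 2 C_n) = C_{n+1}$, matching $s_2(\Lambda_n)$ exactly, so once pairwise disjointness of all three families is verified, they must exhaust all 2-term silting complexes.

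The main obstacle is establishing the pairwise disjointness of family (3) from family (2)—both contain shifted projective summands—and disjointness among distinct $i$'s within family (3). The argument should proceed structurally: in family (2), $M = \tau^{-1} T$ for a unique tilting $T$ whose injective summands determine the shifted projective summands of $M$ and whose non-injective summands determine the module part; in family (3), the pair $(M', M'')$ and the index $i$ are determined by identifying $P(i-1)[1]$ as the projective-injective vertex of the wing containing $M''$. A secondary subtlety is verifying that the wing of $P(i-1)[1]$ inside $K^{[-1,0]}(\proj \Lambda_n)$ is indeed a sub-translation-quiver isomorphic to $\mod \Lambda_{i-1}$, which requires carefully reading off the triangle of indecomposables immediately below $P(i-1)[1]$ in Figure~\ref{fig:the-AR-quiver-of-2-term-complexes-over-Lambdan}.
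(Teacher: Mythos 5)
Your route is sound but genuinely different from the paper's. The paper proves exhaustiveness structurally: given a basic 2-term silting complex $M$, it takes the minimal $i$ with $P(i)$ a direct summand, writes $M=M'\oplus M''$ with $M'$ the part lying in the wing of $P(i)$, uses $\Hom(M'',P(i)[1])=0$ together with Lemma~\ref{lem:derived-Hom-and-hammock} to force $M''$ into the wing of $P(i-1)[1]$, and then the count $|M'|\le n-i+1$, $|M''|\le i-1$, $|M'|+|M''|=n$ forces both parts to be tilting in their wings; if no $P(i)$ occurs at all, $\tau M$ is a tilting module, giving family (2). You instead verify the ``if'' direction and close by cardinality, using $s_2(\Lambda_n)=\frac{1}{n+2}\binom{2n+2}{n+1}=C_{n+1}$ (which the paper indeed quotes from \cite{ObaidNaumanFakiehRingel14} via Theorem~\ref{thm:correspondence-between-silting-and-support-tau-tilting}) and the convolution $C_{n+1}=\sum_{k=0}^{n}C_kC_{n-k}$; your arithmetic $2C_n+(C_{n+1}-2C_n)=C_{n+1}$ is correct. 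The trade-off: the paper's argument is self-contained given the hammock lemma and its case analysis is what gets reused in Section~\ref{s:silted-algebra-mutated-orientation} (families (I1)--(I3), (II), (III)), whereas your argument leans on the external enumeration and makes the ``if'' direction together with every distinctness and disjointness claim load-bearing --- any over- or undercount kills exhaustiveness --- but it avoids the structural analysis of an arbitrary silting complex altogether.

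Two points need repair, both fixable. First, a labeling slip: for $\Lambda_n$ the projective-injective is $P(1)=I(n)$, not $P(n)=I(1)$; consequently every member of family (2) contains $P(n)[1]$ (not $P(1)[1]$) as a summand. The disjointness of (1) and (2) survives unchanged. Second, your disjointness sketch for (2) versus (3) and for distinct $i$ within (3) is too vague as stated, because $M''$ may contain module summands, so ``module part versus shifted part'' does not directly recover the pair. Concretely one can argue: $M''$, being tilting in its wing, always contains the apex $P(i-1)[1]$, and all shifted summands of a family-(3) complex are $P(j)[1]$ with $j\le i-1\le n-1$, so $P(n)[1]$ never occurs there while it always occurs in family (2); equivalently, a family-(3) complex contains the projective module $P(i)$, whereas $\tau^{-1}T$ has no projective module summand (that would force $I(j)[-1]$ to be a summand of the module $T$). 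Moreover $i$ is recovered from $M$ as one plus the largest $j$ with $P(j)[1]$ a summand, no projective module lies in the wing of $P(i-1)[1]$, and the two wings share no indecomposables, so for fixed $i$ the pair $(M',M'')$ is determined by $M$. With these observations, and with the ``if'' direction checked via Lemma~\ref{lem:derived-Hom-and-hammock} (cross-vanishing of $\Hom(M',M''[1])$ and $\Hom(M'',M'[1])$) plus the criterion that a 2-term presilting complex with $n$ summands is silting, your counting argument completes correctly.
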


\begin{proof} The `if' part is straightforward. We prove the `only if' part. 
First notice that if $M$ is a presilting (respectively, silting) complex belonging to the wing of $P(1)$, then $M$ is a pretilting (respectively, tilting) module. Next, let $M$ be a basic 2-term silting complex. Suppose that $1\leq i\leq n$ is the minimal integer such that $P(i)$ is a direct summand of $M$.  Write $M=M'\oplus M''$, where $M'$ is the direct sum of the indecomposable direct summands which belongs to the wing of $P(i)$. Then $\Hom(M'',P(i)[1])=0$. This implies, by Lemma~\ref{lem:derived-Hom-and-hammock}, that $M''$ belongs to the wing of $P(i-1)[1]$. Therefore $M'$ is presilting in the wing of $P(i)$ and $M''$ is presilting in the wing of $P(i-1)[1])$, so they must be tilting in the corresponding wings. Moreover, $|M'|\leq n-i+1$, $|M''|\leq i-1$ and $|M'|+|M''|=n$. It follows that  $|M'|=n-i+1$, $|M''|=i-1$ and that $M'$ and $M''$ are tilting in the corresponding wings. Now if $i=1$, then $M''=0$ and we are in case (1); otherwise, we are in case (3). If $i$ does not exist, then $\tau M$ has to be a basic tilting module and  we are in case (2).
\end{proof}


\begin{proof}[Proof of Theorem~\ref{thm:the-number-of-silted-algebras-of-Lambda_n}]
It follows from Proposition~\ref{prop:2-term-silting-complexes-of-Lambda_n} that $\ca_s(\Lambda_n)$ contains $\ca_t(\Lambda_n)$ as a subset. Moreover, if $M=M'\oplus M''$ belongs to family (3) in Proposition~\ref{prop:2-term-silting-complexes-of-Lambda_n}, then $\Hom(M',M'')=0=\Hom(M'',M')$ and $\End(M)=\End(M')\times\End(M'')$. Therefore
\begin{align*}
\ca_s(\Lambda_n)=\ca_t(\Lambda_n)\sqcup \ca_{ns}(\Lambda_n),
\end{align*}
and
\begin{align*}
\ca_{ns}(\Lambda_n)&=\bigcup_{i=2}^n \ca_t(\Lambda_{n-i+1})\times_s\ca_t(\Lambda_{i-1})=\bigsqcup_{m=1}^{[\frac{n}{2}]} \ca_t(\Lambda_m)\times_s \ca_t(\Lambda_{n-m})\\
&=\begin{cases} \bigsqcup_{m=1}^k \ca_t(\Lambda_m)\times \ca_t(\Lambda_{n-m}), & \text{if}~n=2k+1,\\
(\bigsqcup_{m=1}^{k-1} \ca_t(\Lambda_m)\times \ca_t(\Lambda_{n-m}))\sqcup (\ca_t(\Lambda_k)\times_s\ca_t(\Lambda_k)), &\text{if}~n=2k.
\end{cases}
\end{align*}
Thus (a) holds true and (b) follows from the equalities above and Proposition~\ref{prop:the-number-of-non-hereditary-tilted-algebras} and Lemma~\ref{lem:symmetric-product}.
\end{proof}

\section{Silted algebras of type $\Gamma_{n}$}
\label{s:silted-algebra-mutated-orientation}

In this section, we classify up to isomorphism the basic silted algebras of type $\Gamma_{n}$ ($n\geq 2$), the path algebra of the quiver
\begin{center}
$\begin{xy}
(-20,0)*+{Q\colon}="0",
(-12,0)*+{1}="1",
(0,0)*+{2}="2",
(12,0)*+{\cdots}="3",
(28,0)*+{n-1}="4",
(42,0)*+{n.}="5",
\ar"2";"1", \ar"3";"2",\ar"4";"3",\ar"4";"5",
\end{xy}$
\end{center}
Then we give a recurrence formula to compute the number of these silted algebras. 

\smallskip
Throughout this section, assume $n\geq 2$. 

\subsection{Main result} 
Put
\begin{align*}
\ca_s(\Gamma_n)&=\{\text{basic silted algebras of type $\Gamma_n$}\}/\cong,
\end{align*}
and let $a_s(\Gamma_n)$ denote its cardinality.

\begin{theorem} 
\label{thm:silted-algebras-of-Gamman}
$\ca_s(\Gamma_n)=\cb_1\sqcup\cb_2\sqcup\cb_3\sqcup\cb_4\sqcup\cb_5$, where
\begin{itemize}
  \item [(1)] $\cb_1=\ca_t(\Lambda_n)$;
  \item [(2)] for $n\leq 4$, $\cb_2=\emptyset$, and for $n\geq 5$, $\cb_2$ consists of isoclasses of path algebras of elements in $\cm(n)$ (defined in Section~\ref{ss:an-operation});
  \item [(3)] $\cb_3=\bigcup_{m=1}^{n-1}(\ca_t^6(\Lambda_m)\times_s \ca_t(\Lambda_{n-m}))$;
  \item [(4)] $\cb_4=\bigsqcup_{m=5}^{n-1}(\cb_2(\Gamma_m)\times \ca_t(\Lambda_{n-m}))$;
  \item [(5)] $\cb_5=\bigcup_{m=1}^{n-2}(\ca_t^1(\Lambda_{m})\times_s \ca_t(\Lambda_{n-m-1}))\times\ca_t(\Lambda_1)$.
\end{itemize}
\end{theorem}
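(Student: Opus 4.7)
The plan is to first classify 2-term silting complexes over $\Gamma_n$, analogous to Proposition~\ref{prop:2-term-silting-complexes-of-Lambda_n}, and then compute their endomorphism algebras family by family. The starting point is that $\Gamma_n$ is obtained from $\Lambda_n$ by an APR-tilt at the source vertex $n$: the module $T = \Lambda_n/S(n)\oplus \tau^{-1}S(n)$ is a tilting $\Lambda_n$-module with $\End_{\Lambda_n}(T)\cong \Gamma_n$, which yields a triangle equivalence $K^b(\proj\Lambda_n)\simeq K^b(\proj\Gamma_n)$. Using this we draw the AR-quiver of $K^{[-1,0]}(\proj\Gamma_n)$ and perform a case analysis on each 2-term silting complex $M$ based on the smallest index $i$ for which $P_{\Gamma_n}(i)$ is a direct summand and on whether $P_{\Gamma_n}(n)$ appears. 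The key new phenomenon compared to $\Lambda_n$ is that vertex $n$ is now a sink, so $S(n)$ is injective and the AR-quiver develops an extra ``fork'' at the right end, which is what produces the rich list of five families.

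For each $M$, set $B=\End(M)$. If $M$ is already a 2-term tilting complex, Corollary~\ref{cor:endomorphism-algebra-of-2-term-tilting-is-tilted} together with the derived equivalence above shows that $B$ is tilted of type $\Lambda_n$, contributing to $\cb_1$. If $B$ is connected but not tilted, then by Theorem~\ref{thm: silted-algebras-to-tilted-and-shod-algebras} $B$ is strictly shod of global dimension $3$; a hammock analysis in $K^b(\proj\Gamma_n)$ (generalising Lemma~\ref{lem:derived-Hom-and-hammock}) shows that the Ext-quiver of $B$ is the underlying quiver with relation of an element of $\cm(n)=\cm_1(n)\sqcup \cm_2(n)\sqcup \cm_3(n)$. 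The three subfamilies correspond exactly to whether the APR-style summand at vertex $n$ attaches to a source, a sink, or an interior relation-vertex of the underlying tilted algebra of type $\Lambda_n$, reproducing the three cases in the definition of $\widetilde{\cm}_k(n)$ in Section~\ref{ss:an-operation}; the restriction to $\cm(n)\subseteq \widetilde{\cm}(n)$ is forced by the non-hereditarity requirement. If $B$ is not connected, we split $M$ according to the unique decomposition of its AR-support into a ``linear part'' supported on vertices $\{1,\ldots,n-m\}$ (giving a tilted algebra of type $\Lambda_{n-m}$ as one factor) and a ``fork part'' near vertex $n$ (a silted algebra of type $\Gamma_m$); invoking the theorem recursively on $\Gamma_m$ produces the three product families $\cb_3$, $\cb_4$, $\cb_5$, with $\cb_5$ reflecting the further three-factor split when the AR-support admits an additional APR-piece at the new sink $n$. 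The bookkeeping uses $\ca_t^1$, $\ca_t^6\subset \ca_t(\Lambda_m)$ from Section~\ref{s:silted-algebra-linear-orientation} to record which tilting modules on the linear part can coexist with the various fork-part configurations.

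The main obstacle is to verify that every element of $\cm(n)$ is actually realised as some $\End(M)$ (i.e.\ $\cb_2$ is not a proper subset of what is predicted), and that the five families are pairwise disjoint and exhaustive. Disjointness uses Corollary~\ref{cor:non-hereditary-QR} together with Lemmas~\ref{lem:non-hereditary-mutated-QR-1}, \ref{lem:non-hereditary-mutated-QR-2} and \ref{lem:non-hereditary-mutated-QR-3}: elements of $\cm(n)$ contain forbidden full subquivers with relation of the shape~\eqref{QR:type-X} or~\eqref{QR:type-box}, hence do not belong to $\cq(n)$ and so their path algebras are not tilted of type $\Lambda_n$; non-connectedness separates $\cb_3\cup \cb_4\cup \cb_5$ from $\cb_1\cup \cb_2$; and the three product families are distinguished from one another by the isomorphism type of their connected components via the symmetric product formalism of Lemma~\ref{lem:symmetric-product}. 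Exhaustiveness reduces to inverting the mutation $\mu$: given a strictly shod $B$ one must reconstruct a tilted algebra of type $\Lambda_n$ whose $\mu$-image is $B$ and then lift this back to an explicit 2-term silting complex over $\Gamma_n$. This reverse-engineering of $\mu$ is the most delicate part of the proof, and is precisely where the combinatorial groundwork of Section~\ref{s:rooted-quivers-with-relation} --- particularly the characterisations of $\widetilde{\cm}_k(n)$ via the operations $E(-)$, $E(-,-)$ and $E(-,-,-)$ --- is used.
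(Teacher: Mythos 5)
Your overall architecture (use the BGP reflection to embed $K^{[-1,0]}(\proj\Gamma_n)$ into $K^b(\proj\Lambda_n)$, classify 2-term silting complexes by cases, compute endomorphism algebras, recurse for the non-connected families) matches the paper's strategy, but there is a genuine structural error in how you produce $\cb_1$ and $\cb_2$. You claim that a 2-term tilting complex $M$ over $\Gamma_n$ has $\End(M)$ tilted \emph{of type $\Lambda_n$} (hence lands in $\cb_1$), and that a connected silted algebra which is not tilted must be strictly shod with Ext-quiver an element of $\cm(n)$ (hence constitutes $\cb_2$). Both halves of this dichotomy fail. Corollary~\ref{cor:endomorphism-algebra-of-2-term-tilting-is-tilted} only gives that $\End(M)$ is tilted of type $A'$ for \emph{some} hereditary $A'$ derived equivalent to $A$, i.e.\ tilted of type $\mathbb{A}_n$ for some orientation, not necessarily of type $\Lambda_n$. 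In the paper, the algebras in $\cb_2$ arise precisely from 2-term \emph{tilting} complexes $M=M'\oplus X$ with $X=\tau^{-1}P(n)[1]$ (family (II), Case 2): one checks $\Hom(M',X[-1])=0=\Hom(X,M'[-1])$, so $M$ is tilting, and the explicit hammock computation of the quiver with relations of $\End(M)$ shows it is the path algebra of $\mu(R)$ for $R\in\cq^2(n)$, i.e.\ of an element of $\widetilde\cm_k(n)$; the restriction to $\cm(n)$ is exactly the condition that $\End(M)$ is \emph{not} tilted of type $\Lambda_n$ (via Lemmas~\ref{lem:non-hereditary-mutated-QR-1}--\ref{lem:non-hereditary-mutated-QR-3} and Lemma~\ref{lem:genealogical-tree}). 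In particular no strictly shod algebras occur at all -- which is a stated consequence of the theorem and of \cite{ZhangLiu22} -- so a proof that characterises $\cb_2$ as the strictly shod output of Theorem~\ref{thm: silted-algebras-to-tilted-and-shod-algebras} cannot be completed: the step ``$B$ connected and not tilted, hence strictly shod with Ext-quiver in $\cm(n)$'' has no instances to analyse, and your proposed ``reverse-engineering of $\mu$ from a strictly shod $B$'' has nothing to invert.

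Concretely, what is missing is the paper's pivot object $X=S(n-1)[1]$ and the operator $(-)^*$ exchanging $X$ and $P(n)$ (Proposition~\ref{prop:equ-of-2-term-silting-complexes}), which reduces the classification of 2-term silting complexes over $\Gamma_n$ containing $X$ to the known classification of tilting modules and 2-term silting complexes over $\Lambda_n$. The trichotomy (I) $X\nmid M$, (II) $X\mid M$ and $M^*$ tilting, (III) $X\mid M$ and $M^*$ not tilting is what makes the case analysis terminate, and the quiver-with-relations computations in the three subcases of (II) (attaching $X$ to $I(n-2)$, to $\tau^{-2}P(j+2)$, etc.) are what actually identify $\cb_2$ with path algebras of $\cm(n)$ and simultaneously show $\cb_1\supseteq\ca_s^{{\rm II},2}\cup\ca_s^{{\rm III},1}$. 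Your outline of the non-connected families ($\cb_3,\cb_4,\cb_5$ via splitting off a linear part and recursing) is in the right spirit, but the bookkeeping with $\ca_t^1$, $\ca_t^6$ again depends on the explicit family-(II) computation (e.g.\ the identification $\ca_t^6(\Lambda_m)=\ca_t^4(\Lambda_m)\cup\ca_s^{{\rm II},2}(\Gamma_m)$), so it cannot be carried out independently of the step that your proposal gets wrong.
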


In particular, all elements in $\ca_s(\Gamma_n)$ are tilted algebras, or equivalently, there are no strictly shod algebras in $\ca_s(\Gamma_n)$. The equality $\ca_s(\Gamma_n)=\cb_1\sqcup\cb_2\sqcup\cb_3\sqcup\cb_4\sqcup\cb_5$ is a partition of $\ca_s(\Gamma_n)$ according the tilting type of its elements: elements in $\cb_1$ are tilted algebras of type $\Lambda_n$, elements in $\cb_2$ are tilted algebras of type $\mathbb{A}_n$ but not of type $\Lambda_n$, elements in $\cb_3$ are tilted algebras of type $\Lambda_{m}\times \Lambda_{n-m}$ ($1\leq m\leq n-1$), elements of $\cb_4$ are tilted of type $\mathbb{A}_m\times \Lambda_{n-m}$ but not of type $\Lambda_m\times\Lambda_{n-m}$ ($5\leq m\leq n-1$) and elements of $\cb_5$ are tilted algebras of type $\Lambda_m\times\Lambda_{n-m-1}\times\Lambda_1$.
The proof of Theorem~\ref{thm:silted-algebras-of-Gamman} will be given in Section~\ref{sss:proof-of-main-theorem}.

\medskip

Next we give formulas for the cardinalities $b_1,\ldots,b_5$ of $\cb_1,\ldots,\cb_5$.

\begin{theorem}
\label{thm:number-of-silted-algebras-of-Gamman}
We have $a_s(\Gamma_n)=b_1+b_2+b_3+b_4+b_5$, and
\begin{itemize} 
\item[(1)] $b_1=a_t(\Lambda_n)$, which is given in Proposition~\ref{prop:the-number-of-non-hereditary-tilted-algebras};
\item[(2)] $b_2=\begin{cases} 0 & \text{ if } n\leq 4,\\
2 & \text{ if } n=5,\\
t(\Lambda_{n-1})-t(\Lambda_{n-2})-2^{n-2}-2^{n-4}+\frac{n-2}{2} & \text{ if $n\geq 6$ is even},\\
t(\Lambda_{n-1})-t(\Lambda_{n-2})-2^{n-2}-2^{n-4}+2^{\frac{n-3}{2}-1}+\frac{n-3}{2} & \text{ if $n\geq 7$ is odd}.
\end{cases}$
\item[(3)] $b_3=\sum_{m=1}^{[\frac{n}{2}]}b_{3,m}$, where
\[
b_{3,m}=
\begin{cases}
a_t(\Lambda_m)a_t(\Lambda_{n-m}) & \text{if } m<\mathrm{min}\{4,\frac{n}{2}\},\\
a_t^6(\Lambda_m)a_t(\Lambda_{n-m})+a_t^6(\Lambda_{n-m})a_t(\Lambda_m)\\
\hspace{30pt}-a_t^6(\Lambda_m)a_t^6(\Lambda_{n-m}) & \text{if }4\leq m<\frac{n}{2}, \\
a_t^6(\Lambda_{\frac{n}{2}})a_t(\Lambda_{\frac{n}{2}})-\frac{a_t^6(\Lambda_{\frac{n}{2}})(a_t^6(\Lambda_{\frac{n}{2}})-1)}{2} &\text{if } m=\frac{n}{2}~(\text{and $n$ is even});
\end{cases}
\]
\item[(4)] $b_4=\sum_{m=5}^{n-1}b_2(m)\times a_t(\Lambda_{n-m})$, where $b_2(m)=|\cb_2(\Gamma_m)|$;
\item[(5)] $b_5=\sum_{m=1}^{[\frac{n-1}{2}]} b_{5,m}$, where
\[
b_{5,m}=\begin{cases}
a_t(\Lambda_{n-2}) & \text{ if } m=1,\\
t(\Lambda_{m-1})a_t(\Lambda_{n-m-1})+t(\Lambda_{n-m-2})a_t(\Lambda_m)&\\
\hspace{30pt}-t(\Lambda_{m-1})t(\Lambda_{n-m-2}) & \text{ if } 1<m< \frac{n-1}{2},\\
t(\Lambda_{\frac{n-3}{2}})a_{t}(\Lambda_{\frac{n-1}{2}})-\frac{t(\Lambda_{\frac{n-3}{2}})\times(t(\Lambda_{\frac{n-3}{2}})-1)}{2} & \text{ if } m=\frac{n-1}{2} \text{and $n\geq 5$ is odd}.
\end{cases}
\]
\end{itemize}
\end{theorem}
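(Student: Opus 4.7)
The plan is to use the disjoint decomposition $\ca_s(\Gamma_n)=\cb_1\sqcup\cdots\sqcup\cb_5$ supplied by Theorem~\ref{thm:silted-algebras-of-Gamman}, which immediately gives $a_s(\Gamma_n)=b_1+\cdots+b_5$, and then to compute each $b_i$ separately. The identity $b_1=a_t(\Lambda_n)$ follows at once from $\cb_1=\ca_t(\Lambda_n)$, with the explicit value of $a_t(\Lambda_n)$ being supplied by Proposition~\ref{prop:the-number-of-non-hereditary-tilted-algebras}.

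The computations of $b_3$ and $b_5$ are applications of Lemma~\ref{lem:symmetric-product}. Both of these sets are written as unions of symmetric products parametrised by an index $m$ together with its ``complement'' $n-m$ (for $b_3$) or $n-m-1$ (for $b_5$). I would regroup the union by the unordered pair of factor sizes, keeping only one representative with $m\leq[n/2]$ (for $b_3$) or $m\leq[(n-1)/2]$ (for $b_5$). For a pair of distinct sizes the ambient sets of $\Lambda$-tilted algebras are disjoint (algebras of different dimension cannot be isomorphic), so Lemma~\ref{lem:symmetric-product}(e) yields the three-term cross expression; for the diagonal pair of equal sizes (when it exists), Lemma~\ref{lem:symmetric-product}(d) applies. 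The special shape of $b_{3,m}$ for $m<4$ follows from $a_t^6(\Lambda_m)=a_t(\Lambda_m)$ when $m\leq 3$, which collapses the generic three-term expression to $a_t(\Lambda_m)a_t(\Lambda_{n-m})$. The computation of $b_4$ is more direct: an element of $\cb_4$ factors as $A\times B$ with $A\in\cb_2(\Gamma_m)$ (an algebra with $m$ vertices, not $\Lambda_m$-tilted) and $B\in\ca_t(\Lambda_{n-m})$ (an algebra with $n-m$ vertices, $\Lambda_{n-m}$-tilted); these two factors are always distinguishable (by size when $m\neq n-m$, by tilting type when $m=n-m$), so the union over $m$ is disjoint and the direct products multiply to give the stated sum.

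The bulk of the work, and the main obstacle, is the computation of $b_2$, which by definition counts isomorphism classes of quivers with relation underlying elements of $\cm(n)$; equivalently $b_2=|\cm(n)/{\sim}|$, where $\sim$ is the equivalence of being isomorphic as quivers with relation (the elements of $\cm(n)$ being automatically pairwise distinct as \emph{rooted} quivers with relation via the bijection $\mu$). The starting point is $|\cm(n)|=|\cq^3(n)|=t(\Lambda_{n-1})-t(\Lambda_{n-2})-2^{n-2}+1$, supplied by Lemma~\ref{lem:number-of-tilting-modules-without-P(n-1)-as-summand}. Using the stratification $\cm(n)=\cm_1(n)\sqcup\cm_2(n)\sqcup\cm_3(n)$ and the parametrisations of the three strata via $E(R_1)$, $E(R_1,R_2)$ and $E(R_1,R_2,R_3)$ in Lemmas~\ref{lem:non-hereditary-mutated-QR-1}--\ref{lem:non-hereditary-mutated-QR-3}, I would enumerate the non-trivial isomorphism coincidences. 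Two sources of identifications are expected: reflections of hereditary sub-blocks via the involution $R\mapsto\bar R$ of Lemma~\ref{lem:hereditary-QR}(b), producing a bulk contribution of order $2^{n-4}$; and palindromic self-reflections counted by Lemma~\ref{lem:hereditary-QR}(c), which together with a handful of small symmetric configurations produce the correction $2^{(n-5)/2}$ in the odd case and the linear term $(n-2)/2$ or $(n-3)/2$. The main obstacle is precisely this symmetry bookkeeping: for each element of $\cm_k(n)$ whose underlying quiver with relation admits a non-trivial automorphism, one has to decide whether the image of the root under the automorphism is again an admissible root (and if so in which stratum $\cm_{k'}(n)$), and then aggregate the coincidences without double counting. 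Once this has been carried out, the claimed formulas for $b_2$, and hence the full formula for $a_s(\Gamma_n)$, follow by elementary arithmetic.
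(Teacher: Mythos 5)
Your treatment of parts (1), (3), (4) and (5) is correct and essentially identical to the paper's: the decomposition of Theorem~\ref{thm:silted-algebras-of-Gamman} gives $a_s(\Gamma_n)=b_1+\cdots+b_5$, the regrouping by unordered pairs of sizes together with Lemma~\ref{lem:symmetric-product} (d) and (e) yields $b_3$ and $b_5$ (using $a_t^1(\Lambda_m)=t(\Lambda_{m-1})$ and $a_t^6(\Lambda_m)=a_t(\Lambda_m)$ for $m\leq 3$), and $b_4$ is immediate since Theorem~\ref{thm:silted-algebras-of-Gamman}(4) already exhibits $\cb_4$ as a disjoint union of direct products.

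Part (2), however, is where almost all of the work lies, and your proposal does not prove it: you correctly identify that $b_2$ is the number of isomorphism classes, as quivers with relation, of elements of $\cm(n)$, and you start from $|\cm(n)|=t(\Lambda_{n-1})-t(\Lambda_{n-2})-2^{n-2}+1$ via Lemma~\ref{lem:number-of-tilting-modules-without-P(n-1)-as-summand}, but you then declare the ``symmetry bookkeeping'' to be the main obstacle and only predict the expected shape of the corrections. The missing step is precisely the classification of all isomorphism coincidences inside $\cm(n)$: the paper proves, by a case analysis over the strata $\cm_1(n),\cm_2(n),\cm_3(n)$ using the parametrisations of Lemmas~\ref{lem:non-hereditary-mutated-QR-1}--\ref{lem:non-hereditary-mutated-QR-3}, that an element of $\cm(n)$ is isomorphic (as a quiver with relation) to a \emph{different} element if and only if it lies in one of the sets $\mathcal{R}(m,l)$, where the coincidence is $G(m,l,R)\cong G(l,m,\bar{R})$, or it is one of the two exceptional quivers with relation in \eqref{QR:exceptional-case}, and moreover that each coincidence class has exactly two members. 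Without this classification the factor $\tfrac12$, the cardinalities $|\mathcal{R}(m,l)|$ and $|\mathcal{R}(m,m)|$ (where the involution $R\mapsto\bar R$ and the palindrome count of Lemma~\ref{lem:hereditary-QR}(c) actually enter), and the extra ``$-1$'' cannot be justified, so the closed formulas for $b_2$ are not derived. Your heuristic attribution of the terms is also off: the linear summands $\tfrac{n-2}{2}$, $\tfrac{n-3}{2}$ arise from evaluating $\sum_{i}(i-1)2^{n-3-i}$ over the off-diagonal classes, not from ``small symmetric configurations'', and the single exceptional pair \eqref{QR:exceptional-case} (responsible for the $-1$) is absent from your plan; establishing that no further coincidences exist (e.g.\ that $\cm_3(n)$ contributes only that one pair, and that no element admits more than one partner) is exactly the content you would still have to supply.
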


The proof of Theorem~\ref{thm:number-of-silted-algebras-of-Gamman} will be given in Section~\ref{ss:cardinatities}. 
The following table contains the first values of $b_1,\ldots,b_5$ and $a_s(\Gamma_n)$.

$$\begin{tabular}{|c||c|c|c|c|c|c|c|c|c| p{<20>}}
\hline
$n$ & $2$ & $3$ & $4$ & $5$ & $6$ & $7$ & $8$ & $9$ & $10$\\
\hline
$b_1$ & 1 & 4 & 10 & 36 & 116 & 401 & 1366 & 4742 & 16540 \\
\hline
$b_2$ & 0  & 0 & 0 & 2  & 10  & 54 & 220 & 848 & 3116 \\
\hline
$b_3$ & 1  & 1 & 5 & 14  & 56  & 192 & 710 & 2555 & 9340\\
\hline
$b_4$ & 0  & 0 & 0 & 0  & 2  & 12 & 72 & 334 & 1456\\
\hline
$b_5$ & 0  & 1 & 1 & 5  & 14  & 53 & 182 & 657 & 2333\\
\hline
$a_s(\Gamma_n)$ & 2  & 6 & 16 & 57  & 198  & 712 & 2550 & 9136 & 32785 \\
\hline
\end{tabular} $$

\subsection{Strategy of the proof}
\label{ss:strategy}
Our proof of of Theorems~\ref{thm:silted-algebras-of-Gamman} and~\ref{thm:number-of-silted-algebras-of-Gamman} depend heavily on the classification of two-term silting complexes over $\Lambda_n$ and the classification of tilted algebras of type $\Lambda_n$.

Note that $Q$ is obtained from $\overrightarrow{\mathbb{A}}_{n}$ by reversing the arrow starting from the unique source $n$. So there is a BGP reflection functor $F\colon K^{b}(\proj \Gamma_{n})\xrightarrow{\simeq}K^{b}(\proj \Lambda_n)$, see \cite{BernsteinGelfandPonomarev73}. It follows that the AR-quiver of $K^{[-1,0]}(\proj \Gamma_{n})$ can be identified with the full subquiver drawn in Figure~\ref{fig:AR-quiver-Gamman} of the AR-quiver of $K^b(\proj \Lambda_n)$ and $K^{[-1,0]}(\proj \Gamma_{n})$ can be identified with the additive closure in $K^{b}(\proj \Lambda_n)$ of the indecomposable objects in this AR-quiver. We will use this AR-quiver to study the silted algebras of type $\Gamma_n$. For example, when we say $M$ is a 2-term silting complex over $\Gamma_n$, we mean that $M$ is a silting complex over $\Lambda_n$ whose direct summands belong to this subquiver. We also identify the AR-quiver of $\mod \Gamma_n$ with its full subquiver in the framed area. Note that there is a distinguished vertex $X=\tau^{-1}P(n)[1]=S(n-1)[1]$. For an indecomposable object $Y$, it follows from Lemma~\ref{lem:derived-Hom-and-hammock} that $\Hom(Y,X)\neq 0$ if and only if $Y$ belongs to the upper ray of $S(n-2)$, and in this case $\Hom(Y,X)=k$.

\begin{figure}[htbp]
\begin{tikzpicture}[scale=1]
\draw (0,0) node{$P(1)$};
\draw (2,0) node{$P(n)[1]$};
\draw (4,0) node{$X$};
\draw (-1,-1) node{$P(2)$};
\draw (1,-1) node{$I(n-1)$};
\draw (3,-1) node{$P(n-1)[1]$};
\draw (-2.15,-2.15) node{\begin{rotate}{45}$\cdots$\end{rotate}};
\draw (0,-2) node{$\tau^{-1}P(3)$};
\draw (2,-2) node{$I(n-2)$};
\draw (4,-2) node{$P(n-2)[1]$};
\draw (-3,-3) node{$P(n-2)$};
\draw (-1.15,-3.15) node{\begin{rotate}{45}$\cdots$\end{rotate}};
\draw (1,-3) node{$\tau^{-1}P(4)$};
\draw (3,-3) node{$I(n-3)$};
\draw (5.05,-2.85) node{$\ddots$};
\draw (-4,-4) node{$P(n-1)$};
\draw (-2,-4) node{$\tau^{-1}P(n-1)$};
\draw (-0.15,-4.15) node{\begin{rotate}{45}$\cdots$\end{rotate}};
\draw (2,-4) node{$\cdots$};
\draw (4.05,-3.85) node{$\ddots$};
\draw (6,-4) node{$P(2)[1]$};
\draw (-3,-5) node{$S(n-1)$};
\draw (-1,-5) node{$S(n-2)$};
\draw (1,-5) node{$\cdots$};
\draw (3,-5) node{$\cdots$};
\draw (5,-5) node{$I(1)$};
\draw (7,-5) node{P(1)[1]};
\draw[line width=0.6pt][->] (-0.8,-0.7) -- (-0.3,-0.2);
\draw[line width=0.6pt][->] (1.2,-0.7) -- (1.7,-0.2);
\draw[line width=0.6pt][->] (3.2,-0.7) -- (3.7,-0.2);
\draw[line width=0.6pt][->] (0.3,-0.2) -- (0.8,-0.7);
\draw[line width=0.6pt][->] (2.3,-0.2) -- (2.8,-0.7);

\draw[line width=0.6pt][->] (-0.7,-1.2) -- (-0.2,-1.7);
\draw[line width=0.6pt][->] (1.3,-1.2) -- (1.8,-1.7);
\draw[line width=0.6pt][->] (3.3,-1.2) -- (3.8,-1.7);

\draw[line width=0.6pt][->] (-1.7,-2.2) -- (-1.2,-2.7);
\draw[line width=0.6pt][->] (0.3,-2.2) -- (0.8,-2.7);
\draw[line width=0.6pt][->] (2.3,-2.2) -- (2.8,-2.7);
\draw[line width=0.6pt][->] (4.3,-2.2) -- (4.8,-2.7);

\draw[line width=0.6pt][->] (-2.7,-3.2) -- (-2.2,-3.7);
\draw[line width=0.6pt][->] (-0.7,-3.2) -- (-0.2,-3.7);
\draw[line width=0.6pt][->] (1.3,-3.2) -- (1.8,-3.7);
\draw[line width=0.6pt][->] (3.3,-3.2) -- (3.8,-3.7);
\draw[line width=0.6pt][->] (5.3,-3.2) -- (5.8,-3.7);

\draw[line width=0.6pt][->] (-3.7,-4.2) -- (-3.2,-4.7);
\draw[line width=0.6pt][->] (-1.7,-4.2) -- (-1.2,-4.7);
\draw[line width=0.6pt][->] (0.3,-4.2) -- (0.8,-4.7);
\draw[line width=0.6pt][->] (2.3,-4.2) -- (2.8,-4.7);
\draw[line width=0.6pt][->] (4.3,-4.2) -- (4.8,-4.7);
\draw[line width=0.6pt][->] (6.3,-4.2) -- (6.8,-4.7);

\draw[line width=0.6pt][->] (-3.7,-4.2) -- (-3.2,-4.7);
\draw[line width=0.6pt][->] (-1.7,-4.2) -- (-1.2,-4.7);
\draw[line width=0.6pt][->] (0.3,-4.2) -- (0.8,-4.7);
\draw[line width=0.6pt][->] (2.3,-4.2) -- (2.8,-4.7);
\draw[line width=0.6pt][->] (4.3,-4.2) -- (4.8,-4.7);
\draw[line width=0.6pt][->] (6.3,-4.2) -- (6.8,-4.7);

\draw[line width=0.6pt][->] (-1.8,-1.7) -- (-1.3,-1.2);
\draw[line width=0.6pt][->] (0.2,-1.7) -- (0.7,-1.2);
\draw[line width=0.6pt][->] (2.2,-1.7) -- (2.7,-1.2);

\draw[line width=0.6pt][->] (-2.8,-2.7) -- (-2.3,-2.2);
\draw[line width=0.6pt][->] (-0.8,-2.7) -- (-0.3,-2.2);
\draw[line width=0.6pt][->] (1.2,-2.7) -- (1.7,-2.2);
\draw[line width=0.6pt][->] (3.2,-2.7) -- (3.7,-2.2);

\draw[line width=0.6pt][->] (-3.8,-3.7) -- (-3.3,-3.2);
\draw[line width=0.6pt][->] (-1.8,-3.7) -- (-1.3,-3.2);
\draw[line width=0.6pt][->] (0.2,-3.7) -- (0.7,-3.2);
\draw[line width=0.6pt][->] (2.2,-3.7) -- (2.7,-3.2);
\draw[line width=0.6pt][->] (4.2,-3.7) -- (4.7,-3.2);

\draw[line width=0.6pt][.] (3.2,0.5) -- (-0.5,0.5);
\draw[line width=0.6pt][.] (3.2,0.5) -- (1.8,-1);
\draw[line width=0.6pt][.] (1.8,-1) -- (6.3,-5.5);
\draw[line width=0.6pt][.] (6.3,-5.5) -- (-3.5,-5.5);
\draw[line width=0.6pt][.] (-3.5,-5.5) -- (-5,-4);
\draw[line width=0.6pt][.] (-0.5,0.5) -- (-5,-4);
\end{tikzpicture}
\caption{The Auslander--Reiten quiver of $K^{[-1,0]}(\proj \Gamma_n)$}
\label{fig:AR-quiver-Gamman}
\end{figure}
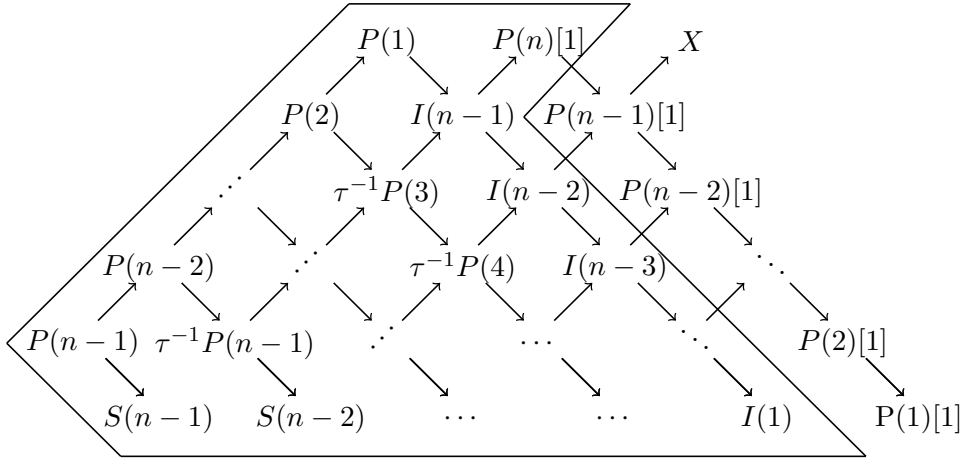

Let $T\in K^{[-1,0]}(\proj \Lambda_n)$ be a basic presilting complex which does not contain $P(n)$ as a direct summand. Put 
$M=T\oplus X$ and $N=T\oplus P(n)$. We will write $M=N^*$ and $N=M^*$ and view $(-)^*$ as an operator.

\begin{proposition}\label{prop:equ-of-2-term-silting-complexes}
$M$ is a basic 2-term silting complex over $\Gamma_{n}$ if and only if $M^*$ is a basic 2-term silting complex over $\Lambda_n$.
\end{proposition}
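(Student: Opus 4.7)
My plan is to reduce the silting condition to the presilting (Hom-vanishing) condition together with the cardinality condition $|M|=n$, and then show these are equivalent for $M$ and $M^*$. Since $|M|=|M^*|=|T|+1$ and both $\Gamma_n$ and $\Lambda_n$ have $n$ simples, the cardinality conditions coincide; by the fact noted just after Definition~\ref{def:silting-objects}, a two-term presilting complex with cardinality $|A|$ is automatically silting, so the generation condition comes for free. Both complexes sit inside $K^b(\proj \Lambda_n)$ after the BGP identification, and all Hom computations are carried out there.

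For $M^*=T\oplus P(n)$, most presilting conditions are automatic: $\Hom(T,T[>0])=0$ is given; $\Hom(P(n),P(n)[>0])=0$ since $P(n)$ is projective; $\Hom(P(n),T[>0])=0$ since $P(n)$ is projective and $T$ is two-term; and $\Hom(T,P(n)[i])=0$ for $i\geq 2$ since $T$ is two-term while $P(n)[i]$ is concentrated in degree $-i\leq -2$. Thus only $\Hom(T,P(n)[1])=0$ remains. For $M=T\oplus X$, the self-vanishing $\Hom(X,X[>0])=0$ follows from $\Ext^{\geq 1}(S(n-1),S(n-1))=0$. Applying $\Hom(T,-)$ to the shifted triangles
\[
P(n)[j]\to P(n-1)[j]\to S(n-1)[j]\to P(n)[j+1]
\]
(coming from the projective resolution of $S(n-1)$) and using that $\Hom(T,P(n)[j])=\Hom(T,P(n-1)[j])=0$ for $j\geq 2$ yields $\Hom(T,S(n-1)[j])=0$ for $j\geq 2$, hence $\Hom(T,X[i])=\Hom(T,S(n-1)[i+1])=0$ for every $i\geq 1$. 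For $\Hom(X,T[i])$, heredity gives the vanishing when $i\geq 3$; the crucial case is $i=2$. Using the triangle $H^{-1}(T)[1]\to T\to H^0(T)\to H^{-1}(T)[2]$ of a two-term complex, heredity, and AR duality together with $\tau S(n-1)=P(n)$, one obtains
\[
\Hom(X,T[2])\cong \Ext^1(S(n-1),H^0(T))\cong D\Hom(H^0(T),P(n)).
\]
This vanishes because the hypothesis that $T$ is basic with no $P(n)$ summand forces, by the radicality of the differential of the minimal representative of $T$, that $P(n)\notin\add(T^0)$, so $S(n)$ does not appear in the top of $H^0(T)$. Hence the only nontrivial condition remaining for $M$ is $\Hom(X,T[1])=0$.

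Finally, Serre duality in the Dynkin derived category $D^b(\mod \Lambda_n)$, whose Serre functor is $\tau[1]$, combined with $\tau X=\tau(\tau^{-1}P(n)[1])=P(n)[1]$, yields
\[
\Hom(X,T[1])\cong D\Hom(T,\tau X)=D\Hom(T,P(n)[1]).
\]
Thus the two nontrivial conditions $\Hom(T,P(n)[1])=0$ and $\Hom(X,T[1])=0$ are equivalent, completing the proof. The main obstacle I anticipate is the $\Hom(X,T[2])$-vanishing step: this is the only place where the hypothesis ``$T$ has no $P(n)$ summand'' is essentially used, and it requires careful bookkeeping with the cohomology triangle of the two-term complex together with the AR formula to conclude that $S(n)$ does not appear in the top of $H^0(T)$.
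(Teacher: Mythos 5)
Your proof is correct, and its decisive step is the same as the paper's: both arguments reduce everything to the two potentially nontrivial conditions $\Hom(T,P(n)[1])=0$ and $\Hom(X,T[1])=0$ and identify them via Serre duality, $\Hom(X,T[1])\cong D\Hom(T,\tau X)=D\Hom(T,P(n)[1])$. The differences are in the bookkeeping. The paper treats $M$ as a 2-term object of $K^{[-1,0]}(\proj \Gamma_n)$ (all summands of $T$, and $X$ itself, lie in the subquiver of Figure~\ref{fig:AR-quiver-Gamman} precisely because $P(n)\notin\add T$), so only degree-one Hom spaces need checking, and it disposes of $\Hom(T,X[1])$ via $\Hom(T,X[1])\cong D\Hom(X,\tau T)=0$ rather than by your degree argument. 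You instead verify vanishing in all positive degrees inside $K^b(\proj \Lambda_n)$, which is what generates your $\Hom(X,T[2])$ analysis (cohomology triangle, heredity, AR formula, and minimality forcing $P(n)\notin\add T^0$). That argument is valid, but the step you single out as the main obstacle is in fact automatic: under the BGP identification both $X$ and $T$ are 2-term over $\Gamma_n$, so $\Hom(X,T[i])=0$ for all $i\geq 2$ for pure degree reasons, the hypothesis $P(n)\notin\add T$ entering only through the fact that it places $T$ in $K^{[-1,0]}(\proj \Gamma_n)$. On the other hand, you make explicit the presilting-to-silting reduction via $|M|=|M^*|=|T|+1$ and the criterion recalled after Definition~\ref{def:silting-objects}, which the paper leaves implicit; that is a small gain in completeness.
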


\begin{proof}
We need to show that
\begin{center}
$\Hom(T\oplus P(n),(T\oplus P(n))[1])=0\Longleftrightarrow\Hom(T\oplus X,(T\oplus X)[1])=0$.
\end{center}
Since $\Hom(T,T[1])=0$, $\Hom(X,X[1])=0$ and $\Hom(P(n),P(n)[1])=0$, we only need to prove that
\begin{center}
$\Hom(T,P(n)[1])\oplus \Hom(P(n),T[1])=0\Longleftrightarrow\Hom(T,X[1])\oplus \Hom(X,T[1])=0$.
\end{center}
This follows from the facts $\Hom(P(n),T[1])=0$, $\Hom(T,X[1])\cong D\Hom(X,\tau T)=0$, and $\Hom(X,T[1])\cong D\Hom(T,\tau X)=D\Hom(T,P(n)[1])$.
\end{proof}

Using $X$ and $(-)^*$ we divide basic 2-term silting complexes $M$ over $\Gamma_n$ into the following three families:
\begin{itemize}
\item[(I)] $M$ is a basic 2-term silting complex over $\Gamma_n$ such that $X$ is not a direct summand of $M$;
\item[(II)] $M$ is a basic 2-term silting complex over $\Gamma_n$ such that $X$ is a direct summand of $M$ and $M^*$ is a tilting module over $\Lambda_n$;
\item[(III)] $M$ is a basic 2-term silting complex over $\Gamma_n$ such that $X$ is a direct summand of $M$ and $M^*$ is not a tilting module over $\Lambda_n$.
\end{itemize}
For $k=\mathrm{I,II,III}$, put
\begin{align*}
\ca_s^k(\Gamma_n)&=\{\End(M)\mid \text{$M$ belongs to the family (k)}\}/\cong.
\end{align*}
It is clear that 
\[
\ca_s(\Gamma_n)=\ca_s^{\rm I}(\Gamma_n)\cup \ca_s^{\rm II}(\Gamma_n)\cup\ca_s^{\rm III}(\Gamma_n).
\]
In Section~\ref{sec:silted-algebras-of-Gamman}, we will classify basic 2-term silting complexes in the families (I), (II) and (III), respectively, and describe $\ca_s^{\rm I}(\Gamma_n)$, $\ca_s^{\rm II}(\Gamma_n)$ and $\ca_s^{\rm III}(\Gamma_n)$. We see as a consequence that all these silted algebras are tilted algebras. Then we partition $\ca_s(\Gamma_n)$ according to the tilting type of its elements to obtain $\cb_1,\ldots,\cb_5$, thus proving Theorem~\ref{thm:silted-algebras-of-Gamman}. In Section~\ref{ss:cardinatities} we will calculate the cardinalities of $\cb_1,\ldots,\cb_5$ and prove Theorem~\ref{thm:number-of-silted-algebras-of-Gamman}. 

\subsection{The classification of silted algebras of type $\Gamma_{n}$}\label{sec:silted-algebras-of-Gamman}
In this subsection we classify basic 2-term silting complexes over $\Gamma_n$ according to the families (I), (II) and (III), and compute their endomorphism algebras. 

\subsubsection{Family {\rm (I)}}
\label{sss:family-I}
Let $M$ be a basic 2-term silting complex over $\Gamma_n$ and assume that  $X$ is not a direct summand of $M$. Then $M$ is a 2-term silting complex over $\Lambda_n$. So by Proposition~\ref{prop:2-term-silting-complexes-of-Lambda_n}, $M$ belongs to one of the following three families:
\begin{itemize}
\item[(I1)] $T$, a basic tilting module over $\Lambda_n$ which does not contain $P(n)$ as a direct summand;

\item[(I2)] $\tau^{-1} T$, where $T$ is a basic tilting module over $\Lambda_n$;

\item[(I3)] $M=M'\oplus M''$, where $M'$ is a basic tilting module of the wing of $P(i)$ with $2\leq i\leq n-1$ which does not contain $P(n)$ as a direct summand and $M''$ is a basic tilting module of the wing of $P(i-1)[1]$.
\end{itemize}
Notice that the family (I3) is empty if $n=2$. Let $\ca_s^{\rm{I},1}(\Gamma_n)$ (respectively, $\ca_s^{\rm{I},2}(\Gamma_n)$) be the set of isoclasses of $\End(M)$, where $M$ belongs to the first two families (respectively, the third family). Then
\begin{align}
\label{eqn:ca-I}
\ca_s^{\rm I}(\Gamma_n)&=\ca_s^{\rm{I},1}(\Gamma_n)\sqcup\ca_s^{\rm{I},2}(\Gamma_n),\nonumber\\
\ca_s^{\rm{I},1}(\Gamma_n)&=\ca_t^4(\Lambda_n)\cup\ca_t(\Lambda_n)=\ca_t(\Lambda_n),\\
\ca_s^{\rm{I},2}(\Gamma_n)&=\bigcup_{m=2}^{n-1}\ca_t^4(\Lambda_m)\times_s \ca_t(\Lambda_{n-m}).\nonumber
\end{align}
We recall that $\ca_t^4(\Lambda_n)$ is the set of isoclasses of endomorphism algebras of basic tilting modules over $\Lambda_m$ which does not contain $P(n)$ as a direct summand. 

As corollaries, we have

\begin{corollary}\label{cor:tilted-are-silted-of-B}
All tilted algebras of type $\Lambda_n$ are silted of type $\Gamma_{n}$.
\end{corollary}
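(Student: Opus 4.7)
The plan is to read the corollary directly off the identity $\ca_s^{\mathrm{I},1}(\Gamma_n) = \ca_t^4(\Lambda_n)\cup \ca_t(\Lambda_n) = \ca_t(\Lambda_n)$ recorded in~(4.1), so essentially no additional work is required beyond unpacking what family (I2) contributes.

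Concretely, I would argue as follows. Let $B$ be an arbitrary basic tilted algebra of type $\Lambda_n$ and write $B\cong \End_{\Lambda_n}(T)$ for some basic tilting $\Lambda_n$-module $T$. Set $M:=\tau^{-1}T$, an object of $K^b(\proj \Lambda_n)$. By Proposition~\ref{prop:2-term-silting-complexes-of-Lambda_n}, $M$ is a basic 2-term silting complex over $\Lambda_n$ lying in family (I2) of Section~\ref{sss:family-I}; in particular none of its indecomposable direct summands is $P(n)$, and each summand lies in the subquiver of Figure~\ref{fig:AR-quiver-Gamman} describing the AR-quiver of $K^{[-1,0]}(\proj \Gamma_n)$. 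Hence $M$ is a basic 2-term silting complex over $\Gamma_n$. Since the Auslander--Reiten translate $\tau$ is an auto-equivalence of $K^b(\proj \Lambda_n)$, we have
\[
\End_{K^b(\proj \Lambda_n)}(M)=\End_{K^b(\proj \Lambda_n)}(\tau^{-1}T)\cong \End_{\Lambda_n}(T)\cong B,
\]
so $B$ is realised as the endomorphism algebra of a 2-term silting complex over $\Gamma_n$, i.e.\ $B\in \ca_s(\Gamma_n)$.

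There is no real obstacle here; the only point that merits a line of justification is that $\tau^{-1}T$ sits inside $K^{[-1,0]}(\proj \Gamma_n)$ via the BGP identification, and this is visible from the figure since family (I2) consists precisely of the shifted tilting modules whose indecomposable summands are either of the form $\tau^{-1}L$ for $L$ a non-injective indecomposable $\Lambda_n$-module or of the form $P(i)[1]$ for $1\le i\le n$. Accordingly the proof amounts to a single line citing~(4.1) (or equivalently, the argument above).
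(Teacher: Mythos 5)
Your proposal is correct and follows essentially the same route as the paper: the corollary is read off from the family (I) analysis, where family (I2) consists of the complexes $\tau^{-1}T$ with $T$ a basic tilting $\Lambda_n$-module, so $\ca_s^{\mathrm{I},1}(\Gamma_n)=\ca_t^4(\Lambda_n)\cup\ca_t(\Lambda_n)=\ca_t(\Lambda_n)$. Your added verification that the summands of $\tau^{-1}T$ avoid $P(n)$ and lie in the subquiver of Figure~\ref{fig:AR-quiver-Gamman}, and that $\End(\tau^{-1}T)\cong\End(T)$, is exactly the content implicit in the paper's derivation.
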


\begin{corollary}
\label{cor:number-of-tilted-algebra-of-tyep-Gamman}
For $n\geq 3$, the number of tilted algebras of type $\Gamma_{n}$ is
$$a_{t}(\Gamma_{n})=a_{t}(\Lambda_n) - t(\Lambda_{n{-}1}) +t(\Lambda_{n{-}2}).$$
\end{corollary}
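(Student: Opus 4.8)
The plan is to count the isoclasses of endomorphism algebras $\End_{\Gamma_n}(T)$ as $T$ runs over the basic tilting $\Gamma_n$-modules, organising them by a single distinguished indecomposable summand and transporting the bulk of the count back to $\Lambda_n$ via the BGP reflection functor $F$. First I would record that, since $\Gamma_n$ is connected, every tilted algebra of type $\Gamma_n$ is connected, and that a tilting $\Gamma_n$-module is a $2$-term silting complex all of whose summands lie in $\mod\Gamma_n$; in particular the distinguished object $X$ is never such a summand, so all these complexes belong to family (I). The split is then according to whether the simple injective module $S(n)=I_{\Gamma_n}(n)$ at the sink $n$ is a direct summand of $T$.

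For the tilting modules $T$ not having $S(n)$ as a summand, I would invoke that $F$ restricts to a $\Hom$- and $\Ext^1$-preserving bijection between the indecomposable $\Gamma_n$-modules different from $S(n)$ and the indecomposable $\Lambda_n$-modules different from $P(n)=S(n)$ (the simple projective at the source $n$ of $\Lambda_n$). This bijection carries such $T$ to the tilting $\Lambda_n$-modules not having $P(n)$ as a summand and preserves endomorphism algebras, so the set of endomorphism algebras obtained in this case is exactly $\ca_t^4(\Lambda_n)$, of cardinality $a_t^4(\Lambda_n)=a_t(\Lambda_n)-t(\Lambda_{n-1})+1$ by Lemma~\ref{lem:tilted-algebra-without-Pn}.

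For the tilting modules $T=S(n)\oplus T''$ that do contain the simple injective $S(n)$, the reflection functor no longer applies, and I would instead pass to $\Gamma_n\op$ by $k$-duality: this turns $S(n)$ into the simple projective at the source $n$ of $\Gamma_n\op$ and turns $T$ into a tilting $\Gamma_n\op$-module containing it. Deleting this source, by the dual of Remark~\ref{rek:tilting-module-over-A-turn-to-B}, identifies such modules with tilting modules over the full subquiver on $\{1,\dots,n-1\}$, which is of type $\Lambda_{n-1}$. Analysing how attaching the extra source modifies the endomorphism quiver, I would show that the endomorphism algebras newly obtained here — those not already lying in $\ca_t^4(\Lambda_n)$ — are precisely the non-hereditary members of $\ca_t^1(\Lambda_{n-1})$, and hence number $a_t^1(\Lambda_{n-1})-1=t(\Lambda_{n-2})-1$ by Lemma~\ref{lem:tilting-modules-and-tilted-algebras-with-P(n)}.

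Combining the two cases then gives
\[
a_t(\Gamma_n)=a_t^4(\Lambda_n)+\bigl(t(\Lambda_{n-2})-1\bigr)=a_t(\Lambda_n)-t(\Lambda_{n-1})+t(\Lambda_{n-2}).
\]
The main obstacle is the second case. Both the explicit description of $\End_{\Gamma_n}(S(n)\oplus T'')$ in terms of the smaller tilted algebra, and above all the bookkeeping of which of these algebras already occur among the reflected ones in $\ca_t^4(\Lambda_n)$ (so as to avoid double counting), require a careful study of how the sink $n$ enters the endomorphism quiver; this overlap is exactly what reduces the naive contribution to the $t(\Lambda_{n-2})-1$ genuinely new algebras, and pinning it down precisely is the delicate point of the argument.
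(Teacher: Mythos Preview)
Your split into tilting $\Gamma_n$-modules with and without the simple injective $I_{\Gamma_n}(n)$ is exactly the split the paper uses (in its notation, whether $P_{\Lambda_n}(n)[1]$ appears as a summand after transport by the BGP functor $F$), and your treatment of the first family agrees with the paper: these give precisely $\ca_t^4(\Lambda_n)$.

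The second family is where your proposal diverges from the paper and develops a real gap. The paper stays inside $K^b(\proj\Lambda_n)$ and observes that the tilting $\Gamma_n$-modules containing $I_{\Gamma_n}(n)$ are exactly the objects $\tau^{-1}(P(1)\oplus T')$ with $T'$ a basic tilting module in the wing of $P_{\Lambda_n}(2)$; since $\tau^{-1}$ preserves endomorphism algebras, this family yields exactly $\ca_t^5(\Lambda_n)$. Hence $\ca_t(\Gamma_n)=\ca_t^4(\Lambda_n)\cup\ca_t^5(\Lambda_n)$, and a single inclusion--exclusion using Lemmas~\ref{lem:tilted-algebra-without-Pn} and~\ref{lem:tilted-algebra-with-P2} finishes the computation. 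No detour through $\Gamma_n^{\mathrm{op}}$, no analysis of how an extra vertex attaches, and no ad hoc overlap bookkeeping: the intersection $\ca_t^4(\Lambda_n)\cap\ca_t^5(\Lambda_n)$ has cardinality $a_t^4(\Lambda_{n-1})$ directly from Lemma~\ref{lem:tilted-algebra-with-P2}.

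Your alternative route has a concrete problem. You assert that the endomorphism algebras newly obtained in the second family ``are precisely the non-hereditary members of $\ca_t^1(\Lambda_{n-1})$''. But elements of $\ca_t^1(\Lambda_{n-1})$ are algebras with $n-1$ simple modules, whereas the tilted algebras of type $\Gamma_n$ you are counting have $n$ simples; the two sets cannot literally coincide. At best one could hope for a bijection, but you do not construct one, and the number $t(\Lambda_{n-2})-1$ is asserted rather than derived: you yourself flag the overlap analysis as the ``delicate point'' and leave it undone. The numerical coincidence $a_t^1(\Lambda_{n-1})-1=t(\Lambda_{n-2})-1$ happens to be correct, but nothing in your sketch explains \emph{why} exactly this many algebras are new and why the remaining ones already lie in $\ca_t^4(\Lambda_n)$. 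The paper's $\tau^{-1}$ trick bypasses the whole difficulty by keeping both families inside $\ca_t(\Lambda_n)$ from the start.
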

\begin{proof}
Assume that $M$ is a basic tilting module over $\Gamma_n$. Then $M$ is a basic 2-term silting complex over $\Gamma_n$ which does not have $X,~P(n-1)[1],\ldots,P(1)[1]$ as direct summand. So $M$ belongs to one of the following two families:
\begin{itemize}
\item[(1)] $T$, a basic tilting module over $\Lambda_n$ which does not contain $P(n)$ as a direct summand;

\item[(2)] $\tau^{-1} (P(1)\oplus T')$, where $T'$ is a basic tilting module in the wing of $P(2)$;
\end{itemize}
Thus $\ca_t(\Gamma_n)$ is a proper subset of $\ca_t(\Lambda_n)$, precisely, $\ca_t(\Gamma_n)=\ca_t^4(\Lambda_n)\cup\ca_t^5(\Lambda_n)$. So 
\begin{align*}
a_t(\Gamma_n)&=a_t^4(\Lambda_n)+a_t^5(\Lambda_n)-|\ca_t^4(\Lambda_n)\cap\ca_t^5(\Lambda_n)|\\
&=a_{t}(\Lambda_n) - t(\Lambda_{n{-}1}) +t(\Lambda_{n{-}2}),
\end{align*} 
as desired. Here the last equality follows by Lemma~\ref{lem:tilted-algebra-with-P2}.
\end{proof}

\subsubsection{Family \rm{(II)}}
\label{sss:family-II}

Let $M$ be a basic 2-term silting complex over $\Gamma_n$ and assume that $X$ is a direct summand of $M$ and that $M^*$ is a tilting module over $\Lambda_n$. Write $M=M'\oplus X$ and $M^*=M'\oplus P(n)$. Then $\Hom(X,M')=0$.


Write $M^*=P(1)\oplus T'\oplus T''$ according to Proposition \ref{prop:tilting-modules-of-Lambdan} (2), where $T'$ is a tilting module in the wing of $P(i)$ for some $2\leq i\leq n$ containing $P(n)$ as a direct summand and $T''$ is a tilting module in the wing of $I(i-2)$. There are two cases:

Case 1: $T'$ contains $P(n{-}1)$ as a direct summand. Denote by $\ca_s^{\rm{II},1}(\Gamma_n)$ the set of isoclasses of $\End(M)$ for such $M$'s. By Lemma~\ref{lem:derived-Hom-and-hammock} $M^*$ has no direct summands in the upper ray of $S(n-2)$, and hence $\Hom(M',X)=0$. So
$$\End(M)=\End(M')\times \End(X).$$ 
Moreover, by Remark \ref{rek:tilting-module-over-A-turn-to-B}, $\End(M')$ is the endomorphism algebra of a tilting module over $\Lambda_{n-1}$ which contains $P(n-1)$ as a direct summand, and any such tilted algebra of type $\Lambda_{n-1}$ can occur here. So
\[
\ca_s^{\rm{II},1}(\Gamma_n)=\ca_t^1(\Lambda_{n-1})\times_s\ca_t(\Lambda_1) =\ca_t^1(\Lambda_{n-1})\times \ca_t(\Lambda_1).
\]

Case 2: $T'$ does not contain $P(n{-}1)$ as a direct summand. This implies $n\geq 3$. Moreover, $\Hom(X,M'[-1])=0$, and by Lemma~\ref{lem:derived-Hom-and-hammock}, 
\begin{align*}
\Hom(M',X[-1])=\Hom(M',S(n-1))=0.
\end{align*}
So $M$ is a tilting complex over $\Lambda_n$, and hence a 2-term tilting complex over $\Gamma_n$. It then follows from Corollary~\ref{cor:endomorphism-algebra-of-2-term-tilting-is-tilted} that $\End(M)$ is tilted of type $\mathbb{A}_n$. They form two groups: we denote by $\ca_s^{\rm{II},2}(\Gamma_n)$ the set of those tilted of type $\Lambda_n$ and by $\ca_s^{\rm{II},3}(\Gamma_n)$ the set of those not tilted of type $\Lambda_n$.
 We divide this case further into three subcases:

\emph{Subcase (2.1):} $T'=P(n)$ and $T''$ is a tilting module in the wing of $I(n-2)$. Since $\Hom(I(n-2),X)=k$ and $\Hom(P(1),X)=0$, it follows that $\End(M)$ is the path algebra of the quiver of the form:
  $$\begin{xy}
(0,0) *+{P(1)}="1",
(45,0) *+{X}="23",
(30,-14) *+{I(n-2)}="3",
(15,-28) *+{\circ}="6",
(45,-28) *+{\circ}="7",
(5,-42) *+{\circ}="12",
(25,-42) *+{\circ}="13",
(35,-42) *+{\circ}="14",
(55,-42) *+{\circ}="15",
(15,-50) *+{\vdots}="20",
(45,-50) *+{\vdots}="21",
\ar^{\alpha}"1";"3",
\ar^{\alpha}"3";"7",\ar^{\beta}"6";"3",\ar^{\beta}"12";"6",
\ar^{\alpha}"6";"13",\ar^{\beta}"14";"7",\ar^{\alpha}"7";"15",\ar@{-->}^{\delta}"3";"23",
\end{xy}$$
with all possible relations $\alpha\beta=0$ and $\delta\alpha=0$. 
This quiver with relation is clearly of the form $\mu(R)$ for some $R\in\cq^2(n)$ whose left border has exactly two vertices. By Lemma~\ref{lem:non-hereditary-mutated-QR-1}, it is of the form $E(R_1)$ for some $R_1\in\cq(n-2)$, and any $R_1\in \cq(n-2)$ can occur here.

\emph{Subcase (2.2):} $T'$ is a tilting module in the wing of $P(2)$ and $T''=0$. Let $2\leq j\leq n-2$ be maximal such that $M^*$ contains $P(j)$ as a direct summand. Then by Lemma \ref{lem:maximal-P(i)-to-relation}, we know that $M^*$ contains $\tau^{-2}P(j+2)$ but not $\tau^{-1}P(j+1)$ as a direct summand. Moreover, $M$ has no direct summands in the upper ray of $\tau^{-2}P(j+2)$ except $X$ and $\tau^{-2}P(j+2)$, by Lemma~\ref{lem:derived-Hom-and-hammock}. This shows that $\End(M)$ is the path algebra of the quiver of the form:
$$\begin{xy}
(20,30) *+{P(1)}="1",
(10,20) *+{P(2)}="3",
(0,10) *+{\circ}="7",
(20,10) *+{\circ}="8",
(10,10) *+{\vdots}="9",
(-10,0) *+{\circ}="4",
(0,-8) *{\circ}="6",
(-25,-15) *+{P(j)}="2",
(-10,-30) *+{\tau^{-2}P(j+2)}="5",
(-20,-45) *+{\circ}="10",
(0,-45) *+{\circ}="11",
(5,-17) *+{X}="16",
(-10,-45) *+{\vdots}="19",
(-10,-11) *+{\vdots}="20", 
\ar^{\beta}"3";"1",\ar^\beta"2";"4",\ar@{.}"4";"7",\ar^\alpha"4";"6",\ar^{\alpha}"2";"5",\ar^{\beta}"10";"5",\ar^{\alpha}"5";"11", \ar^\beta"7";"3", \ar^\alpha"3";"8",
\ar@{-->}^{\delta}"5";"16",
\end{xy}$$ 
with all possible relations $\alpha\beta=0$ and $\delta\alpha=0$. This quiver with relation is clearly of the form $\mu(R)$ for some $R\in\cq^2(n)$ whose root is a sink. By Lemma~\ref{lem:non-hereditary-mutated-QR-2}, it is of the form $E(R_1,R_2)$ for some $R_1\in\cq^1(j-1)$ and $R_2\in\cq(n-j-1)$, and any $R_1\in\cq^1(j-1)$ and $R_2\in\cq(n-j-1)$ can occur here.

\emph{Subcase (2.3):}  $T'$ is a tilting module of the wing in $P(i)$, where $3\leq i\leq n-2$, and $T''$ is a tilting module in the wing of $I(i-2)$. Let $i\leq j\leq n-2$ be maximal such that $M^*$ contains $P(j)$ as a direct summand. Similar to Subcase (2.2), $\End(M)$ is the path algebra of the quiver of the form
$$\begin{xy}
(25,30) *+{P(1)}="1",
(10,20) *+{P(i)}="3",
(0,10) *+{\circ}="7",
(20,10) *+{\circ}="8",
(10,10) *+{\vdots}="9",
(-10,0) *+{\circ}="4",
(0,-8) *{\circ}="6",
(-25,-15) *+{P(j)}="2",
(-10,-30) *+{\tau^{-2}P(j+2)}="5",
(-20,-45) *+{\circ}="10",
(0,-45) *+{\circ}="11",
(5,-17) *+{X}="16",
(-10,-45) *+{\vdots}="19",
(-10,-11) *+{\vdots}="20", 
(40,20) *+{I(i-2)}="21",
(30,10) *+{\circ}="22",
(50,10) *+{\circ}="23",
(40,10) *+{\vdots}="24",
\ar^{\beta}"3";"1",\ar^\beta"2";"4",\ar@{.}"4";"7",\ar^\alpha"4";"6",\ar^{\alpha}"2";"5",\ar^{\beta}"10";"5",\ar^{\alpha}"5";"11", \ar^\beta"7";"3", \ar^\alpha"3";"8",
\ar@{-->}^{\delta}"5";"16", 
\ar^\alpha"1";"21", \ar^(0.4)\beta"22";"21", \ar^\alpha"21";"23", 
\end{xy}$$ 
with all possible relations $\alpha\beta=0$ and $\delta\alpha=0$. 
This quiver with relation is clearly of the form $\mu(R)$ for some $R\in\cq^2(n)$ whose root is not a sink and whose left boarder has at least three vertices. By Lemma~\ref{lem:non-hereditary-mutated-QR-3}, it is of the form $E(R_1,R_2,R_3)$ for some $R_1\in\cq^1(j-i+1)$, $R_2\in\cq(n-j-1)$ and $R_3\in\cq(i-2)$, and any $R_1\in\cq^1(j-i+1)$, $R_2\in\cq(n-j-1)$ and $R_3\in\cq(i-2)$ can occur here.

\medskip
To summarise, we have
\begin{align}
\label{eqn:ca-II}
\ca_s^{\rm{II}}(\Gamma_n)=\ca_s^{\rm{II},1}(\Gamma_n)\sqcup \ca_s^{\rm{II},2}(\Gamma_n) \sqcup \ca_s^{\rm{II},3}(\Gamma_n).
\end{align}
Note that $\ca_s^{{\rm II},3}(\Gamma_n)$ is empy for $n\leq 4$, $\ca_s^{\rm{II},2}(\Gamma_2)$ is empty and $\ca_s^{\rm{II},2}(\Gamma_3)$ consists of one element, the path algebra of
\[
\begin{xy}
 (-10,10)*+{\circ}="1",
(0,20)*+{\circ}="2",
(10,10)*+{\circ}="3",
(-4,16)*+{ }="4",
(4,16)*+{ }="5",
\ar"1";"2", \ar"2";"3", \ar@/^0.6pc/@{.}"5";"4",
\end{xy}
\]

\subsubsection{Family {\rm (III)}}
\label{sss:family-III}

Let $M$ be a basic 2-term silting complex over $\Gamma_n$ and assume that $X$ is a direct summand of $M$ and that  $M^*$ is a not tilting module over $\Lambda_n$.


According to Proposition \ref{prop:2-term-silting-complexes-of-Lambda_n} we can write $M^*=M_{1}^*\oplus M_{2}$, such that $M=M_1\oplus M_2$, $M_{1}^*$ is a tilting module in the wing of $P(i)$ with $2\leq i\leq n$ which has $P(n)$ as a direct summand and $M_{2}$ is a tilting module in the wing of $P(i-1)[1]$.
We know that $\Hom(M_{1}^*,M_{2})=0=\Hom(M_{2},M_{1}^*)$. Thus, there are the following two cases:

Case 1: $i=n$. Let $\ca_s^{\rm{III},1}(\Gamma_n)$ denote the set of isoclasses of $\End(M)$ for such $M$'s. In this case, $\tau^2(M)$ is a tilting module over $\Lambda_n$ which contains $P(2)$ as a direct summand. Therefore $\ca_s^{\rm{III},1}(\Gamma_n)=\ca_t^5(\Lambda_n)$.

Case 2: $i\neq n$. Let $\ca_s^{\rm{III},2}(\Gamma_n)$ denote the set of isoclasses of $\End(M)$ for such $M$'s. In this case, $\Hom(M_{2},X)=0$ and $\Hom(X,M_2)=0$, and hence $\Hom(M_{1},M_{2})=0=\Hom(M_{2},M_{1})$. Then
\[
\End(M)=\End(M_1)\times \End(M_{2}).
\]
Moreover, there are triangle equivalences $\thick(M_1)=\thick(M_1^*)\simeq K^b(\proj \Lambda_{n-i+1})\simeq K^b(\proj \Gamma_{n-i+1})$, $M_1$ can be considered as a 2-term silting complex over $\Gamma_{n-i+1}$ which has $X$ as direct summand.  Then $\ca_s^{\rm{III},2}(\Gamma_n)=\bigcup_{i=2}^{n-1}\ca_s^{\rm II}(\Gamma_{n-i+1})\times_s \ca_t(\Lambda_{i-1})$.

To summarise, we have
\begin{align}
\label{eqn:ca-III}
\ca_s^{\rm{III}}(\Gamma_n)&=\ca_s^{\rm{III},1}(\Gamma_n)\sqcup \ca_s^{\rm{III},2}(\Gamma_n),\nonumber\\
\ca_s^{\rm{III},1}(\Gamma_n)&=\ca_t^5(\Lambda_n),\\
\ca_s^{\rm{III},2}(\Gamma_n)&=\bigcup_{i=2}^{n-1}\ca_s^{\rm II}(\Gamma_{n-i+1})\times_s \ca_t(\Lambda_{i-1}))\nonumber\\
&=\bigcup_{i=2}^{n-1}(\ca_s^{\rm{II},1}(\Gamma_{n-i+1})\times_s\ca_t(\Lambda_{i-1}))
\sqcup \bigcup_{i=2}^{n-1} (\ca_s^{\rm{II},2}(\Gamma_{n-i+1})\times_s\ca_t(\Lambda_{i-1}))\nonumber\\
&\hspace{10pt}\sqcup \bigcup_{i=2}^{n-1} (\ca_s^{\rm{II},3}(\Gamma_{n-i+1})\times_s\ca_t(\Lambda_{i-1})).\nonumber
\end{align}

\subsubsection{The proof of Theorem \ref{thm:silted-algebras-of-Gamman}}
\label{sss:proof-of-main-theorem} 
We combine results in Subsections~\ref{sss:family-I}, \ref{sss:family-II} and \ref{sss:family-III} to prove Theorem~\ref{thm:silted-algebras-of-Gamman}.
Put 
\begin{align*}
\cb_1&=\ca_s^{\rm{I},1}(\Gamma_n)\cup \ca_s^{\rm{II},2}(\Gamma_n)\cup \ca_s^{\rm{III},1}(\Gamma_n),\\
\cb_2&=\ca_s^{\rm{II},3}(\Gamma_n),\\
\cb_3&=\ca_s^{\rm{I},2}(\Gamma_n)\cup\ca_s^{\rm{II},1}(\Gamma_n)\cup\bigcup_{i=2}^{n-1}(\ca_s^{\rm{II},2}(\Gamma_{n-i+1})\times_s \ca_t(\Lambda_{i-1})),\\
\cb_4&=\bigcup_{i=2}^{n-1}(\ca_s^{\rm{II},3}(\Gamma_{n-i+1})\times_s \ca_t(\Lambda_{i-1}))\\
\cb_5&=\bigcup_{i=2}^{n-1}(\ca_s^{\rm{II},1}(\Gamma_{n-i+1})\times_s\ca_t(\Lambda_{i-1}))
\end{align*}
Then $\ca_s(\Gamma_n)=\ca_s^{\rm I}(\Gamma_n)\cup \ca_s^{\rm II}(\Gamma_n)\cup\ca_s^{\rm III}(\Gamma_n)=\cb_1\sqcup\cb_2\sqcup\cb_3\sqcup\cb_4\sqcup\cb_5$, due to the equalities \eqref{eqn:ca-I}, \eqref{eqn:ca-II} and \eqref{eqn:ca-III}.
\begin{itemize}
  \item[(1)] $\cb_1=\ca_t(\Lambda_n)$, because $\ca_s^{{\rm I},1}(\Gamma_n)=\ca_t(\Lambda_n)$ and both $\ca_s^{{\rm II},2}(\Gamma_n)$ and $\ca_s^{{\rm III},1}(\Gamma_n)$ are subsets of $\ca_t(\Lambda_n)$.
  \item[(2)] For $n\leq 4$, $\cb_2=\emptyset$, and for $n\geq 5$, $\cb_2$ consists of isoclasses of path algebras of elements in $\cm(n)$, by the analysis in Section~\ref{sss:family-II} Case 2.
  \item[(3)] For $\cb_3$, we first show that
  $\ca_t^6(\Lambda_n)=\ca_t^4(\Lambda_n)\cup \ca_s^{{\rm II},2}(\Gamma_n)$. 
For $n=2$, $\ca_t^4(\Lambda_2)=\ca_t(\Lambda_2)=\{\Lambda_2\}$ and $\ca_s^{\rm{II},2}(\Gamma_2)=\emptyset$, and hence $\ca_t^6(\Lambda_2)=\ca_t(\Lambda_2)=\ca_t^4(\Lambda_2)\cup\ca_s^{\rm{II},2}(\Gamma_2)$. 
For $n\geq 3$, look at quivers with relation obtained in the subcases (2.1) and (2.2) in Section~\ref{sss:family-II}. 
By Lemma~\ref{lem:tilting-modules-and-tilted-algebras-with-P(n)} (a), the only element of $\ca_s^{\rm{II},2}(\Gamma_n)$ which is the endomorphism algebra of a tilting module over $\Lambda_n$ with $P(n)$ as a direct summand is the path algebra of
\[
\begin{xy}
(0,0)*+{\cdots}="1",
(10,10)*+{\circ}="2",
(-10,-10)*+{\circ}="4",
(-20,-20)*+{\circ}="5",
(0,-20)*+{\circ}="6",
(-14,-14)*+{ }="7",
(-6,-14)*+{ }="8",
\ar"5";"4",\ar"4";"1",\ar"1";"2", \ar"4";"6",\ar@/^0.6pc/@{.}"8";"7",
\end{xy}
\]
The tilting module is exactly $P(1)\oplus\ldots\oplus P(n-2)\oplus P(n)\oplus \tau^{-2}P(n)$. This shows that $\ca_t^6(\Lambda_n)=\ca_t^4(\Lambda_n)\cup \ca_s^{{\rm II},2}(\Gamma_n)$. Therefore
  \begin{align*}
\cb_3&=\ca_s^{\rm{I},2}(\Gamma_n)\cup\ca_s^{\rm{II},1}(\Gamma_n)\cup\bigcup_{i=2}^{n-1}(\ca_s^{\rm{II},2}(\Gamma_{n-i+1})\times_s \ca_t(\Lambda_{i-1}))\\
&=\bigcup_{m=2}^{n-1}(\ca_t^4(\Lambda_m)\times_s \ca_t(\Lambda_{n-m})) \cup(\ca_t^1(\Lambda_{n-1})\times_s\ca_t(\Lambda_1))\cup \bigcup_{m=2}^{n-1}(\ca_s^{\rm{II},2}(\Gamma_{m})\times_s \ca_t(\Lambda_{n-m}))\\
&= ((\ca_t^4(\Lambda_{n-1})\cup\ca_t^1(\Lambda_{n-1})\cup\ca_s^{\rm{II},2}(\Gamma_{n-1}))\times_s \ca_t(\Lambda_{1}))\cup\bigcup_{m=2}^{n-2}(\ca_t^6(\Lambda_{m})\times_s \ca_t(\Lambda_{n-m}))\\
&=((\ca_t(\Lambda_{n-1})\times_s \ca_t(\Lambda_{1}))\cup\bigcup_{m=2}^{n-2}(\ca_t^6(\Lambda_{m})\times_s \ca_t(\Lambda_{n-m}))\\
&=(\ca_t(\Lambda_{1})\times_s \ca_t(\Lambda_{n-1}))\cup (\ca_t^6(\Lambda_{n-1})\times_s \ca_t(\Lambda_{1}))\cup\bigcup_{m=2}^{n-2}(\ca_t^6(\Lambda_{m})\times_s \ca_t(\Lambda_{n-m}))\\
&=\bigcup_{m=1}^{n-1}(\ca_t^6(\Lambda_{m})\times_s \ca_t(\Lambda_{n-m})).
\end{align*}
  \item[-] For $\cb_4$,
  \begin{align*}
  \cb_4&=\bigcup_{i=2}^{n-1}(\ca_s^{\rm{II},3}(\Gamma_{n-i+1})\times_s \ca_t(\Lambda_{i-1}))=\bigcup_{i=2}^{n-1}(\cb_2(\Gamma_{n-i+1})\times_s \ca_t(\Lambda_{i-1}))\\
&=\bigsqcup_{i=2}^{n-1}(\cb_2(\Gamma_{n-i+1})\times \ca_t(\Lambda_{i-1}))=\bigsqcup_{m=5}^{n-1}(\cb_2(\Gamma_{m})\times \ca_t(\Lambda_{n-m})).
\end{align*}
The last equality holds because $\cb_2(\Gamma_m)=\emptyset$ for $m\leq 4$.
  \item[-] For $\cb_5$,
  \begin{align*}
  \cb_5&=\bigcup_{i=2}^{n-1}(\ca_t^{2}(\Lambda_{n-i})\times_s\ca_t(\Lambda_1)\times_s\ca_t(\Lambda_{i-1}))\\
  &=\bigcup_{m=1}^{n-2}(\ca_t^1(\Lambda_{m})\times_s \ca_t(\Lambda_{n-m-1}))\times \ca_t(\Lambda_1).
  \end{align*}
\end{itemize}
The proof of Theorem~\ref{thm:silted-algebras-of-Gamman} is complete.

\subsubsection{2-term tilting complexes over $\Gamma_n$}
As a consequence of the analysis in Subsections~\ref{sss:family-I}, \ref{sss:family-II} and \ref{sss:family-III}, we obtain a classification of basic 2-term tilting complexes over $\Gamma_n$.
Note that basic tilting modules over $\Gamma_n$ has been classified in the proof of Corollary~\ref{cor:number-of-tilted-algebra-of-tyep-Gamman}.

\begin{corollary}
A complex of $K^b(\proj \Gamma_n)$ is a basic 2-term tilting complex if and only if it is of one of the following forms:
\begin{itemize}
\item[-] $T$, a basic tilting module over $\Lambda_n$ which does not contain $P(n)$ as a direct summand,
\item[-] $\tau^{-1}T$, where $T$ is a basic tilting module over $\Lambda_n$,
\item[-] $T^*$, where $T$ is a basic tilting module over $\Lambda_n$ which contains $P(n)$ but not $P(n-1)$ as a direct summand,
\item[-] $\tau^{-2}(T)$, where $T$ is a basic tilting module over $\Lambda_n$ which contains $P(2)$ as a direct summand.
\end{itemize}
\end{corollary}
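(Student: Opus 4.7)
The plan is to combine the classification of 2-term silting complexes over $\Gamma_n$ obtained in Section~\ref{sec:silted-algebras-of-Gamman} (the three families (I), (II), (III) with their subcases) with the tilting criterion: a 2-term silting complex $M$ is a tilting complex iff $\Hom_{K^b(\proj \Gamma_n)}(M, M[-1])=0$. Via the BGP reflection functor I will work inside $K^b(\proj \Lambda_n)$, and because $\Lambda_n$ is hereditary of Dynkin type, $\tau$ extends to an autoequivalence of $D^b(\mod \Lambda_n)$, so $\Hom$ is $\tau$-invariant. The key tool for the nonvanishing side will be Lemma~\ref{lem:derived-Hom-and-hammock}.

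First I will match the four shapes in the statement to the classification: case (a) $=$ Family (I1), case (b) $=$ Family (I2), case (c) $=$ Family (II) Case 2 (where $M^{*}=T$ always contains $P(n)$ by construction of Family (II), and Case 2 is precisely ``$T$ does not contain $P(n-1)$''), and case (d) $=$ Family (III) Case 1 (where the analysis in Section~\ref{sss:family-III} identifies $\tau^{2}M=T$ as a tilting module containing $P(2)$). For each of these, the tilting condition is immediate: for (I1), $T$ is a module so $\Hom(T,T[-1])=0$; for (I2) and (III) Case 1, $\tau$-invariance gives $\Hom(\tau^{-k}T,\tau^{-k}T[-1])\cong \Hom(T,T[-1])=0$ for $k=1,2$; and for (II) Case 2, Section~\ref{sss:family-II} already establishes that $M$ is a tilting complex over $\Lambda_n$.

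For the remaining silting complexes (Family (I3), Family (II) Case 1, Family (III) Case 2), the plan is to exhibit a nonzero element of $\Hom(M,M[-1])$. In Family (I3) I write $M=M'\oplus N[1]$ with $M'$ tilting in the wing of $P(i)$ (so containing $P(i)$) and $N$ tilting in the wing of $P(i-1)$ (so containing $P(i-1)$); then $\Hom(M,M[-1])\supseteq \Hom(M',N)\supseteq \Hom(P(i),P(i-1))\neq 0$, by reading off the hammock of $P(i)$. In Family (II) Case 1, $M$ contains $X$ together with a summand containing $P(n-1)$, and since $X[-1]\cong S(n-1)$ and $\Hom(P(n-1),S(n-1))\neq 0$, again $\Hom(M,M[-1])\neq 0$. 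Family (III) Case 2 is handled analogously to (I3), using that the ``module part'' of $M$ contains $P(i)$ with $i\leq n-1$ and $M_{2}=N_{2}[1]$ with $N_{2}$ containing $P(i-1)$.

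The main obstacle I expect is the precise dictionary between the defining conditions of Families (II) Case 2 and (III) Case 1 on the one hand, and the conditions ``contains $P(n)$ but not $P(n-1)$'' and ``contains $P(2)$'' on the other hand; this requires careful bookkeeping with Proposition~\ref{prop:tilting-modules-of-Lambdan} and the wing formalism, in particular tracking how the summands of $\tau^{-2}T$ land in the region of Figure~\ref{fig:AR-quiver-Gamman}. Once that matching is in place, the $\Hom$ vanishing and nonvanishing computations are routine applications of Lemma~\ref{lem:derived-Hom-and-hammock} and the $\tau$-invariance of $\Hom$.
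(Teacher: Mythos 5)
Your strategy is the same as the paper's (implicit) proof: read the corollary off from the family-by-family classification of 2-term silting complexes over $\Gamma_n$ in Sections~\ref{sss:family-I}--\ref{sss:family-III}, and decide tiltingness by checking $\Hom(M,M[-1])$, the tilting cases (I1), (I2), (II) Case 2 and (III) Case 1 matching the four listed forms; your vanishing arguments ($\tau$-invariance for (b) and (d), the computation already done in Section~\ref{sss:family-II} Case 2 for (c)) and your matching of the defining conditions are correct. One inaccuracy should be repaired: in Family (I3) and Family (III) Case 2 you write $M''=N[1]$ (resp.\ $M_2=N_2[1]$) with $N$ a tilting module in the wing of $P(i-1)$. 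This is false in general, because the wing of $P(i-1)[1]$ inside $K^{[-1,0]}(\proj\Lambda_n)$ is not the shift of the module wing of $P(i-1)$: it consists of the shifted projectives $P(j)[1]$ with $j\leq i-1$ together with the module wing of $I(i-2)$, so a tilting object of this wing typically mixes honest modules with shifted projectives (this is visible in Subcase (2.1) of Section~\ref{sss:family-II}, where the module summands lie in the wing of $I(n-2)$). The nonvanishing argument is easily salvaged without this identification: $P(i-1)[1]$ is the projective-injective vertex of the wing of $P(i-1)[1]$, hence a direct summand of $M''$ (resp.\ $M_2$), while $P(i)$ is the projective-injective vertex of the wing of $P(i)$, hence a direct summand of $M'$ (resp.\ of the module part of $M_1$, using $i\leq n-1$); therefore $\Hom(M,M[-1])\supseteq\Hom\bigl(P(i),P(i-1)[1][-1]\bigr)=\Hom(P(i),P(i-1))\neq 0$, exactly the map you intended, and similarly your use of $\Hom(P(n-1),S(n-1))\neq 0$ disposes of Family (II) Case 1. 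With this correction, and the dictionary you flag (which amounts to the observations that in Family (II) the complex $M^*$ contains $P(n)$ by construction and Case 2 is precisely ``no $P(n-1)$'', and that in Family (III) Case 1 one has $\tau^2X=P(1)$ and $\tau^2$ carries the wing of $P(n-1)[1]$ onto the wing of $P(2)$, so $\tau^2M$ is a tilting module containing $P(2)$ and conversely), the proof is complete and coincides with the paper's.
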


\subsection{The cardinalities of $\cb_i$, $1\leq i\leq 5$}
\label{ss:cardinatities}
In this subsection we prove Theorem~\ref{thm:number-of-silted-algebras-of-Gamman}. The equalities for $|\cb_1|$ and $|\cb_4|$ are clear. 
We only need to calculate the cardinalities of $\cb_2$, $\cb_3$ and $\cb_5$.

\subsubsection{The cardinality of $\cb_2$}
\label{sss:the-cardinality-of-B2}

Recall that the map $\mu\colon\cq^3(n)\to \cm(n)$ is bijective. It follows  from Lemma~\ref{lem:number-of-tilting-modules-without-P(n-1)-as-summand} that 
\begin{align}
\label{card:M}
|\cm(n)|=t(\Lambda_{n-1})-t(\Lambda_{n-2})-2^{n-2}+1.
\end{align}
Recall that $\cm(3)$ and $\cm(4)$ are empty and $\cm(5)$ has two elements, see~\eqref{QR:M(5)}. It follows that $|\cb_2|=0,0,2$ for $n=3,4,5$. In the sequel of this subsubsection we assume $n\geq 6$. The map $\cm(n)\to\cb_2$ of taking the path algebra is by definition surjective, but it is not injective, namely, different rooted quivers with relation in $\cm(n)$ can be isomorphic as quivers with relation. A typical example is
\[
\begin{xy}
 (-10,-10)*+{\circ}="11",
(0,0)*+{\circ}="22",
(-10,10) *+{\bullet}="21",
(10,10) *+{\circ}="23",
(-4,4)*+{ }="24",
(4,4)*+{ }="25",
(-20,-20) *+{\circ}="1",
(0,-20) *+{\circ}="3",
(-14,-14)*+{ }="4",
(-6,-14)*+{ }="5",
\ar"21";"22", \ar"22";"23", \ar@/_0.6pc/@{.}"25";"24",
\ar"11";"22", 
\ar"1";"11", \ar"11";"3", \ar@/^0.6pc/@{.}"5";"4",
\end{xy}
\hspace{15pt} \cong \hspace{15pt}
\begin{xy}
 (10,-10)*+{\circ}="11",
(0,0)*+{\circ}="22",
(-10,10) *+{\bullet}="21",
(10,10) *+{\circ}="23",
(-4,4)*+{ }="24",
(4,4)*+{ }="25",
(0,-20) *+{\circ}="1",
(20,-20) *+{\circ}="3",
(6,-14)*+{ }="4",
(14,-14)*+{ }="5",
\ar"21";"22", \ar"22";"23", \ar@/_0.6pc/@{.}"25";"24",
\ar"22";"11", 
\ar"1";"11", \ar"11";"3", \ar@/^0.6pc/@{.}"5";"4",
\end{xy}
\]
In order to compute the cardinality of $\cb_2$ we need to figure out which elements of $\cm(n)$ are isomorphic as quivers with relation. 

\smallskip
For $1\leq m\leq n-2$, consider the following rooted quiver with relation with $m+2$ vertices
\[
R(m)=~~~\begin{xy}
(0,-10) *+{\circ}="21",
(20,-10) *+{\circ}="23",
(10,-20) *+{*}="22",
(6,-16)*+{ }="24",
(14,-16)*+{ }="25",
(10,0) *+{\circ}="26",
(20,10) *+{\circ}="27",
(30,20) *+{\bullet}="28",
\ar"21";"22", \ar"22";"23", \ar@/_0.6pc/@{.}"25";"24",
\ar@{.}"26";"27", \ar"21";"26", \ar"27";"28", 
\end{xy}
\]
For $1\leq m,l\leq n-2$ with $m+l\leq n-3$ and for $R\in\cq_h(n-2-m-l)$, define $G(m,l,R)$ to be the rooted quiver with relation obtained from $R(m),R(l)$ and $R$ by identifying the root of $R$ with the vertex $*$ of $R(m)$ and identifying the leaf of $R$ with the vertex $*$ of $R(l)$ (we turn $R(l)$ upside down). The root of $G(m,l,R)$ is the root of $R(m)$. For example, \begin{align*}
G(1,2,\begin{xy}
 (-7.5,-3)*+{\circ}="1",
(0,4.5)*+{\bullet}="2",
\ar"1";"2", 
\end{xy})=
\begin{xy}
(0,16) *+{\bullet}="1",
(20,16) *+{\circ}="3",
(10,6) *+{\circ}="2",
(6,10)*+{ }="24",
(14,10)*+{ }="25",
(0,-16) *+{\circ}="4",
(10,-6) *+{\circ}="5",
(20,-16) *+{\circ}="6",
(10,-26) *+{\circ}="7",
(6,-10)*+{ }="34",
(14,-10)*+{ }="35",
\ar"1";"2", \ar"2";"3", \ar"5";"2", \ar@/_0.6pc/@{.}"25";"24",
\ar"4";"5", \ar"5";"6", \ar@/^0.6pc/@{.}"35";"34",
\ar"4";"7",
\end{xy}
\end{align*}
Let 
\[
\mathcal{R}(m,l)=
\begin{cases} \{G(m,l,R)\mid R\in \cq_h(n-2-m-l)\} & \text{if } m\neq l,\\
\{G(m,m,R)\mid R\in\cq_h(n-2-2m),\bar{R}\neq R\} & \text{if }m=l.
\end{cases}
\]
Note that $\mathcal{R}(1,l)\subseteq \cm_1(n)$ and $\mathcal{R}(m,l)\subseteq \cm_2(n)$ for $m\geq 2$, and that $\mathcal{R}(m,m)$ is defined for $m\leq \frac{n-3}{2}$. Moreover, by Lemma~\ref{lem:hereditary-QR}, we have 
\begin{align}
\label{card:R}
|\mathcal{R}(m,m)|&=\begin{cases} 2^{n-3-2m}-2^{\frac{n-3}{2}-m} & \text{if $n$ is odd},\\
2^{n-3-2m} & \text{if $n$ is even},
\end{cases}\\
|\mathcal{R}(m,l)|&=2^{n-3-m-l} \hspace{10pt} \text{if $m\neq l$}.\nonumber
\end{align}

We claim that for $R\in \cm(n)$, the following conditions are equivalent:
\begin{itemize}
\item[(i)]
there exists $R'\in \cm(n)$ such that $R'\neq R$ but $R'$ is isomorphic to $R$ as a quiver with relation,
\item[(ii)] $R\in \mathcal{R}(m,l)$ for some $m$ and $l$, or $R$ is one of:
\begin{align}
\label{QR:exceptional-case}
\begin{xy}
(0,15) *+{\circ}="21",
(18,13) *+{\circ}="23",
(10,5) *+{\circ}="22",
(6,9)*+{ }="24",
(14,9)*+{ }="25",
(20,-5) *+{\circ}="26",
(30,-15) *+{\circ}="27",
(40,-25) *+{\circ}="28",
(18,17) *+{\circ}="33",
(10,25) *+{\bullet}="32",
(6,21)*+{ }="34",
(14,21)*+{ }="35",
\ar"21";"22", \ar"22";"23", \ar@/_0.6pc/@{.}"25";"24",
\ar"21";"32", \ar"32";"33", \ar@/^0.6pc/@{.}"35";"34",
\ar"22";"26", \ar@{.}"26";"27", \ar"27";"28",
\end{xy},
\hspace{30pt}
\begin{xy}
(0,-15) *+{\circ}="21",
(18,-13) *+{\circ}="23",
(10,-5) *+{\circ}="22",
(6,-9)*+{ }="24",
(14,-9)*+{ }="25",
(20,5) *+{\circ}="26",
(30,15) *+{\circ}="27",
(40,25) *+{\bullet}="28",
(18,-17) *+{\circ}="33",
(10,-25) *+{\circ}="32",
(6,-21)*+{ }="34",
(14,-21)*+{ }="35",
\ar"21";"22", \ar"22";"23", \ar@/^0.6pc/@{.}"25";"24",
\ar"21";"32", \ar"32";"33", \ar@/_0.6pc/@{.}"35";"34",
\ar"22";"26", \ar@{.}"26";"27", \ar"27";"28",
\end{xy}
\end{align}
\end{itemize} 
Moreover, the $R'$ in (i)  is unique: if $R=G(m,l,R_1)\in\mathcal{R}(m,l)$, then $R'=G(l,m,\bar{R}_1)$; if $R$ is one of the two rooted quivers with relation in \eqref{QR:exceptional-case}, then $R'$ is the other one. 
As a consequence, we have
\begin{align*}
|\cb_2|&=|\cm(n)|-\sum_{m=1}^{[\frac{n-3}{2}]} \frac{1}{2} |\mathcal{R}(m,m)|-\sum_{1\leq m<l\leq n-3-m}|\mathcal{R}(m,l)|-1\\
&=|\cm(n)|-\frac{1}{2}\sum_{i=2}^{n-3}\sum_{m=1}^{i-1}|\mathcal{R}(m,i-m)|-1.
\end{align*}
When $n$ is even, we have
\begin{align*}
\sum_{i=2}^{n-3}\sum_{m=1}^{i-1}|\mathcal{R}(m,i-m)|&=\sum_{i=2}^{n-3}\sum_{m=1}^{i-1}2^{n-3-i}
=\sum_{i=2}^{n-3}(i-1)2^{n-3-i}\\
&=\sum_{i=2}^{n-3}2^{n-3-i}+\sum_{i=3}^{n-3}2^{n-3-i}+\ldots+\sum_{i=n-2}^{n-3}2^{n-3-i}+1\\
&=(2^{n-4}-1)+(2^{n-5}-1)+\ldots+(2^2-1)+(2-1)\\
&=2^{n-3}-n+2.
\end{align*}
So
\begin{align*}
|\cb_2|&=t(\Lambda_{n-1})-t(\Lambda_{n-2})-2^{n-2}+1-\frac{1}{2}(2^{n-3}-n+2)-1\\
&=t(\Lambda_{n-1})-t(\Lambda_{n-2})-2^{n-2}-2^{n-4}+\frac{n-2}{2}.
\end{align*}
When $n$ is odd, we have
\begin{align*}
\sum_{i=2}^{n-3}\sum_{m=1}^{i-1}|\mathcal{R}(m,i-m)|&=\sum_{i=2}^{n-3}\sum_{m=1}^{i-1}2^{n-3-i}-\sum_{m=1}^{\frac{n-3}{2}}2^{\frac{n-3}{2}-m}\\
&=(2^{n-3}-n+2)-(2^{\frac{n-3}{2}}-1)\\
&=2^{n-3}-2^{\frac{n-3}{2}}-(n-3).
\end{align*}
So
\begin{align*}
|\cb_2|&=t(\Lambda_{n-1})-t(\Lambda_{n-2})-2^{n-2}+1-\frac{1}{2}(2^{n-3}-2^{\frac{n-3}{2}}-(n-3))-1\\
&=t(\Lambda_{n-1})-t(\Lambda_{n-2})-2^{n-2}-2^{n-4}+2^{\frac{n-3}{2}-1}+\frac{n-3}{2}.
\end{align*}

\smallskip
Next we prove the claim. The implication (ii)$\Rightarrow$(i) is clear with $R'$ as given in the `Moreover' part. Now let $R'\in\cm(n)$ be such that $R'\neq R$ and there are mutual inverse isomorphisms $\varphi\colon R\to R'$ and $\psi\colon R'\to R$ of quivers with relation.

Case 1: $R\in\cm_3(n)$. By Lemma~\ref{lem:non-hereditary-mutated-QR-3}, we assume that $R=E(R_1,R_2,R_3)\in\cm_3(n)$ for some $R_1\in\cq^1(j-i+1)$, $R_2\in\cq(n-j-1)$ and $R_3\in\cq(i-2)$, $3\leq i\leq j\leq n-2$. Then $R$ has a unique vertex (the vertex $2$) which lies on a relation and has exactly two neighbours. 

\emph{Subcase (1.1):} $R'\in\cm_1(n)$. By Lemma~\ref{lem:non-hereditary-mutated-QR-1}, we assume that $R'=E(R'_1)\in\cm_1(n)$ for some $R'_1\in\cq_{nh}(n-2)$. By Lemma~\ref{lem:vertices-lying-on-a-relation} applied to $R'_1$, we see that $R'$ can not have a vertex which has exactly two neighbours and which lies on a relation. So this subcase does not occur. 

\emph{Subcase (1.2):} $R'\in\cm_2(n)$. By Lemma~\ref{lem:non-hereditary-mutated-QR-2}, we assume that $R'=E(R'_1,R_2)\in\cm_2(n)$ for some $R'_1\in\cq^1(j'-1)$ and $R'_2\in\cq(n-j'-1)$ such that at least one of $R'_1$ and $R'_2$ is non-hereditary, $2\leq j'\leq n-2$. By Lemma~\ref{lem:vertices-lying-on-a-relation} applied to $R'_1$ and $R'_2$, we see that $R'$ has a vertex which has exactly two neighbours and which lies on a relation only if $R'_2$ consists of the root only, and thus $j'=n-3$ and $R'_1$ has to be non-hereditary. In this case, this vertex is the vertex $4$ in $R'$. The isomorphism $\varphi\colon R\to R'$ necessarily takes the vertex $2$ of $R$ to the vertex $4$ of $R'$, and hence takes the vertex $1$ of $R$ to the vertex $3$ of $R'$ and takes the vertex $3$ of $R$ to the vertex $5$ of $R'$, which is a sink. It follows that $R_3$ consists of the root only (so $i=3$).  Moreover, in $R$ the vertex $4$ is the only vertex which is a source and from which there are two arrows going out, and in $R'$ the vertex $3$ is the only such vertex. Therefore the isomorphism $\varphi\colon R\to R'$ has to take the vertex $4$ of $R$ to the vertex $3$ of $R'$. This implies that in $R$ the vertices $1$ and $4$ are the same and $R_1$ consists of the root only and that $R'$ is obtained from some $R''_1\in \cq^1(n-5)$ and
\[
\begin{xy}
(0,0) *+{3}="21",
(18,-2) *+{5}="23",
(10,-10) *+{4}="22",
(6,-6)*+{ }="24",
(14,-6)*+{ }="25",
(10,10) *+{a}="31",
(18,2) *+{b}="32",
(18,20) *+{2}="26",
(28,30) *+{1}="27",
(6,6)*+{ }="34",
(14,6)*+{ }="35",
\ar"21";"22", \ar"22";"23", \ar@/_0.6pc/@{.}"25";"24",
\ar"21";"31", \ar"31";"32",  \ar@/^0.6pc/@{.}"35";"34",
\ar"26";"27",
\end{xy}
\]
by identifying the root of $R''_1$ with the vertex $2$ and the source of the left border of $R''_1$ with $a$, and that by removing the vertex $3,4,5,b$ and taking $a$ as the root we obtain $R_2$. This implies that $R_2$ has to be the quiver $\overrightarrow{\mathbb{A}}_{n-4}$ with trivial relation and with the unique source as the root and $R''_1$ has to be the quiver $\overrightarrow{\mathbb{A}}_{n-5}$ with trivial relation and with the unique source as the root. Therefore $R$ is the first rooted quiver with relation in \eqref{QR:exceptional-case} and $R'$ is the second one.

\emph{Subcase (1.3):} $R'\in\cm_3(n)$. Then the isomorphism $\varphi\colon R\to R'$ has to take the vertex $2$ of $R$ to the vertex $2$ of $R'$, and hence take the vertex $1$ of $R$ to the vertex $1$ of $R'$ and the vertex $3$ of $R$ to the vertex $3$ of $R'$. Then it follows by induction that $R=R'$, a contradiction.

Case 2: $R\in\cm_2(n)$. By Lemma~\ref{lem:non-hereditary-mutated-QR-2}, we assume that $R=E(R_1,R_2)\in\cm_2(n)$ for some $R_1\in\cq^1(j-1)$, $R_2\in\cq(n-j-1)$, and at least one of $R_1$ and $R_2$ is non-hereditary, $2\leq j\leq n-2$. 

\emph{Subcase (2.1):} $R'\in\cm_1(n)$. By Lemma~\ref{lem:non-hereditary-mutated-QR-1}, we assume that $R'=E(R'_1)\in\cm_1(n)$ for some $R'_1\in \cq_{nh}(n-2)$. Let $a,b,c$ be the images of $1,2,3$ under the isomorphism $\psi\colon R'\to R$. Suppose that $a,b,c$ belong to $R_1$. By removing the vertices $a$ and $c$ and taking $b$ as the root we obtain $R'_1\in\cq(n-2)$. By the same argument as in the proof of Lemma~\ref{lem:non-hereditary-mutated-QR-2} (a), this implies that $R_2$ is hereditary, and that the rooted quiver with relation $R_3$ obtained from $R_1$ by removing the vertices $a$ and $c$ is also hereditary. It follows that $R_3$ has to be of the form $\overrightarrow{\mathbb{A}}_{m}$ for some $m$, but none of the vertices of this quiver can serve as $b$. Therefore $a,b,c$ cannot belong to $R_1$, and hence they have to belong to $R_2$. We deduce as above that $R_1$ has to be hereditary and the rooted quiver with relation $R_4$ obtained from $R_2$ by removing $a$ and $c$ also has to be hereditary. It follows that $R=G(j,1,R_4)$, and $R'=G(1,j,\bar{R}_4)$.

\emph{Subcase (2.2):} $R'\in\cm_2(n)$. By Lemma~\ref{lem:non-hereditary-mutated-QR-2}, we assume that $R'=E(R'_1,R'_2)\in\cm_2(n)$ for some $R'_1\in\cq^1(j'-1)$, $R_2\in\cq(n-j'-1)$, and at least one of $R'_1$ and $R'_2$ is non-hereditary, $2\leq j'\leq n-2$. If $R_1$ is non-hereditary, then $R$ has a full subquiver with relation of the form \eqref{QR:type-box}. Note that $R_2$ does not contribute to such a quiver with relation, so the lower part of \eqref{QR:type-box} has to be the full subquiver of $R$ consisting of the vertices $3$, $4$ and $5$, and this implies that the isomorphism $\varphi\colon R\to R'$ has to take the vertices $3$, $4$ and $5$ of $R$ to the vertices $3$, $4$ and $5$ of $R'$. It follows by induction that $R=R'$. Therefore $R_1$ has to be hereditary and hence as a quiver it has to be $\overrightarrow{\mathbb{A}}_{j-1}$. This means that the top part of $R$ is of the form $R(j)$, and its image under the isomorphism $\varphi\colon R\to R'$ lands in $R'_2$. Similarly, the top part of $R'$ is of the form $R(j')$ and its image under the isomorphism $\psi\colon R'\to R$ lands in $R_2$. Let $R_4$ (respectively, $R'_4$) be the rooted quiver with relation obtained from $R_2$ (respectively, $R'_2$) by removing the image of $R(j')$ (respecitively, $R(j)$). Then the isomorphism $\varphi\colon R\to R'$ induces an isomorphism $R_4\to R'_4$ of quivers with relation. This implies that we can find another vertex of $R_4$ as the root to make $R_4$ a rooted quiver with relation. Therefore $R_4,R'_4\in\cq_h(n-j-j'-4)$, $R'_2=\bar{R}_2$, $R=G(j,j',R_4)$ and $R'=G(j',j,R'_4)$. 

Case 3: $R,R'\in\cm_1(n)$. By Lemma~\ref{lem:non-hereditary-mutated-QR-1}, we assume that $R=E(R_1)$ and $R'=E(R'_1)$ for some $R_1,~R'_1\in\cq_{nh}(n-2)$. If the isomorphism $R\to R'$ of quivers with relation perserves the vertices $1$, $2$ and $3$, then it follows by induction that $R=R'$ and in particular $R_1=R'_1$. Thus we assume that $R_1\neq R'_1$. Let $a,b,c$ be the images of $1,2,3$ under the isomorphism $\varphi$ (respectively, $\psi$), and let $R_2$ (respectively, $R'_2$) be the rooted quiver with relation obtained from $R_1$ (respectively, $R'_1$) by removing the vertices $a$ and $c$. Then the isomorphism $\varphi\colon R\to R'$ induces an isomorphism $R_2\to R'_2$ of quivers with relation. This implies that we can find another vertex of $R_2$ as the root to make $R_2$ a rooted quiver with relation. Therefore $R_2,R'_2\in\cq_h(n-4)$, $R'_2=\bar{R}_2$, $R=G(1,1,R_2)$ and $R'=G(1,1,R'_2)$.

\subsubsection{The cardinality of $\cb_3$}
We have
\begin{align*}
\cb_3&=\bigcup_{m=1}^{n-1}(\ca_t^6(\Lambda_{m})\times_s \ca_t(\Lambda_{n-m}))\\
&=\bigsqcup_{m=1}^{[\frac{n}{2}]}(\ca_t^6(\Lambda_m)\times_s \ca_t(\Lambda_{n-m}))\cup (\ca_t^6(\Lambda_{n-m})\times_s \ca_t(\Lambda_{m}))
\end{align*}
For $1\leq m\leq [\frac{n}{2}]$, put
\[
\cb_{3,m}=
(\ca_t^6(\Lambda_m)\times_s \ca_t(\Lambda_{n-m}))\cup (\ca_t^6(\Lambda_{n-m})\times_s \ca_t(\Lambda_{m})) ~\text{and}~
b_{3,m}=|\cb_{3,m}|.
\]
Then
\[
\cb_3=\bigsqcup_{m=1}^{[\frac{n}{2}]}\cb_{3,m}~\text{and}~
|\cb_3|=\sum_{m=1}^{[\frac{n}{2}]} b_{3,m}.
\]

If $m=1,2,3$ and $m<\frac{n}{2}$, then $\cb_{3,m}=\ca_t(\Lambda_{m})\times_s\ca_t(\Lambda_{n-m})=\ca_t(\Lambda_{m})\times\ca_t(\Lambda_{n-m})$ because $\ca_t^6(\Lambda_m)=\ca_t(\Lambda_m)$, so
\begin{align*}
b_{3,m}&=|\cb_{3,m}|=a_t(\Lambda_m)a_t(\Lambda_{n-m}).
\end{align*}
If $4\leq m<\frac{n}{2}$, then it follows from Lemma~\ref{lem:symmetric-product} that
\begin{align*}
b_{3,m}&=|\cb_{3,m}|=|(\ca_t^6(\Lambda_m)\times_s \ca_t(\Lambda_{n-m}))\cup (\ca_t^6(\Lambda_{n-m})\times_s \ca_t(\Lambda_{m}))|\\
&=a_t^6(\Lambda_m)a_t(\Lambda_{n-m})+a_t^6(\Lambda_{n-m})a_t(\Lambda_m)-a_t^6(\Lambda_m)a_t^6(\Lambda_{n-m}).
\end{align*}
If $n$ is even and $m=\frac{n}{2}$, then $\cb_{3,\frac{n}{2}}=\ca_t^6(\Lambda_{\frac{n}{2}})\times_s \ca_t(\Lambda_{\frac{n}{2}})$ and it follows from Lemma~\ref{lem:symmetric-product} that
\begin{align*}
b_{3,\frac{n}{2}}&=|\cb_{3,\frac{n}{2}}|
=a_t^6(\Lambda_{\frac{n}{2}})a_t(\Lambda_{\frac{n}{2}})-\frac{a_t^6(\Lambda_{\frac{n}{2}})(a_t^6(\Lambda_{\frac{n}{2}})-1)}{2}.
\end{align*}

\subsubsection{The cardinality of $\cb_5$} We have
\begin{align*}
  \cb_5  &=\bigcup_{m=1}^{n-2}(\ca_t^1(\Lambda_{m})\times_s \ca_t(\Lambda_{n-m-1}))\times \ca_t(\Lambda_1)\\
  &=\bigsqcup_{m=1}^{[\frac{n-1}{2}]}(\ca_t^1(\Lambda_{m})\times_s \ca_t(\Lambda_{n-m-1}))\cup(\ca_t^1(\Lambda_{n-m-1})\times_s \ca_t(\Lambda_{m})))\times\ca_t(\Lambda_1).
\end{align*}
Put
\[
\cb_{5,m}=(\ca_t^1(\Lambda_{m})\times_s \ca_t(\Lambda_{n-m-1}))\cup(\ca_t^1(\Lambda_{n-m-1})\times_s\ca_t(\Lambda_m)) \text{ and } b_{5,m}=|\cb_{5,m}|.
\]
Then
\begin{align*}
\cb_5&=\bigsqcup_{m=1}^{[\frac{n-1}{2}]}\cb_{5,m}\times\ca_t(\Lambda_1),\\
|\cb_5|&=\sum_{m=1}^{[\frac{n-1}{2}]} b_{5,m}.
\end{align*}

If $m=1$, then $\cb_{5,1}=\ca_t(\Lambda_{n-2})$ and $b_{5,1}=a_t(\Lambda_{n-2})$. This can be merged into the next case if we set $t(\Lambda_0)=1$.  
If $1<m< \frac{n-1}{2}$, then it follows from Lemma~\ref{lem:symmetric-product} and Lemma~\ref{lem:tilting-modules-and-tilted-algebras-with-P(n)} that
\begin{align*}
b_{5,m}&=a_t^1(\Lambda_m)a_t(\Lambda_{n-m-1})+a_t^1(\Lambda_{n-m-1})a_t(\Lambda_m)-a_t^1(\Lambda_m) a_t^1(\Lambda_{n-m-1})\\
&=t(\Lambda_{m-1})a_t(\Lambda_{n-m-1})+t(\Lambda_{n-m-2})a_t(\Lambda_m)-t(\Lambda_{m-1})t(\Lambda_{n-m-2}).
\end{align*}
If $n$ is odd and $m=\frac{n-1}{2}$, then it follows from Lemma~\ref{lem:symmetric-product} and Lemma~\ref{lem:tilting-modules-and-tilted-algebras-with-P(n)} that 
\begin{align*}
b_{5,\frac{n-1}{2}}
&=
a_t^1(\Lambda_{\frac{n-1}{2}})a_{t}(\Lambda_{\frac{n-1}{2}})-\frac{a_t^1(\Lambda_{\frac{n-1}{2}})\times(a_t^1(\Lambda_{\frac{n-1}{2}})-1)}{2}\\
&=
t(\Lambda_{\frac{n-3}{2}})a_{t}(\Lambda_{\frac{n-1}{2}})-\frac{t(\Lambda_{\frac{n-3}{2}})\times(t(\Lambda_{\frac{n-3}{2}})-1)}{2}.
\end{align*}

\section{No strictly shod algebras in type $\mathbb{A}$}
\label{s:no-strictly-shod-algebra-of-type-A}

As we have observed, as a consequence of Theorems~\ref{thm:the-number-of-silted-algebras-of-Lambda_n} and~\ref{thm:silted-algebras-of-Gamman}, that among the silted algebras of type $\Lambda_n$ and of type $\Gamma_n$ there are no strictly shod algebras. During the preparation of this paper, the third author and his collaborator showed in \cite{ZhangLiu22}, using geometric models, that there are no strictly shod algebras among silted algebra of type the path algebra of any quiver of type $\mathbb{A}$. Here we give an alternative proof. Assume that $k$ is algebraically closed.

\begin{theorem}[{\cite[Corollary 1.5]{ZhangLiu22}}]
\label{thm:no-strictly-shod-algebra-of-type-A}
Let $A$ be the path algebra of any fixed quiver of type $\mathbb{A}_n$. Then any silted algebra of type $A$ is a tilted algebra of type $\mathbb{A}_{m_1}\times\cdots\times\mathbb{A}_{m_r}$, where $m_1,\ldots,m_r$ are positive integers such that $m_1+\ldots+m_r=n$.
\end{theorem}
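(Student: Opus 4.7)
The plan is to prove the theorem by induction on $n$. For $n=1,2$ the assertion is immediate. For the inductive step, given $A=kQ$ with $Q$ of type $\mathbb{A}_n$ and a basic 2-term silting complex $M$ over $A$, I would show that the indecomposable summands of $M$ partition into subsets whose direct sums $M_1,\ldots,M_r$ satisfy
\begin{itemize}
\item[(i)] $\Hom_{K^b(\proj A)}(M_i,M_j[\ell])=0$ for $i\neq j$ and $\ell=0,1$, and
\item[(ii)] each $M_i$ is a 2-term tilting complex inside its thick closure $\thick(M_i)\simeq K^b(\proj A_i)$, where $A_i$ is a connected hereditary algebra of type $\mathbb{A}_{m_i}$ with $m_1+\cdots+m_r=n$.
\end{itemize}
By (i), $\End_{K^b(\proj A)}(M)=\prod_{i=1}^r \End(M_i)$, and by (ii) combined with Corollary~\ref{cor:endomorphism-algebra-of-2-term-tilting-is-tilted}, each $\End(M_i)$ is tilted of type $A'_i$ for some hereditary $A'_i$ derived equivalent to $A_i$. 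Since derived equivalent connected Dynkin hereditary algebras share the same underlying graph, $A'_i$ is again of type $\mathbb{A}_{m_i}$, which yields the conclusion.

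To produce such a partition I would exploit the AR-quiver of $K^b(\proj A)$, which is $\mathbb{Z}Q$, with the indecomposables of $K^{[-1,0]}(\proj A)$ forming an explicit slice (of shape depending on the orientation of $Q$), exactly as in Figure~\ref{fig:the-AR-quiver-of-2-term-complexes-over-Lambdan} and Figure~\ref{fig:AR-quiver-Gamman}. Mimicking the analysis of Proposition~\ref{prop:2-term-silting-complexes-of-Lambda_n}, one selects an indecomposable summand $N$ of $M$ sitting at an extremal position (for instance, the summand whose underlying indecomposable lies highest in $\mathbb{Z}Q$ among the summands of $M$), and considers its wing $\mathcal{W}_N$ in the sense of Ringel. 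A Hom-vanishing computation, analogous to Lemma~\ref{lem:derived-Hom-and-hammock} applied to the hammocks cut out by $N$, forces every summand of $M$ that can pair nontrivially (in degree $0$ or $1$) with a summand outside $\mathcal{W}_N$ to lie inside $\mathcal{W}_N$. The summands inside $\mathcal{W}_N$ must then, by a count of indecomposable summands together with the presilting condition, form a 2-term tilting complex in $\thick(\mathcal{W}_N)$, which is triangle equivalent to $K^b(\proj B_0)$ for a hereditary $B_0$ of type $\mathbb{A}_{m_0}$. The complement summands of $M$ form a 2-term silting complex over a hereditary algebra of type $\mathbb{A}_{n-m_0}$ (possibly disconnected, but then split further into its connected components), to which the induction hypothesis applies.

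The main obstacle is the orientation-independent execution of this wing decomposition. For $\Lambda_n$ and $\Gamma_n$ the paper carries out very explicit decompositions, but for a general orientation of $\mathbb{A}_n$ one must either (a) handle each oriented shape by a similar but uniform argument, choosing the pivot summand $N$ consistently from the geometry of the slice inside $\mathbb{Z}Q$, or (b) use BGP reflection functors to transport the conclusion from the linearly oriented case, taking care that although a BGP reflection does not send 2-term silting complexes to 2-term silting complexes, it does preserve endomorphism algebras and the property of being a product of tilted algebras of types $\mathbb{A}_{m_i}$. Option (a) seems cleaner: the key lemma to establish is that in any orientation of $\mathbb{A}_n$, for any basic 2-term silting complex $M$ over $A$, there exists an indecomposable summand whose wing is a ``separating'' full translation subquiver in the above sense. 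Once that lemma is in hand, the induction closes and no strictly shod algebra can appear.
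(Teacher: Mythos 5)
Your outline has the right skeleton (split $M$ according to the blocks of $\End(M)$, identify each $\thick(M_i)$ with $K^b(\proj A_i)$ for a connected hereditary $A_i$ of type $\mathbb{A}_{m_i}$, show each block is in fact \emph{tilting} there, then apply Corollary~\ref{cor:endomorphism-algebra-of-2-term-tilting-is-tilted}), but as written it is not a proof: everything hinges on the ``separating wing'' lemma that you yourself flag as the main obstacle and never establish. Two genuinely nontrivial claims are buried in your items (i)--(ii). First, that $\thick(M_i)\simeq K^b(\proj A_i)$ with $A_i$ connected of type $\mathbb{A}_{m_i}$ and $\sum m_i=n$; this is exactly the content of the paper's Lemma~\ref{lem:reduction}, which is proved not by wing combinatorics but by writing $N$ as an exceptional sequence (Aihara--Iyama), identifying $\thick(N)$ with the derived category of a finitely generated wide subcategory (Krause, Br\"uning), and producing a projective generator via Ingalls--Thomas. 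Second, that each block is \emph{tilting} and not merely silting in its thick closure: your ``count of indecomposable summands together with the presilting condition'' only gives silting, and the obstruction $\Hom(M_i,M_i[-1])\cong\Hom_A(H^0(M_i),P_i)$ (with $P_i[1]$ the shifted projective part) does not vanish for free. In the paper this is precisely van Roosmalen's observation (connected endomorphism algebra of a 2-term silting complex over type $\mathbb{A}$ forces tilting), used as a quoted black box; you give no substitute for it, and the wing/hammock analysis of Proposition~\ref{prop:2-term-silting-complexes-of-Lambda_n} that you want to mimic depends heavily on the linear orientation (already for $\Gamma_n$ the paper needed the auxiliary object $X$, the operator $(-)^*$ and three separate families).

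Your fallback option (b) also does not work as described: a BGP reflection functor does not preserve the class of 2-term silting complexes, so the silted algebras of two orientations are genuinely different sets (compare the tables: $a_s(\Lambda_n)\neq a_s(\Gamma_n)$), and one cannot transport the statement from $\Lambda_n$ to an arbitrary orientation merely because reflection functors preserve endomorphism algebras. So the honest status of your proposal is: correct strategy in outline, different in method from the paper (orientation-by-orientation AR-quiver combinatorics and induction, versus the paper's orientation-free reduction Lemma~\ref{lem:reduction} plus the connected-implies-tilting observation), but with the two key steps --- the separating-wing lemma and the tilting property of each block --- left unproved, which is exactly the part of the argument that requires real work.
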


We first establish the following result.

\begin{lemma}
\label{lem:reduction}
Let $Q$ be any finite quiver without oriented cycles, and $N$ be a 2-term presilting complex in $K^b(\proj kQ)$. Then there exists a finite quiver $Q'$ together with a triangle equivalence $\thick(N)\to K^b(\proj kQ')$ which takes $N$ to a 2-term silting complex of $K^b(\proj kQ')$. If $Q$ is of type $\mathbb{A}$ and $\thick(N)$ is connected, so is $Q'$.
\end{lemma}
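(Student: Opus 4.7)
The plan is to proceed by induction on $d := |kQ| - |N|$, the number of summands missing from $N$ relative to a full silting complex.

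For the base case $d=0$: since $N$ is 2-term presilting with $|N|=|kQ|$, $N$ is already a 2-term silting complex in $K^b(\proj kQ)$ by \cite[Proposition 3.14]{BruestleYang13}. Hence $\thick(N)=K^b(\proj kQ)$ and we take $Q'=Q$.

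For the inductive step $d>0$, I would first complete $N$ to a 2-term silting complex $\tilde{N}=N\oplus L$ in $K^b(\proj kQ)$ (possible by Theorem~\ref{thm:correspondence-between-silting-and-support-tau-tilting} together with Bongartz-type completion of $\tau$-rigid pairs to support $\tau$-tilting pairs, cf. \cite{AdachiIyamaReiten14}). The strategy is then to produce a triangle equivalence $\Phi\colon K^b(\proj kQ)\xrightarrow{\simeq} K^b(\proj kQ_0)$ for some hereditary algebra $kQ_0$ derived equivalent to $kQ$, under which $\tilde{N}$ is transformed into $kQ_0$ itself (viewed as the canonical 2-term silting complex in degree $0$). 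Under such an equivalence the summand $N$ becomes $\Phi(N)\cong e\cdot kQ_0$ for some idempotent $e\in kQ_0$ that is a sum of primitive idempotents $e_i$ attached to a subset $I$ of vertices of $Q_0$. Then
\[
\thick(N)\simeq \thick(e\cdot kQ_0)\simeq K^b(\proj (ekQ_0 e))=K^b(\proj kQ'),
\]
where $Q'$ is the full subquiver of $Q_0$ on the vertices of $I$; in particular $Q'$ has no oriented cycles, and $N$ corresponds to a 2-term silting complex of $K^b(\proj kQ')$ because the number of its indecomposable summands equals $|I|=|kQ'|$ and the presilting condition is inherited.

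The main obstacle is constructing $\Phi$, i.e., showing that any 2-term silting complex over a hereditary algebra can be brought to the canonical form via a derived equivalence. This is where the hereditary hypothesis is essential. I would invoke the silting-mutation theory of \cite{AiharaIyama12}: the 2-term silting complexes over $kQ$ are connected under silting mutation, and each mutation can be realised (in the hereditary case) by a BGP reflection functor of \cite{BernsteinGelfandPonomarev73} at a sink/source, changing $Q$ to a reflection-equivalent quiver. Composing the inverses of the sequence of reflections carrying $kQ$ to $\tilde N$ yields the desired $\Phi$ to $K^b(\proj kQ_0)$ for an appropriate $Q_0$. Finally, for the type $\mathbb{A}$ addendum, any reflection of a type-$\mathbb{A}$ quiver is again of type~$\mathbb{A}$, so all subquivers are disjoint unions of type-$\mathbb{A}$ quivers; assuming $\thick(N)$ connected forces $Q'$ (the connected decomposition of which matches the block decomposition of $ekQ_0e$) to be connected, and hence of type~$\mathbb{A}$.
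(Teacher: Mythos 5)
There is a genuine gap at the heart of your argument: the equivalence $\Phi\colon K^b(\proj kQ)\to K^b(\proj kQ_0)$ taking $\tilde N$ to the stalk silting object $kQ_0$ does not exist in general. Any triangle equivalence preserves endomorphism algebras, so the existence of such a $\Phi$ would force $\End_{K^b(\proj kQ)}(\tilde N)\cong kQ_0$ to be hereditary. But most 2-term silting complexes over a hereditary algebra have non-hereditary endomorphism algebras --- this is exactly the phenomenon the whole paper is about: already over $\Lambda_3$ there is a tilting module whose endomorphism algebra is the $\mathbb{A}_3$ quiver with a zero relation. The justification you offer for $\Phi$ is also not correct: BGP reflection functors only realise the mutations of the canonical silting object $kQ$ at sinks/sources of $Q$ (APR tilts); a general sequence of silting mutations from $kQ$ to $\tilde N$ passes through silting objects whose endomorphism algebras are not path algebras, so these mutations are not induced by reflections, and the images of $kQ_0$ under derived equivalences between path algebras are precisely the silting objects with \emph{hereditary} endomorphism algebra, not all 2-term silting objects. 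A smaller point: even where your scheme applies, $e\,kQ_0\,e$ is not the path algebra of the full subquiver of $Q_0$ on $I$ (paths running through deleted vertices survive as new arrows), although it is still the path algebra of an acyclic quiver; and the induction on $d=|kQ|-|N|$ is never actually used, since the ``inductive step'' does not reduce to a smaller case. The completion of $N$ to a 2-term silting complex via Bongartz-type completion is fine, but it does not help once $\Phi$ is unavailable.

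For comparison, the paper avoids any normalisation of the silting object: it completes the summands of $N$ to an exceptional sequence (Aihara--Iyama), identifies $\thick(N)$ with $\cd^b_{\ca}(\mod kQ)$ for the wide subcategory $\ca=\{H^0X\mid X\in\thick(N)\}$ (Krause, Br\"uning), and uses Ingalls--Thomas to produce a projective generator of $\ca$ with hereditary endomorphism algebra $kQ'$; writing $N=N'\oplus P_N[1]$ then exhibits the image of $N$ as a 2-term silting complex over $kQ'$. If you want to salvage your approach, you would need a replacement for $\Phi$ along these lines (a perpendicular-category or wide-subcategory argument), rather than a derived equivalence moving $\tilde N$ to the free module.
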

\begin{proof}
We may assume that $N$ is basic. By \cite[Lemma 3.10]{AiharaIyama12}, we can write $N=N_1\oplus\ldots\oplus N_r$ such that $(N_1,\ldots,N_r)$ is an exceptional sequence in $K^b(\proj kQ)$. Let $\ca=\{H^0X\mid X\in\thick(N)\}$.
Then by \cite[Proposition 6.6]{Krause12} and \cite[Theorem 5.1]{Bruening07} $\ca$ is a wide subcategory of $\mod kQ$ and $\thick(N)=\cd^b_{\ca}(\mod kQ)$, the full subcategory of $\cd^b(\mod kQ)$ consising of complexes whose cohomologies belong to $\ca$.  Moreover, $\ca$ has finitely many isoclasses of simple objects and the Loewy lengths of objects in $\ca$ have a uniform upper bound, and hence it has a projective generator and is finitely generated. By \cite[Corollary 2.21 and Corollary 2.22]{IngallsThomas09}, $\ca$ has a projective generator $P_\ca$ such that $\End(P_\ca)\cong kQ'$ for some finite quiver $Q'$. Since both $\mod kQ$ and $\ca$ are hereditary, it follows that the canonical triangle functor $\cd^b(\ca)\to\cd^b_\ca(\mod kQ)$ is a triangle equivalence. Therefore we obtain a chain of triangle equivalences
\[
\thick(N)=\cd^b_\ca(\mod kQ)\rightarrow\cd^b(\ca)\rightarrow K^b(\proj kQ').
\]
Write $N=N'\oplus P_N[1]$ with $N'\in\mod kQ$ and $P_N\in\proj kQ$. Then both $N'$ and $P_N$ belong to $\ca$ and $P_N$ is projective also in $\ca$. It follows that the image of $N$ under the above composed equivalence is a 2-term silting complex in $K^b(\proj kQ')$.

Now assume that $Q$ is of type $\mathbb{A}$ and $\thick(N)$ is connected. Then $Q'$ is connected, and the Hom-space between two indecomposable $kQ'$-modules is either $k$ or $0$. Then $Q'$ is of type $\mathbb{A}$.
\end{proof}

\begin{proof}[Proof of Theorem~\ref{thm:no-strictly-shod-algebra-of-type-A}]
Let $M$ be a 2-term silting complex over $A$. There are two cases.

Case 1: $\End(M)$ is connected. As Adam-Christiaan van Roosmalen observed (see for example \cite[Section 5.1]{Yang20}), this implies that $M$ is a tilting complex. Thus by Corollary~\ref{cor:endomorphism-algebra-of-2-term-tilting-is-tilted}, $\End(M)$ is a tilted algebra of type $\mathbb{A}_n$.

Case 2: $\End(M)$ is not connected. Let $N$ be any direct summand of $M$ such that $\End(N)$ is an indecomposable direct summand of $\End(M)$. Then by Lemma~\ref{lem:reduction} there is a finite quiver $Q'$ of type $\mathbb{A}_m$ for $m=|N|$ and a 2-term silting complex in $K^b(\proj kQ')$ whose endomorphism algebra is isomorphic to $\End(N)$. Therefore by the conclusion in Case 1 applied to this 2-term silting complex in $K^b(\proj kQ')$, we obtain that $\End(N)$ is a tilted algebra of type $\mathbb{A}_m$. This completes the proof.
\end{proof}


\begin{thebibliography}{10}

\bibitem{AdachiIyamaReiten14}
Takahide Adachi, Osamu Iyama, and Idun Reiten, \emph{{$\tau$}-tilting theory},
  Compos. Math. \textbf{150} (2014), no.~3, 415--452.

\bibitem{AiharaIyama12}
Takuma Aihara and Osamu Iyama, \emph{Silting mutation in triangulated
  categories}, J. Lond. Math. Soc. (2) \textbf{85} (2012), no.~3, 633--668.

\bibitem{AssemHappel81}
Ibrahim Assem and Dieter Happel, \emph{Generalized tilted algebras of type $\mathbb{A}_n$}, Comm. Alg. \textbf{9} (1981), 2101--2125.

\bibitem{AssemSimsonSkowronski06}
Ibrahim Assem, Daniel Simson, and Andrzej Skowro{\'n}ski, \emph{Elements of the
  representation theory of associative algebras. {V}ol. 1}, London Mathematical
  Society Student Texts, vol.~65, Cambridge University Press, Cambridge, 2006,
  Techniques of representation theory.

\bibitem{BeilinsonBernsteinDeligne82}
Alexander~A. Beilinson, Joseph Bernstein, and Pierre Deligne, \emph{Faisceaux
  pervers}, Ast{\'e}risque, vol. 100, Soc. Math. France, 1982 (French).

\bibitem{BernsteinGelfandPonomarev73}
I.~N. Bern\v{s}te\u{\i}n, I.~M. Gel{\cprime}fand, and V.~A. Ponomarev,
  \emph{Coxeter functors, and {G}abriel's theorem}, Uspehi Mat. Nauk
  \textbf{28} (1973), no.~2(170), 19--33.

\bibitem{Bruening07}
Kristian Br\"uning, \emph{Thick subcategories of the derived category of a hereditary algebra}, Homology, Homotopy and Applications \text{9} (2007), no.~2, 165--176.

\bibitem{BruestleYang13} 
Thomas Br\"ustle and Dong Yang, \emph{Ordered exchange graphs}, Advances in representation theory of algebras,
EMS Ser. Congr. Rep., European Mathematical Society (EMS), Z\"urich, 2013, 135--193.

\bibitem{BuanZhou16}
Aslak~Bakke Buan and Yu~Zhou, \emph{Silted algebras}, Adv. Math. \textbf{303}
  (2016), 859--887.
  
\bibitem{BuanZhou16b}
Aslak~Bakke Buan and Yu~Zhou, \emph{A silting theorem}, J. Pure Appl. Algebra \textbf{220}
  (2016), 2748--2770.
  
  
\bibitem{BuanZhou18}
Aslak~Bakke Buan and Yu~Zhou, \emph{Endomorphism algebras of 2-term silting complexes}, Algebr. Represent. Theory \textbf{21} (2018), no.~1, 181--194.

\bibitem{ChenHanZhou19}
Xiao-Wu Chen, Zhe Han, and Yu~Zhou, \emph{Derived equivalences via
  {HRS}-tilting}, Adv. Math. \textbf{354} (2019), 106749. 26.


\bibitem{GabrielRoiter92}
Peter Gabriel and A. V. Roiter, Representations of finite-dimensional algebras. With a chapter
by B. Keller. Encyclopaedia Math. Sci., 73, Algebra, VIII, 1--177, Springer, Berlin, 1992.

\bibitem{Happel88}
Dieter Happel, Triangulated categories in the representation theory of finite-dimensional algebras.
London Math. Soc. Lecture Note Ser. \textbf{119},
Cambridge University Press, Cambridge, 1988. x+208 pp.

\bibitem{HappelRingel81}
Dieter Happel and Claus~Michael Ringel, \emph{Construction of tilted algebras},
  Representations of algebras ({P}uebla, 1980), Lecture Notes in Math., vol.
  903, Springer, Berlin, 1981, pp.~125--144.

\bibitem{HappelRingel82}
\bysame, \emph{Tilted algebras}, Trans. Amer. Math. Soc. \textbf{274} (1982),
  no.~2, 399--443.
  
\bibitem{IngallsThomas09}
Colin Ingalls and Hugh Thomas, \emph{Noncrossing partitions and representations of quivers}, Comp. Math. \text{145} (2009), 1533--1562.

\bibitem{IyamaYang18}
Osamu Iyama and Dong Yang, \emph{Silting reduction and {C}alabi-{Y}au reduction
  of triangulated categories}, Trans. Amer. Math. Soc. \textbf{370} (2018),
  no.~11, 7861--7898.

\bibitem{KellerVossieck88}
Bernhard Keller and Dieter Vossieck, \emph{Aisles in derived categories}, Bull.
  Soc. Math. Belg. S{\'e}r. A \textbf{40} (1988), no.~2, 239--253.
  
\bibitem{Krause12}
Henning Krause, \emph{Report on locally finite triangulated categories}, J. K-Theory  \textbf{9} (2012), no. 3, 421--458.

\bibitem{NakaokaPalu19}
Hiroyuki Nakaoka and Yann Palu, \emph{Extriangulated categories, {H}ovey twin
  cotorsion pairs and model structures}, Cah. Topol. G\'{e}om. Diff\'{e}r.
  Cat\'{e}g. \textbf{60} (2019), no.~2, 117--193.

\bibitem{ObaidNaumanFakiehRingel14}
Mustafa A.~A. Obaid, S.~Khalid Nauman, Wafaa~M. Fakieh, and Claus~Michael
  Ringel, \emph{The number of support-tilting modules for a {D}ynkin algebra},
  J. Integer Seq. \textbf{18} (2015), no.~10, Article 15.10.6, 24.

\bibitem{PsaroudakisVitoria18}
Chrysostomos Psaroudakis and Jorge Vit\'{o}ria, \emph{Realisation functors in
  tilting theory}, Math. Z. \textbf{288} (2018), no.~3-4, 965--1028.

\bibitem{Ringel84}
Claus Michael Ringel, \emph{Tame algebras and integral quadratic forms}, Lecture Notes
  in Mathematics, vol. 1099, Springer Verlag, 1984.
  
\bibitem{RingelVossieck87}
Claus Michael Ringel and Dieter Vossieck, \emph{Hammocks}, Proc. London Math. Soc. (3) \textbf{54} (1987), 216--246.

\bibitem{XieYangZhang23}
Zongzhen Xie, Dong Yang and Houjun Zhang, \emph{A differential approach to the silting theorem}, J. Pure Appl. Algebra \textbf{227} (2023), 107180.

\bibitem{Xing24}
Ruoyun Xing, \emph{Some examples of silted algebras of Dynkin type}, Algebra Colloquium \textbf{31} (2024), no. 4, 575--594.

\bibitem{Yang20}
Dong Yang, \emph{Some examples of {$t$}-structures for finite-dimensional algebras}, J. Algebra \textbf{560} (2020), 17--47.

\bibitem{Zhang24}
Houjun Zhang, \emph{Classification of silted algebras for two quivers of Dynkin type $\mathbb{D}_{n}$}, work in preparation.

\bibitem{ZhangLiu22}
Houjun Zhang and Yu-Zhe Liu, \emph{There are no strictly shod algebras in
  hereditary gentle algebras}, Comm. Alg. \textbf{53} (2025), no. 3, 1062--1075.

\end{thebibliography}

\def\cprime{$'$}
\providecommand{\bysame}{\leavevmode\hbox to3em{\hrulefill}\thinspace}
\providecommand{\MR}{\relax\ifhmode\unskip\space\fi MR }
\providecommand{\MRhref}[2]{%
  \href{http://www.ams.org/mathscinet-getitem?mr=#1}{#2}
}
\providecommand{\href}[2]{#2}

\end{document}